\documentclass[reqno,twoside,11pt]{amsart}

\usepackage{amsmath,amsfonts,calrsfs,fullpage,amssymb,xcolor,verbatim,eucal,yfonts,mathrsfs}

\usepackage{mathtools}
\mathtoolsset{showonlyrefs}

 \usepackage{xcolor}
\newtheorem{Theorem}{Theorem}[section]

\newtheorem{Proposition}[Theorem]{Proposition}
\newtheorem{Lemma}[Theorem]{Lemma}
\newtheorem{Corollary}[Theorem]{Corollary}
\newtheorem{Remark}[Theorem]{Remark}
\newtheorem{claim}[Theorem]{Claim}

\newtheorem{Hypothesis}{Hypothesis}

\makeatletter
\@addtoreset{equation}{section}

\makeatother

\def\a{\alpha}

\def\la{\lambda}

\def\R{\mathbb R}

\def\z{\mathbb z}

\def\ds{\displaystyle}

\def\e{\epsilon}

\title{The inertial It\^{o} drift and its applications to particle collision}\date{}

\author[S. Cerrai]{Sandra Cerrai}
\address{Department of Mathematics\\
university of Maryland\\
}
\email{cerrai@umd.edu}
\thanks{S. Cerrai was partiallly supported by NSF Grant DMS-2348096 (2024-2027), {\em 
 Multiscale Analysis of Infinite-Dimensional Stochastic Systems}}
\author[F. Flandoli]{Franco Flandoli}
\address{Scuola Normale Superiore\\
}
\email{franco.flandoli@sns.it}
\thanks{F. Flandoli was partially supported by ERC {\em AdG NoisyFluid n.~101053472}}

\author[M. xie]{Mengzi xie}
\address{Institut f\"{u}r Mathematik\\
Technische universit\"{a}t Berlin\\
}
\email{xie@tu-berlin.de}

\subjclass[2010]{}

\keywords{}

\begin{document}

 \begin{abstract} 
 
 The small mass $\mu$ limit of an inertial system driven by an Ornstein
Uhlenbeck fluid force, with correlation time $\epsilon$ going to zero, leads
to a first order system with an additional drift, which we call
inertial-It\^{o}-drift, depending on the limit $\alpha$ of the ratio
$\mu/\epsilon$; the drift being zero when $\alpha=0$, corresponding to the
Stratonovich integral in the limit equation, as in the Wong-Zakai theory when
applied directly to the first order system with Ornstein Uhlenbeck driver. We
discuss the application of this result to particles driven by Stokes
force;\ we identify inertial centrifugal effects and the so-called
turbophoretic effect, as examples of the inertial-It\^{o}-drift. We also
analyze concentration effects and their link with the theory of particle
collision in turbulent fluids.

 \end{abstract}

 \maketitle
 
 \tableofcontents
 
 \section{Introduction}
 
The problem of identifying the limiting equation of motion for an inertial particle subject to a fast-decorrelating random forcing is relevant in both the physical and the mathematical literature, and lies at the crossroad between the theory of small-mass (or Smoluchowski-Kramers) approximations and the Wong-Zakai theory of smooth approximations of white noise. The two limiting procedures involved, letting the mass $\mu$ of the particle tend to zero and letting the correlation time $\epsilon$ of the noise tend to zero, do not commute, and, depending on the relative magnitude of $\mu$ and $\epsilon$, the limiting equation features qualitatively different drift corrections. The aim of the present paper is to give a complete and rigorous description of this small-mass/short-correlation-time limit for a system with arbitrary state-dependent friction and noise coefficients in $\R^d$, driven by a multi-dimensional Ornstein-Uhlenbeck process, and for {\em all} possible values of the asymptotic ratio $\alpha:=\lim_{\epsilon\to 0}\mu(\epsilon)/\epsilon\in[0,+\infty]$,  and to discuss the application of these results to some physical settings.

\medskip

\noindent {\em The model.} For every $\mu,\epsilon>0$, we consider  the second-order stochastic system in $\R^d$
\begin{equation}
\label{intro-system}
\left\{\begin{array}{l}
\ds{\frac{dx}{dt}(t)=v(t),\quad x(0)=x_0,}\\[8pt]
\ds{\mu\,\frac{dv}{dt}(t)=b(x(t))-\gamma(x(t))\,v(t)+\sigma(x(t))\,z_\epsilon(t),\quad v(0)=v_0,}
\end{array}\right.
\end{equation}
where the driving process $z_\epsilon(t)$ is the rescaled Ornstein-Uhlenbeck process
\[
z_\epsilon(t)=\frac{1}{\epsilon}\int_0^t e^{-A(t-s)/\epsilon}\,B\,dw(s),
\]
solving
\[
\epsilon\,dz_\epsilon(t)=-A\,z_\epsilon(t)\,dt+B\,dw(t),\qquad z_\epsilon(0)=0,
\]
for some matrices $A\in\R^{n\times n}$, with eigenvalues having strictly positive real part, and $B\in\R^{n\times m}$, and a standard $m$-dimensional Brownian motion $w(t)$. The drift $b:\R^d\to\R^d$, the friction matrix $\gamma:\R^d\to\R^{d\times d}$, assumed uniformly positive, and the noise coefficient $\sigma:\R^d\to\R^{d\times n}$ are smooth and bounded with bounded derivatives. System \eqref{intro-system} contains two small parameters, the mass $\mu$ of the particle, which controls the inertial relaxation time, and the correlation time $\epsilon$ of the colored noise. We let both parameters go to zero simultaneously, with $\mu=\mu(\epsilon)\to 0$, as $\epsilon\to 0$, and we are interested in the limiting behavior of the slow component $x_\epsilon(t)=x_{\epsilon,\mu(\epsilon)}(t)$.

\medskip

\noindent {\em The two extreme regimes and their non-commutativity.} The non-commutativity of the two limits $\mu\to 0$ and $\epsilon\to 0$ is well known and may be illustrated already at the heuristic level. If one first sends $\epsilon\to 0$ while keeping $\mu>0$ fixed, the colored process $z_\epsilon$ tends to a white noise and, since the second-order equation produces a position  component $x$ of bounded variation (in  fact, $C^1$ in time), the resulting stochastic integral is unambiguous, and one obtains
\[
\mu\,dv(t)=(b(x(t))-\gamma(x(t))v(t))\,dt+\sigma(x(t))\,A^{-1}B\,dw(t).
\]
Subsequently, sending $\mu\to 0$ gives, after a non-trivial small-mass analysis, a first-order It\^o equation for the position with a noise-induced drift. If, instead, one first sends $\mu\to 0$ for fixed $\epsilon>0$, the velocity is slaved to the colored forcing, $v=\gamma^{-1}(b+\sigma z_\epsilon)$, so that
\[
\frac{dx(t)}{dt}=\gamma^{-1}(x(t))b(x(t))+\gamma^{-1}(x(t))\sigma(x(t))\,z_\epsilon(t),
\]
and the subsequent limit $\epsilon\to 0$ falls within the classical Wong-Zakai framework, leading to a Stratonovich equation. The two procedures yield fundamentally different stochastic differentials, and an intermediate scaling, in which $\mu$ and $\epsilon$ tend to zero at comparable rates, interpolates between them.

\medskip

\noindent {\em Earlier results.} The first rigorous treatment of the small-mass limit goes back to Freidlin~\cite{freidlin}, who considered an $\R^n$-valued system with {\em constant} friction, driven by white noise, namely the regime corresponding to $\epsilon=0$ (or, in our notation, $\alpha=+\infty$ with $z_\epsilon$ replaced by white noise from the outset). Under suitable assumptions, the position $x_\mu$ converges as $\mu\to 0$ to the unique solution of the first-order It\^o equation
\[
dx(t)=\gamma^{-1}b(x(t))\,dt+\gamma^{-1}\sigma(x(t))\,dw(t),
\]
with no additional drift. In this constant-friction setting, the friction matrix and the noise coefficient commute with the rescaled velocity, and the It\^o correction generated by the integration-by-parts trick used in the proof vanishes identically.

The setting of {\em colored} multiplicative noise was investigated by Kupferman, Pavliotis and Stuart in~\cite{pav-stu}, in the scalar case (i.e.\ $d=1$) with constant friction. Working at the level of formal asymptotic expansions and supported by extensive numerical simulations, they identified the form of the limiting equation as a function of the relative size of $\mu$ and $\epsilon$, parameterized by an exponent $\gamma\in(0,\infty)$ corresponding to $\mu\sim\epsilon^\gamma$. Three regimes emerged: for $\gamma\in(0,2)$ the limit is the bare It\^o equation, with no drift correction; for $\gamma>2$ the limit is the corresponding Stratonovich equation (i.e.\ It\^o plus the full Stratonovich correction); and for the critical value $\gamma=2$, where the relaxation time of the particle is comparable to that of the noise, a modified drift correction emerges, of neither It\^o nor Stratonovich type, depending in an essential way on the precise scaling. The arguments of~\cite{pav-stu}, however, are not fully rigorous. Convergence is established only on a formal level, and the analysis is restricted to the one-dimensional setting.

A complete rigorous justification of these formal results, in the multidimensional setting and for an infinite-dimensional driving noise, was provided shortly afterwards by Pavliotis and Stuart in~\cite{PS-2}. The model considered there is precisely a $d$-dimensional Langevin equation with constant inertial coefficient and a coloured forcing constructed as an infinite-dimensional generalised Ornstein-Uhlenbeck process. Namely, the noise has the form $v(x,t)=f(x)\eta(t)$, where $\eta$ takes values in $\ell^2$ and solves a linear SPDE
\[
d\eta(t)=-A\,\eta(t)\,dt+dw(t),
\]
driven by a $Q$-Wiener process $w(t)$ on $\ell^2$ with diagonal $A$ and $Q$, and where $f(x)\in\mathcal{L}(\ell^2,\R^d)$ is given by a series expansion in the eigenbasis of $A$. Under suitable summability conditions on the spectrum of $A$ and $Q$ - conditions which become more stringent as the exponent $\gamma$ controlling the ratio $\mu(\epsilon)/\epsilon^\gamma$ approaches and crosses the critical value $\gamma=2$ - the authors established strong $L^{2p}$ convergence in $C([0,T];\R^d)$, with explicit upper bounds on the convergence rate, in each of the three regimes $\gamma\in(0,2)$, $\gamma=2$ and $\gamma>2$. The method of proof in~\cite{PS-2} relies on  pathwise techniques and exploits in an essential way the constancy of the friction coefficient, which allows the explicit variation-of-constants representation
\[
y(t)=y_0\,e^{-t/\epsilon^\gamma}+\epsilon^{-\gamma}\int_0^te^{(s-t)/\epsilon^\gamma}\frac{v(x(s),s)}{\epsilon}\,ds+\epsilon^{-\gamma}\int_0^te^{(s-t)/\epsilon^\gamma}b(x(s))\,ds
\]
of the velocity. This explicit representation yields sharp bounds on the moments of the particle velocity and serves as the starting point for a series of integration-by-parts estimates, performed via the It\^o formula, that identify each contribution to the limiting drift. The case $\gamma=2$ is the most delicate one. There, neither It\^o-type nor Stratonovich-type estimates are sufficient, and an additional integration by parts is needed to control a resonance term whose limit produces a drift correction depending on the spectral properties of the operator $A$. The constancy of the friction matrix is used throughout the proof in~\cite{PS-2}, in particular in the explicit representation of the velocity and in the structure of the cascade of integration-by-parts identities. It is precisely this constancy that one wishes to remove in order to address the physically realistic situation of state-dependent friction.

In~\cite{hmdvw}, Hottovy, McDaniel, Volpe, and Wehr successfully treated the case of {\em variable} friction  in the small-mass limit, for white-noise forcing, corresponding in our notation to the regime $\alpha=+\infty$. They studied the multidimensional system
\[
\left\{\begin{array}{l}
\ds{dx^\mu(t)=v^\mu(t)\,dt,}\\[6pt]
\ds{\mu\,dv^\mu(t)=\left(F(x^\mu(t))-\gamma(x^\mu(t)) v^\mu(t)\right)\,dt+ \sigma(x^\mu(t))\,dw(t)}
\end{array}\right.
\]
with arbitrary smooth state-dependent matrices $\gamma$ (positive in the sense above) and $\sigma$. The main result of~\cite{hmdvw} establishes the $L^2$-convergence of $x^\mu(t)$, as $\mu\to 0$, to the solution of the It\^o equation
\[
dx(t)=\big(\gamma^{-1}(x(t))F(x(t))+S(x(t))\big)\,dt+\gamma^{-1}(x(t))\sigma(x(t))\,dw(t),
\]
where the noise-induced drift $S$ is given by
\[
S_i(x)=\partial_l\big[(\gamma^{-1})_{ij}(x)\big]\,J_{jl}(x),
\]
and the matrix $J(x)$ is the unique solution of the Lyapunov equation
\[
J(x)\gamma^\star+\gamma J(x)=\sigma\sigma^\star(x).
\]
The proof in~\cite{hmdvw} is based on a clever integration-by-parts manipulation of the second-order system, which represents the position $x^\mu$ in the form required to apply a theorem of Kurtz and Protter on the convergence of stochastic integrals along semi-martingales. As the authors point out, their method is tailored to the white-noise case. In the colored-noise version of the problem, which they discuss informally, one expects the limiting drift to take the form of a Lyapunov-type expression involving the joint dynamics of the velocity and of the Ornstein-Uhlenbeck driver. They sketch what such a result should look like for $\alpha=+\infty$ (i.e.\ when $\mu$ is much larger than $\epsilon$), but they do not provide a proof for $\alpha\in[0,+\infty)$. The present paper fills this gap.

\medskip

\noindent {\em Main result.} Our main result, Theorem~\ref{teo2}, identifies the limiting equation for $x_\epsilon(t)$ as $\epsilon\to 0$, with $\mu(\epsilon)\to 0$ and $\mu(\epsilon)/\epsilon\to\alpha\in[0,+\infty]$, for arbitrary state-dependent friction $\gamma(x)$ and noise coefficient $\sigma(x)$. The limit equation is the It\^o stochastic differential equation
\begin{equation}
\label{intro-limit}
dx^\alpha(t)=\big[(\gamma^{-1}b)(x^\alpha(t))+f_\alpha(x^\alpha(t))\big]\,dt+(\gamma^{-1}\sigma)(x^\alpha(t))\,A^{-1}B\,dw(t),
\end{equation}
where the noise-induced drift $f_\alpha$ has the explicit form
\[
[f_\alpha(x)]_i=\partial_l\big[\gamma^{-1}_{ij}(x)\big]\,[\alpha N_\alpha(x)]_{lj}+\partial_l\big[(\gamma^{-1}_{ij}\sigma_{jk})(x)\big]\,A^{-1}_{kh}\,[L_\alpha(x)]_{lh},
\]
and the matrices $L_\alpha(x)\in\R^{d\times n}$ and $N_\alpha(x)\in\R^{d\times d}$ are defined by
\[
L_\alpha(x)=\frac{1}{\alpha}\int_0^\infty e^{-\gamma(x)t/\alpha}\,\sigma(x)\,M\,e^{-A^\star t}\,dt,
\]
and
\[
N_\alpha(x)=\int_0^\infty e^{-\gamma(x)t}\,[L_\alpha(x)\sigma^\star(x)+\sigma(x)L_\alpha^\star(x)]\,e^{-\gamma^\star(x)t}\,dt,
\]
with \[M=\int_0^\infty e^{-As}BB^\star e^{-A^\star s}\,ds.\] The pair $(L_\alpha,\alpha N_\alpha)$ is precisely the off-diagonal/diagonal block structure of the covariance matrix of the unique invariant Gaussian measure of an auxiliary fast system for the variables $(u,z)$ obtained by freezing the slow variable. The coefficients $L_\alpha$ and $\alpha N_\alpha$ extend continuously to the limiting cases $\alpha=0$ and $\alpha=\infty$. Namely,  at $\alpha=0$ one finds $L_0(x)=\gamma^{-1}(x)\sigma(x)M$ and $\alpha N_\alpha\big|_{\alpha=0}=0$, recovering the Stratonovich-type drift expected from the Wong-Zakai theory applied directly to the first-order equation with colored Ornstein-Uhlenbeck driver; at $\alpha=\infty$ one finds $L_\infty(x)=0$, and the limit reduces precisely to the It\^o equation of~\cite{hmdvw}, with the drift correction $\alpha N_\alpha\big|_{\alpha=\infty}$ arising only from the kinetic-energy term. We call $f_\alpha$ the {\em inertial-It\^o drift}, as  it interpolates continuously between the Stratonovich correction at $\alpha=0$ and the drift of~\cite{hmdvw} at $\alpha=+\infty$, in agreement with what was observed numerically and formally in~\cite{pav-stu} in the constant-friction one-dimensional case.

\medskip

\noindent {\em Strategy of proof.} The proof of Theorem~\ref{teo2} proceeds along the lines of a slow-fast averaging argument, but the specific implementation depends in an essential way on the value of $\alpha$. We first establish, in Section~\ref{ss3.2}, uniform bounds on the moments of the rescaled velocity $\sqrt{\epsilon}\,u_\epsilon$ and of the rescaled noise $\sqrt{\epsilon}\,z_\epsilon$. These bounds yield uniform $L^p$-estimates on the increments of the slow process $x_\epsilon$, from which a standard tightness criterion ensures that the family of laws $\{\mathcal{L}(x_\epsilon)\}_{\epsilon\in(0,1)}$ is tight in $C([0,T];\R^d)$. By Skorokhod's representation theorem, one may then work along a subsequence as if convergence were almost sure.

The next step is the identification of the limit. Here we exploit the integration by parts that rewrite the position equation as the sum of  the expected drift $(\gamma^{-1}b)(x)\,dt$,  the It\^o term $(\gamma^{-1}\sigma)(x)\,A^{-1}B\,dw$,  two terms quadratic in the rescaled fast variables $\tilde{u}_n=\sqrt{\epsilon_n}\,u_n$ and $\tilde{z}_n=\sqrt{\epsilon_n}\,z_n$, of the form
\[
\alpha_n\int_0^t[D\gamma^{-1}(x_n(r))\tilde{u}_n(r)]\,\tilde{u}_n(r)\,dr,\qquad \int_0^t[D(\gamma^{-1}\sigma)(x_n(r))\tilde{u}_n(r)]\,A^{-1}\tilde{z}_n(r)\,dr,
\]
plus a remainder that vanishes in $L^2(\Omega;C([0,T];\R^d))$. The heart of the proof consists in showing that these two quadratic-in-fast-variables integrals converge, after time-averaging on short intervals over which the slow variable is essentially frozen, to integrals against the invariant Gaussian measure $\mu_{\alpha, x}$
 of the frozen fast system, yielding respectively to \[\int_0^t g_l^{i,j}(x(s))[\alpha N_\alpha(x(s))]_{l,j}\,ds,\ \ \ \ \ \ \int_0^t h_l^{i,j}(x(s))[L_\alpha(x(s))]_{l,j}\,ds.\]
 This is the content of Lemma~\ref{lemma5.1} and is achieved, in the regime $\alpha\in(0,+\infty)$, by a localization-and-averaging procedure. On time intervals of size $\delta_n=\mu_n\zeta_n$ (with $\zeta_n\to\infty$ chosen so that $\zeta_n^2(\mu_n\zeta_n+\mu_n+\epsilon_n)\to 0$), one freezes the slow variable $x_n$ to its initial value on the interval, replaces the fast process $\tilde{u}_n$ by the corresponding linear OU-type process $\bar{u}_n$ driven by frozen coefficients, and exploits the exponential ergodicity of the resulting frozen system to substitute time averages with averages against the unique Gaussian invariant measure $\mu_{\alpha,x}$. The covariance of this invariant measure is computed explicitly via the associated Lyapunov equation, yielding the matrices $L_\alpha(x)$ and $N_\alpha(x)$ that appear in the limiting drift.

The cases $\alpha=0$ and $\alpha=\infty$ require different arguments. When $\alpha_n\to 0$ the time intervals $[k\delta_n,(k+1)\delta_n]$ on which one would localize the slow process become too short to allow the averaging argument to take effect. Instead, one exploits the smoothness of the colored process $z_\epsilon$ relative to the particle relaxation scale and identifies the limit drift directly as $L_0(x)=\gamma^{-1}\sigma(x)M$, recovering the Stratonovich integral by an integration-by-parts argument on the colored noise. When $\alpha_n\to+\infty$ one shows by a comparison argument that the slow position $x_n$ stays close, in $L^2(\Omega;C([0,T];\R^d))$, to the position $\bar{x}_n$ associated with an auxiliary system in which the colored forcing $\sigma(x_n)z_n$ is replaced by the white-noise forcing $\sigma(\bar{x}_n)\,A^{-1}B\,dw(t)$. Since this auxiliary system is precisely the one studied in the small-mass limit problem treated in~\cite{hmdvw}, the result follows from theirs.

\medskip

\noindent {\em Applications.} Section~\ref{subsect examples drift} is devoted to applications of the main result to the motion of inertial particles in turbulent fluids, modelled via a Stokes-type drag of the form
\[
-c_0\,k_T(x(t))\Big(v(t)-\bar{u}(x(t))-\sum_{k\in K}\xi_k(x(t))z^k(t)\Big),
\]
where $\bar{u}(x)$ is the slow large-scale mean velocity, $k_T(x)$ is the local turbulent kinetic energy (which, by Smagorinsky-type closures, plays the role of an effective space-dependent viscosity coefficient), and $\sum_k\xi_k(x)z^k$ is an Ornstein-Uhlenbeck synthetic model of the intermediate-scale fluctuations. After applying Theorem~\ref{teo2} we identify two physically meaningful contributions to the inertial-It\^o drift. The first is a { centrifugal drift} $\sum_kD\xi_k(x)\xi_k(x)$, which captures the effect, surviving in the zero-mass limit, of the inertia of the particle subject to the rapid alternating rotations induced by vortical $\xi_k$. Even though the concept of centrifugal force is purely Newtonian, its memory persists in the drift and pushes particles outward from the centers of rotation. We illustrate this with a single-vortex example in $\R^2$, where the drift indeed produces a radial outward force, and with a multi-vortex example where the centrifugal effect leads to {\em concentration} of particles in the regions between vortices, a phenomenon directly connected to the theory of preferential concentration. The second is the {\em turbophoretic drift}, which arises whenever the local turbulent kinetic energy $k_T(x)$ is non-uniform. Inertial particles drift towards regions of lower turbulent intensity, providing a stochastic-analytic derivation of the classical turbophoresis effect~\cite{Reeks,Johnson}. Finally, we discuss the implications of these effects for the theory of particle aggregation and collision in turbulent flows, where centrifugal and turbophoretic drifts play a fundamental role in determining clustering patterns and collision rates.
 
\section{Assumptions and main results}
For every $\mu, \epsilon>0$, we  consider the following  system
 \begin{equation}
 \label{system1}
 \left\{\begin{array}{l}
\ds{\frac{dx}{dt}(t)=v(t),\ \ \ \ x(0)=x_0,}\\[10pt]
\ds{\frac{dv}{dt}(t)=\frac{1}{\mu}\left(b(x(t))-\gamma(x(t))v(t)+\sigma(x(t))\,z_\epsilon(t)\right),\ \ \ \ v(0)=v_0,}\end{array}\right.
	\end{equation}
	where
	\[z_\e(t)=\frac 1\epsilon\int_0^{t} e^{-\frac 1\epsilon A(t-s)}B\,dw(s),\ \ \ \ \ t\geq 0,\]
is the solution of the problem
\[\e\,dz(t)=- Az(t)\,dt+B\, d w(t),\ \ \ \ z(0)=0,\]
for some $m$-dimensional standard Brownian motion $w(t)=(w_1(t),\ldots,w_m(t))$, defined on a filtered probability space $(\Omega, \mathcal{F}, \{\mathcal{F}_t\}_{t\geq 0}, \mathbb{P})$,  with  $A \in\,\mathbb{R}^{n\times n}$ and  $B \in\,\mathbb{R}^{n\times m}$.
It is well known  that for every $p\geq 1$ and $T>0$
\begin{equation}\label{smf26}
\sup_{t \in\,[0,T]}\mathbb{E}\,\vert \sqrt{\e}z_\epsilon(t)\vert^{2p}\leq c_{T,p}	
\end{equation}

In what follows we will assume  the following conditions.
\begin{Hypothesis}
\label{hyp}
\begin{enumerate}
\item[1.]  The mapping  $\sigma:\mathbb{R}^d\to \mathbb{R}^{d\times n}$ is bounded and twice continuously differentiable, 	 with bounded derivatives. 
\item[2.] The mapping  $b:\mathbb{R}^d\to\mathbb{R}^d$ is continuously differentiable, with bounded derivative.
\item[3.] The mapping $\gamma:\mathbb{R}^d\to\mathbb{R}^{d\times d}$ is continuously differentiable with bounded derivative, and there exist $0<\gamma_0<\gamma_1$ such that
\begin{equation}\label{smf12}\gamma_0 \,I_{\mathbb{R}^d}\leq \frac {\gamma(x)+\gamma^\star(x)}2\leq \gamma_1\, I_{\mathbb{R}^d},\ \ \ \ x \in\,\mathbb{R}^d.\end{equation}
\item[4.] The eigenvalues of the matrix $A \in\,\mathbb{R}^{n\times n}$ have all strictly positive real part.
\end{enumerate} 

\end{Hypothesis}
As shown  below, the Lipschitz-continuity of the coefficients $\sigma$, $b$ and $\gamma$ imply that system \eqref{system1} is well posed in $L^2(\Omega;C^1([0,T];\mathbb{R}^d)\times C([0,T];\mathbb{R}^d))$, for every $T>0$.
\begin{Theorem}\label{teo1}
Under Hypothesis \ref{hyp},  for every $\epsilon, \mu >0$ and $x_0, v_0 \in\,\mathbb{R}^d$  system \eqref{system1} has a unique solution $(x_{\e,\,\mu}, v_{\e,\,\mu}) \in\,L^2(\Omega;C^1([0,T];\mathbb{R}^d)\times C([0,T];\mathbb{R}^d))$, for every $T>0$.	 
\end{Theorem}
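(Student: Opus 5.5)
Since $z_\epsilon$ is an explicit Gaussian process independent of the unknowns, I would treat the system pathwise as a random ODE in $\mathbb{R}^{2d}$. First I fix $\omega$ outside a null set. On that set the map $t\mapsto z_\epsilon(t,\omega)$ is continuous on $[0,T]$: by the stochastic Fubini/integration by parts formula, $z_\epsilon(t)=\epsilon^{-1}Bw(t)-\epsilon^{-2}\int_0^tAe^{-A(t-s)/\epsilon}Bw(s)\,ds$, which is continuous in $t$ for every continuous path of $w$. Writing $Y=(x,v)$, the system becomes $\dot Y=F(Y)+G(Y)z_\epsilon(t,\omega)$, with
\[
F(x,v)=\big(v,\ \mu^{-1}(b(x)-\gamma(x)v)\big),\qquad G(x,v)=\big(0,\ \mu^{-1}\sigma(x)\big).
\]
By Hypothesis \ref{hyp}, $b$, $\gamma$ and $\sigma$ are Lipschitz and $\sigma$ is bounded. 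Hence the vector field is continuous in $t$ and locally Lipschitz in $Y$, and Picard--Lindelöf gives a unique local solution for each $\omega$.

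The main point is to rule out blow-up, because the term $\gamma(x)v$ is not globally Lipschitz in $(x,v)$ when $\gamma$ is unbounded. Hypothesis 3 only bounds the symmetric part of $\gamma$, but $\gamma$ has a bounded derivative, so $|\gamma(x)|\le c(1+|x|)$. I would not use a Lipschitz bound here. Instead I take the scalar product of the $v$-equation with $v$:
\[
\frac{\mu}{2}\frac{d}{dt}|v|^2=\langle b(x),v\rangle-\langle \gamma(x)v,v\rangle+\langle\sigma(x)z_\epsilon,v\rangle\le c(1+|x|+|z_\epsilon(t)|)|v|-\gamma_0|v|^2 ,
\]
using \eqref{smf12} and the linear growth of $b$ together with the boundedness of $\sigma$. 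Adding $\frac{d}{dt}|x|^2=2\langle x,v\rangle$ and applying Young's inequality gives
\[
\frac{d}{dt}\big(|x|^2+\mu|v|^2\big)\le c_\mu\big(1+|x|^2+\mu|v|^2+\sup_{s\le T}|z_\epsilon(s)|^2\big).
\]
Gronwall's lemma then bounds $|x|^2+\mu|v|^2$ on every interval of existence, so the solution is global on $[0,T]$. Uniqueness follows from the local Lipschitz property: on a bounded set containing two solutions the field is Lipschitz, and Gronwall applies again.

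Next I address measurability and adaptedness. The solution is the limit of Picard iterates, each of which is a measurable and adapted functional of $z_\epsilon$. Alternatively, it is a continuous functional of the path $z_\epsilon|_{[0,t]}$, so $(x,v)$ is $\{\mathcal F_t\}$-adapted. Since $\dot x=v$ is continuous, $x\in C^1$.

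Finally I would verify the $L^2(\Omega)$ integrability. Taking expectations in the Gronwall bound gives
\[
\mathbb{E}\sup_{t\le T}\big(|x|^2+\mu|v|^2\big)\le c\big(1+|x_0|^2+\mu|v_0|^2+\mathbb{E}\sup_{t\le T}|z_\epsilon(t)|^2\big).
\]
The last expectation is finite by the representation of $z_\epsilon$ above combined with Doob's inequality for $w$. This finiteness holds for fixed $\epsilon$ and is stronger than \eqref{smf26}, which is only a pointwise-in-time bound. The sup-norm of $\dot x=v$ is controlled by the same bound, so $(x,v)\in L^2(\Omega;C^1([0,T];\mathbb{R}^d)\times C([0,T];\mathbb{R}^d))$.

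The only step needing care is the a priori energy estimate, which must be done without a global Lipschitz bound on $\gamma(x)v$.
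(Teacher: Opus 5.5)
Your proposal is correct and follows essentially the same route as the paper: local existence from the local Lipschitz property, then an energy estimate on $|v|^2$ using the coercivity \eqref{smf12} of $\gamma$, coupled with a bound on $|x|^2$ and Gronwall's lemma, which rules out blow-up and gives the moment bounds. The differences are only cosmetic. You merge the two estimates into the single functional $|x|^2+\mu|v|^2$, whereas the paper uses the variation-of-constants bound \eqref{smf24} followed by Gronwall in $x$. You also justify explicitly the adaptedness of the solution and the finiteness of $\mathbb{E}\sup_{t\le T}|z_\epsilon(t)|^2$, both of which the paper uses only implicitly.
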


\begin{proof} For every $\e, \mu>0$, system \eqref{system1} can be rewritten as 
\begin{equation}\label{smf23}\frac{dy}{dt}(t)=M_{\epsilon,\mu}( y(t)),\ \ \ \ \ y(0)=\begin{pmatrix}x_0\\v_0\end{pmatrix},\end{equation}
where
\[y(t):=(
x(t), v(t)),\ \ \ \ \ M_{\epsilon,\mu}(x,v):=\frac 1\mu\,(\mu\,v ,b(x)-\gamma(x)v+\sigma(x)z_\epsilon).
\]
The mapping $M_{\epsilon,\mu}$ is locally Lipschitz-continuous in $\mathbb{R}^d\times \mathbb{R}^d$, hence there exists a local solution $y_{\e,\mu}=(x_{\epsilon, \mu}, v_{\epsilon, \mu})$ for equation \eqref{smf23} in $ C([0,\tau_{\epsilon,\mu}];\mathbb{R}^d\times \mathbb{R}^d)$, for some stopping time $\tau_{\epsilon,\mu}$.
Now
\[\begin{array}{l}
\ds{\frac 12\frac{d}{dt}\vert v_{\epsilon,\mu}(t)\vert^2=\frac 1\mu b(x_{\epsilon,\mu}(t))\cdot v_{\epsilon,\mu}(t)+\frac 1\mu \gamma(x_{\epsilon,\mu}(t))v_{\epsilon,\mu}(t)\cdot v_{\epsilon,\mu}(t)+\frac 1\mu\sigma( x_{\epsilon,\mu}(t))z_\e(t)\cdot v_{\epsilon,\mu}(t)}\\[10pt]
\ds{\quad \quad\quad \quad\quad \quad\quad \quad \leq -\frac{\gamma_0}{2\mu}\vert v_{\epsilon,\mu}(t)\vert^2+\frac{c}{\mu}\left(1+\vert	 z_\epsilon(t)|^2+\vert x_{\epsilon,\mu}(t)\vert^2\right).}
\end{array}\]
Therefore, for every $t\leq \tau_{\epsilon,\mu}$
\begin{equation}\label{smf24}\vert v_{\epsilon,\mu}(t)\vert^2\leq e^{-\frac{\gamma_0}{\mu}t}\vert v_0\vert^2+\frac{c}{\mu}\int_0^t e^{-\frac{\gamma_0}{\mu}(t-s)}\left(1+\vert x_{\epsilon,\mu}(s)\vert^2+\vert	 z_\epsilon(s)|^2\right)\,ds.\end{equation}
Now, since
\[\frac{d}{dt}\vert x_{\epsilon,\mu}(t)\vert^2\leq \vert x_{\epsilon,\mu}(t)\vert^2+\vert v_{\epsilon,\mu}(t)\vert^2,\]
we get
\[\vert x_{\epsilon,\mu}(t)\vert^2\leq c_T\left(\vert x_0\vert^2+\int_0^t \vert v_{\epsilon,\mu}(s)\vert^2\,ds\right),\ \ \ \ \ t\leq \tau_{\e,\mu}.\]
Due to \eqref{smf24}, this implies
\[\vert x_{\epsilon,\mu}(t)\vert^2\leq c_T\left(1+\vert x_0\vert^2+\vert v_0\vert^2+\sup_{t \in\,[0,T]}\vert	 z_\epsilon(t)|^2\right)+c_T\int_0^t 
\vert x_{\epsilon,\mu}(s)\vert^2\,ds,\]
and hence
\begin{equation}\label{smf160}\begin{array}{l}
\ds{	\sup_{t \in\,[0,\tau_{\epsilon,\mu}]}\vert x_{\epsilon,\mu}(t)\vert^2\leq c_T\Big(1+\vert x_0\vert^2+\vert v_0\vert^2+\sup_{t \in\,[0,T]}\vert	 z_\epsilon(t)|^2\Big).}
\end{array}\end{equation}
This implies that $\tau_{\epsilon,\mu}=T$, $\mathbb{P}$-a.s., so that for every $T>0$ the unique solution $y_{\epsilon,\mu}$ belongs to $C([0,T];\mathbb{R}^d\times \mathbb{R}^d)$, $\mathbb{P}$-a.s. Moreover, \eqref{smf160} also implies 
\[\mathbb{E}\vert x_{\epsilon, \mu}\vert^p _{C([0,T];\mathbb{R}^d)}<\infty,\]
for every $p\geq 1$, and, due to \eqref{smf24} this gives 
\[\mathbb{E}\vert v_{\epsilon, \mu}\vert^p _{C([0,T];\mathbb{R}^d)}<\infty.\]

	\end{proof}

Once proved that for every fixed $\epsilon, \mu>0$  system \eqref{system1} has a unique solution $(x_{\epsilon, \mu}, v_{\epsilon, \mu})$,  we want to   study the limiting behavior of $x_{\epsilon,\,\mu}$, as both $\e$ and $\mu$ go to zero.  
In what follows we will assume $\mu=\mu(\e)$
with
\[\lim_{\e \to 0}\mu(\epsilon)=0,\]
and, for every $\epsilon>0$ and $t \in\,[0,T]$,  we will define \[x_\e(t)=x_{\epsilon, \mu(\epsilon)}(t),\ \ \ \ \ v_{\epsilon}(t):=v_{\epsilon, \mu(\epsilon)}(t).\]

\begin{Theorem} \label{teo2}
Assume Hypothesis \ref{hyp}, and for every $\a \in\,[0,+\infty]$, denote by  $x^\alpha$  the unique solution of the problem
\begin{equation}
\label{main-1}
\begin{array}{l}
\ds{dx^\alpha(t)=(\gamma^{-1}b)(x^\alpha(t))+f_\alpha(x^\alpha(t))\,dt+(\gamma^{-1}\sigma)(x^\alpha(t))\, B\,dw(t),}	\end{array}
	\end{equation}
where $f_\alpha:\mathbb{R}^d\to\mathbb{R}^d$ is the vector field \begin{equation}\label{smf91}
\begin{array}{l}
\ds{	[f_\alpha(x)]_{i}=\partial_l\,\gamma^{-1}_{i,j}(x)	[\alpha N_\alpha(x)]_{l,j}+\partial_l\,(\gamma^{-1}_{i,j}\sigma_{j,k})(x)A^{-1}_{k,h}[L_\alpha(x)]_{l,h},}
\end{array}
\end{equation}
and  $N_\alpha \in\,\mathbb{R}^{d\times d}$ and $L_\alpha\in\,\mathbb{R}^{d\times n}$ are the matrices 
\begin{equation}
\label{smf9-tris}
L_{\alpha}(x):=\frac 1\alpha \int_0^\infty e^{-\frac 1\alpha \gamma(x)t}\sigma(x)Me^{-A^\star t}\,dt,	
\end{equation}
and
\begin{equation}
\label{smf9-quart}
N_\alpha(x):=\int _0^\infty e^{-\gamma(x) t}\,[L_{\alpha}(x)\, \sigma^\star(x)+\sigma(x) L^\star_{\alpha}(x)]\,e^{-\gamma^\star(x)t}\,dt,	
\end{equation}
with
\begin{equation}
\label{smf10-bis}
M:=\int_0^{+\infty} e^{-A s}BB^\star e^{-As}\,ds.	
\end{equation}
Then, if
\[\lim_{\epsilon\to 0}\frac{\mu(\epsilon)}{\epsilon}=\alpha,\]
for every $\eta>0$ and $T>0$ we have
\begin{equation}
\label{main}
\lim_{\e\to 0}\mathbb{P}\left(	\sup_{t \in\,[0,T]}|x_{\epsilon}(t)-x^\alpha(t)|>\eta\right)=0.
\end{equation}
\end{Theorem}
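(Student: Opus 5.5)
The plan follows the slow-fast averaging scheme outlined in the introduction.

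\textbf{Step 1: rescaling and the basic identity.} Set $u_\epsilon=v_\epsilon$, $\tilde u_\epsilon=\sqrt{\epsilon}\,v_\epsilon$, $\tilde z_\epsilon=\sqrt{\epsilon}\,z_\epsilon$ and $\alpha_\epsilon=\mu(\epsilon)/\epsilon$. Multiplying the velocity equation by $\gamma^{-1}(x_\epsilon)$ gives
\[
dx_\epsilon=(\gamma^{-1}b)(x_\epsilon)\,dt+(\gamma^{-1}\sigma)(x_\epsilon)z_\epsilon\,dt-\mu\,\gamma^{-1}(x_\epsilon)\,dv_\epsilon .
\]
The last term is treated by integration by parts:
\[
\mu\,\gamma^{-1}(x_\epsilon)\,dv_\epsilon=d(\mu\gamma^{-1}(x_\epsilon)v_\epsilon)-\mu[D\gamma^{-1}(x_\epsilon)v_\epsilon]v_\epsilon\,dt .
\]
This produces the term $\alpha_\epsilon[D\gamma^{-1}(x_\epsilon)\tilde u_\epsilon]\tilde u_\epsilon\,dt$ plus a boundary term $\mu\gamma^{-1}v_\epsilon$. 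For the colored-noise term I use $z_\epsilon\,dt=A^{-1}(B\,dw-\epsilon\,dz_\epsilon)$ and integrate by parts once more:
\[
\epsilon\,(\gamma^{-1}\sigma)(x_\epsilon)A^{-1}dz_\epsilon=d\big(\epsilon(\gamma^{-1}\sigma)(x_\epsilon)A^{-1}z_\epsilon\big)-[D(\gamma^{-1}\sigma)(x_\epsilon)\tilde u_\epsilon]A^{-1}\tilde z_\epsilon\,dt .
\]
This gives the It\^o term $(\gamma^{-1}\sigma)A^{-1}B\,dw$ and the second quadratic integral, with boundary terms $\sqrt{\epsilon}\,\tilde z_\epsilon$ and $\sqrt{\mu\alpha_\epsilon}\,\tilde u_\epsilon$.

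\textbf{Step 2: uniform bounds and tightness.} I prove $\sup_{t\le T}\mathbb{E}|\tilde u_\epsilon(t)|^{2p}\le c$. The Lyapunov computation in the proof of Theorem \ref{teo1} is redone on the equation
\[
d\tilde u=\mu^{-1}\big(\sqrt{\epsilon}\,b-\gamma\tilde u+\sigma\tilde z\big)\,dt,
\]
using \eqref{smf26} and the coercivity \eqref{smf12}. Together with maximal bounds of order $\epsilon^{-\delta}$ on $\sup_t|\tilde z|$ and $\sup_t|\tilde u|$, this makes all boundary terms vanish in $L^2(\Omega;C([0,T]))$. The representation of Step 1 then gives uniform H\"older-type moment bounds for $x_\epsilon$, hence tightness in $C([0,T];\mathbb{R}^d)$. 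By Skorokhod's theorem, and since the limit equation \eqref{main-1} has Lipschitz coefficients and a pathwise unique solution, it suffices to identify the limits of the two quadratic integrals. Convergence in probability \eqref{main} then follows by the Gy\"ongy–Krylov argument.

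\textbf{Step 3: averaging, which is the main obstacle.} For $\alpha\in(0,\infty)$, partition $[0,T]$ into intervals of length $\delta_\epsilon=\mu\zeta_\epsilon$ with $\zeta_\epsilon\to\infty$ slowly. On each interval, freeze $x_\epsilon$ at the left endpoint $x_k$. Replace $(\tilde u,\tilde z)$ by the linear Gaussian process $(\bar u,\tilde z)$ solving
\[
\mu\,d\bar u=(-\gamma(x_k)\bar u+\sigma(x_k)\tilde z)\,dt .
\]
The freezing error is controlled by $\zeta_\epsilon^2(\delta_\epsilon+\mu+\epsilon)\to0$. In fast time $s=t/\epsilon$, the pair $(\bar u,\tilde z)$ is an Ornstein–Uhlenbeck process with drift matrix built from $\gamma/\alpha$ and $A$. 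This process is exponentially ergodic, uniformly in $x$ by \eqref{smf12}, and its time averages over windows of length $\zeta_\epsilon\to\infty$ converge in $L^2$ to moments under the invariant Gaussian law $\mu_{\alpha,x}$. I solve the stationary Lyapunov equation blockwise. The $zz$-block is $M$ (up to the normalization in \eqref{smf10-bis}). The $uz$-block satisfies $\frac1\alpha(\gamma L-\sigma M)+LA^\star=0$, which has exactly the solution \eqref{smf9-tris}. The $uu$-block satisfies $\gamma C+C\gamma^\star=L\sigma^\star+\sigma L^\star$, so that $\alpha C=\alpha N_\alpha$. Summing the interval contributions as Riemann sums yields the drift $f_\alpha$ in \eqref{smf91}. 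The delicate part is controlling the cross-term errors uniformly over the roughly $T/\delta_\epsilon$ intervals, and I expect this to need mixing estimates with explicit exponential rates.

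\textbf{Step 4: the endpoint cases.} For $\alpha=0$, i.e.\ $\mu\ll\epsilon$, $\tilde u$ is slaved to the forcing. I write
\[
\tilde u=\gamma^{-1}\sigma\tilde z+\sqrt{\epsilon}\,\gamma^{-1}b-\mu\gamma^{-1}\dot{\tilde u}
\]
and show that the correction contributes nothing. The first quadratic integral then vanishes because of its prefactor $\alpha_\epsilon$. The second integral becomes an average of $\tilde z\otimes\tilde z$, which converges to $M$ by averaging the OU process alone, giving $L_0=\gamma^{-1}\sigma M$. For $\alpha=\infty$, I compare $x_\epsilon$ in $L^2(\Omega;C)$ with the position of the auxiliary system driven by $\sigma(\bar x)A^{-1}B\,dw$. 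Since $\mu\gg\epsilon$, integrating the velocity over times of order $\mu$ sees $z_\epsilon\,dt\approx A^{-1}B\,dw$, and the comparison uses a Gronwall bound on the velocity difference. The conclusion then follows from the result of \cite{hmdvw}, after checking that their Lyapunov matrix $J$ equals $\lim_{\alpha\to\infty}\alpha N_\alpha$ with noise $\sigma A^{-1}B$, and that $L_\infty=0$.
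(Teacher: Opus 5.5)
Your plan follows the paper's proof closely. It uses the same integration-by-parts representation with the two quadratic integrals, the same localization on intervals of length $\mu\zeta_\epsilon$ for $\alpha\in(0,\infty)$, the same slaving argument for $\alpha=0$, and the same comparison with the white-noise system of \cite{hmdvw} for $\alpha=\infty$. Your identification of $\lim_{\alpha\to\infty}\alpha N_\alpha$ with their Lyapunov matrix for the noise $\sigma A^{-1}B$, via $AM+MA^\star=BB^\star$, is correct and useful.

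There is, however, a gap in Step 2 in the regime $\alpha=\infty$. The energy estimate you describe only yields $\sup_t\mathbb{E}|\sqrt{\epsilon}\,v_\epsilon(t)|^{2p}\le c$, that is $\mathbb{E}|\sqrt{\mu}\,v_\epsilon|^2\lesssim\alpha_\epsilon$. It bounds $v_\epsilon$ through $|z_\epsilon|\sim\epsilon^{-1/2}$ and cannot see that the colored forcing averages out over the relaxation time $\mu\gg\epsilon$. Two consequences follow:
\begin{itemize}
\item the boundary term $\mu\gamma^{-1}(x_\epsilon)v_\epsilon=\sqrt{\mu\alpha_\epsilon}\,\gamma^{-1}(x_\epsilon)\tilde u_\epsilon$ is only bounded in $L^2$ by $\mu^2/\epsilon$;
\item the quadratic integral $\alpha_\epsilon\int_0^t[D\gamma^{-1}(x_\epsilon)\tilde u_\epsilon]\tilde u_\epsilon\,dr$ is only bounded by $\alpha_\epsilon$.
\end{itemize}
With $\mu=\epsilon^{1/3}$, say, both bounds blow up. So your claims that all boundary terms vanish, and that the representation gives uniform H\"older moment bounds, hold only for $\alpha<\infty$.

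Your $\alpha=\infty$ comparison needs the same bound. To make $z_\epsilon\,dt\approx A^{-1}B\,dw$ precise you must write $\sigma(x_\epsilon)z_\epsilon\,dt=\sigma(x_\epsilon)A^{-1}B\,dw-\epsilon\,d(\sigma(x_\epsilon)A^{-1}z_\epsilon)+\epsilon[D\sigma(x_\epsilon)v_\epsilon]A^{-1}z_\epsilon\,dt$. The resulting remainder $\epsilon[D\sigma(x_\epsilon)v_\epsilon]A^{-1}z_\epsilon$ must be controlled uniformly in $\epsilon$, and so must the difference of the terms $\mu[D\gamma^{-1}(x)v]v$ for the two systems. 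Both require moments of $\sqrt{\mu}\,v_\epsilon$, which Step 2 does not provide.

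The missing ingredient is the estimate $\sup_{\epsilon\in(0,1)}\sup_{t\le T}\mathbb{E}|\sqrt{\mu}\,v_\epsilon(t)|^{2p}\le c_{p,T}$, together with a maximal version with loss $\mu^{-\delta}$, used to kill $\mu\gamma^{-1}v_\epsilon$ in $L^2(\Omega;C([0,T]))$. The paper proves it by using your heuristic $z_\epsilon\,dt\approx A^{-1}B\,dw$ already at the level of a priori bounds. The process
\[
\Lambda_\epsilon=\sqrt{\mu}\,v_\epsilon+\sqrt{\epsilon/\mu}\;\sigma(x_\epsilon)A^{-1}\sqrt{\epsilon}\,z_\epsilon
\]
solves a dissipative equation with damping $\gamma(x_\epsilon)/\mu$. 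It is driven by $\mu^{-1/2}\sigma(x_\epsilon)A^{-1}B\,dw$ plus terms carrying the small factors $\sqrt{\epsilon/\mu}$ and $\sqrt{\epsilon}$. It\^o's formula for $|\Lambda_\epsilon|^{2p}$ then closes, once the term $\sqrt{\epsilon}\,[D\sigma(x_\epsilon)\sqrt{\mu}\,v_\epsilon]A^{-1}\sqrt{\epsilon}\,z_\epsilon$ is absorbed into the dissipation. This absorption works outside the event where $\epsilon^{p}|\sqrt{\epsilon}\,z_\epsilon|^{2p}$ is of order one, which has probability $O(\epsilon^k)$ for every $k$. With this lemma added, your outline goes through as in the paper.
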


\begin{Remark}
{\em 	 We can easily check that for every $\alpha \in\,(0,+\infty)$ and $x \in\,\mathbb{R}^d$
\[L_\alpha(x)=\int_0^\infty e^{-\gamma(x) t}\sigma(x)Me^{-\alpha A^\star  t}\,dt.\]
Thus, we can continuously extend $L_\alpha(x)$ to the case $\alpha=0$ and $\alpha=\infty$ by setting
\begin{equation}\label{smf98}L_0(x)=\gamma^{-1}(x)\sigma(x)M,\ \ \ \ \ L_\infty(x)=0.\end{equation}
     Moreover, we have
     \[\begin{array}{l}
\ds{\alpha N_\alpha(x)=\int_0^\infty e^{-\gamma(x)t}\int_0^\infty e^{-\frac 1\alpha \gamma(x) s}\sigma(x)M e^{-A^\star s}\,ds\,\sigma^\star(x)e^{-\gamma^\star(x) t}\,dt}\\[14pt]
\ds{\quad \quad \quad \quad \quad \quad +\int_0^\infty e^{-\gamma(x)t}\sigma(x)\int_0^\infty e^{-A s}M^\star \sigma^\star (x)e^{-\frac 1\alpha \gamma^\star(x) s}\,ds\,e^{-\gamma^\star(x) t}\,dt.
}	
\end{array}
\]
Hence, we can continuously extend $\alpha N_\alpha(x)$ for $\alpha=0$ and $\alpha=\infty$ as
\begin{equation}\label{smf99}
0 N_0(x)=0
\end{equation}
and 
\begin{equation}\label{smf100}\infty N_{\infty}(x)=\int_0^\infty e^{-\gamma(x)t}\sigma(x)\left[M(A^\star)^{-1}+A^{-1}M^\star \right]\sigma^\star(x)e^{-\gamma^\star(x) t}\,dt.	
\end{equation}

}
\hfill{$\Box$}	
\end{Remark}

\subsection{The  case when $\gamma$ is scalar}

We assume here
\[\gamma(x)=\lambda(x)\,\text{Id}_{\mathbb{R}^d},\ \ \ \ x \in\,\mathbb{R}^d,\]
for some function $\lambda:\mathbb{R}^d\to\mathbb{R}$ which is twice continuosly differentiable, with bounded derivatives, such that
\[0<\lambda_0\leq \lambda(x)\leq \lambda_1,\ \ \ \ x \in\,\mathbb{R}^d.\]
Moreover, we take
\[A=\text{Id}_{\mathbb{R}^n}.\] 
It is possible to check that in this case, for every $\alpha \in\,[0,+\infty]$ and $x \in\,\mathbb{R}^d$,  the matrices $L_\alpha$ and $N_\alpha$ are given by
\begin{equation}
\label{smf150-*}
L_\alpha(x)=\frac{1}2\,(\lambda(x)+\alpha)^{-1}\,\sigma(x)\,BB^\star,	
\end{equation}
and
\begin{equation}
\label{smf151-*}
N_\alpha(x)=\frac{1}2(\lambda(x)(\lambda(x)+\alpha))^{-1}\,(\sigma(x)B)(\sigma(x)B)^\star.
\end{equation}
This implies that
\begin{equation}\label{smf153}
\begin{array}{l}\ds{f_\alpha(x)=\frac 12 \,\frac{\lambda(x)}{\lambda(x)+\alpha}\,\text{Tr}\,\left[D(\lambda^{-1}\sigma)(x)\,(\lambda^{-1}\sigma)(x)BB^\star\right]-\frac 12 \,\frac{\alpha}{\lambda(x)+\alpha}\,(\sigma(x)B)(\sigma(x)B)^\star\frac{\nabla \lambda(x)}{\lambda^3(x)}}\\[14pt]
\ds{\quad =\frac 12 \,\text{Tr}\left[D((\lambda^{-1}\sigma)(x)B)\,((\lambda^{-1}\sigma)(x)B)\right]-\frac 12 \,\frac{\alpha}{\lambda(x)+\alpha}\,\text{Tr}\left[D((\lambda^{-1}\sigma)(x)B)\,((\lambda^{-1}\sigma )(x)B)\right]}\\[14pt]
\ds{\quad \quad \quad \quad \quad \quad \quad \quad \quad \quad \quad \quad \quad \quad \quad -\frac 12 \,\frac{\alpha}{\lambda(x)+\alpha}\,(\sigma(x)B)(\sigma(x)B)^\star\frac{\nabla \lambda(x)}{\lambda^3(x)}.}	
\end{array}
\end{equation}
In particular, the limiting equation \eqref{main-1} becomes
\begin{equation}
\label{smf155}
\begin{array}{l}
\ds{dx^\alpha(t)=(\lambda^{-1}b)(x^\alpha(t))-\frac 12 \,\frac{\alpha}{\lambda(x^\alpha(t))+\alpha}\,h(x^\alpha(t))\,dt+(\lambda^{-1}\sigma)(x^\alpha(t))\circ B\,dw(t),}	\end{array}
	\end{equation}	
where
\[h(x)=\text{Tr}\,\left[D((\lambda^{-1}\sigma)(x)B)\,((\lambda^{-1}\sigma)(x)B)\right]+(\sigma(x)B)(\sigma(x)B)^\star\frac{\nabla \lambda(x)}{\lambda^3(x)}.\]

Thus, when $\alpha=0$, the limiting equation is simply the first order equation with drift $\lambda^{-1}b$ and diffusion $\lambda^{-1}\sigma$, where the stochastic differential has to be understood in Stratonovich sense. On the other hand, when $\a=\infty$, the limiting equation is a first order equation with drift $\lambda^{-1}b$ and diffusion $\lambda^{-1}\sigma$, with the stochastic differential of It\^o type, where the extra drift
\[-\frac 12\,(\sigma(x)B)(\sigma(x)B)^\star\frac{\nabla \lambda(x)}{\lambda^3(x)}\]
emerges. In all intermediate cases $\alpha \in\,(0,+\infty)$  a sort of {\em interpolation} happens between the two extreme cases, through the term
\[-\frac 12 \,\frac{\alpha}{\lambda(x)+\alpha}\,h(x).\]

\medskip

Next, we consider the special case when
\[\sigma(x)=\lambda(x)\,\xi(x),\ \ \ \ \ \ x \in\,\mathbb{R}^d,\] 
for some mapping $\xi:\mathbb{R}^d\to \mathbb{R}^{d\times n}$ which is bounded and twice continuously differentiable, 	 with bounded derivatives.
In this case system \eqref{system1} can be rewritten as
\begin{equation}
 \label{system1-special}
 \left\{\begin{array}{l}
\ds{\frac{dx}{dt}(t)=v(t),\ \ \ \ x(0)=x_0,}\\[10pt]
\ds{\mu(\epsilon)\,\frac{dv}{dt}(t)=-\lambda(x(t))\,\left(v(x(t))-\xi(x(t))z_\epsilon(t)\right)+b(x(t)),\ \ \ \ v(0)=v_0,}\end{array}\right.
	\end{equation}
	where
	\[z_\e(t)=\frac 1\epsilon\int_0^{t} e^{-\frac {(t-s)}{\epsilon}}B\,dw(s),\ \ \ \ \ t\geq 0.\]
Due to \eqref{smf150-*} and \eqref{smf151-*}, the matrices $L_\alpha$ and $N_\alpha$,  are now given by
\begin{equation}
\label{smf150}
L_\alpha(x)=\frac 12\,\frac{\lambda(x)}{\lambda(x)+\alpha}\,\xi(x)\,BB^\star,	
\end{equation}
and
\begin{equation}
\label{smf151}
N_\alpha(x)=\frac 12\,\frac{\lambda(x)}{\lambda(x)+\alpha}\,\,(\xi(x)B)(\xi(x)B)^\star.
\end{equation}
Moreover,
\begin{equation}\label{smf153-bis}
\begin{array}{l}\ds{f_\alpha(x) =\frac 12 \,\text{Tr}\,\left[D(\xi(x)B)\,(\xi(x)B)\right]}\\[14pt]
\ds{\quad \quad \quad \quad \quad -\frac 12 \,\frac{\alpha}{\lambda(x)+\alpha}\left[\text{Tr}\,\left[D(\xi(x)B)(\xi(x)B)\right]+(\xi(x)B)(\xi(x)B)^\star\frac{\nabla \lambda(x)}{\lambda(x)}\right],}	
\end{array}
\end{equation}
and the limiting equation \eqref{main-1} becomes
\begin{equation}
\label{smf155-bis}
\begin{array}{l}
\ds{dx^\alpha(t)=(\lambda^{-1}b)(x^\alpha(t))-\frac 12 \,\frac{\alpha}{\lambda(x^\alpha(t))+\alpha}\,h(x^\alpha(t))\,dt+\xi(x^\alpha(t))\circ B\,dw(t),}	\end{array}
	\end{equation}	
with
\[h(x)=\text{Tr}\,\left[D(\xi(x)B)(\xi(x)B)\right]+(\xi(x)B)(\xi(x)B)^\star\frac{\nabla \lambda(x)}{\lambda(x)}.\]

\section{Preliminaries for the proof of Theorem \ref{teo2}}

If  we define 
\[u(t):= v(t)-(\gamma^{-1}b)(x(t)),\ \ \ \ \ \ \ \ t\geq 0,\]
system \eqref{system1} can be rewritten as
\begin{equation}
 \label{system1-bis}
 \left\{\begin{array}{l}
\ds{\frac{dx}{dt}(t)=u(t)+(\gamma^{-1}b)(x(t)),\ \ \ \ x(0)=x_0,}\\[10pt]
\ds{\mu(\epsilon)\,\frac{du}{dt}(t)=-\gamma(x(t))\,u(t)+\sigma(x(t))\,z_\epsilon(t)-\mu(\e)\,R(x(t),u(t)),\ \ \ \ u(0)=v_0-(\gamma^{-1}b)(x_0),}\end{array}\right.
	\end{equation}
	where
\begin{align}\label{smf16}
R(x,u)=&\left([D\gamma^{-1}(x)(\gamma^{-1}b)(x)]b(x)+\gamma^{-1}(x)Db(x)(\gamma^{-1}b)(x)\right)\\[10pt]
& +\left(\gamma^{-1}(x)Db(x)u+[D\gamma^{-1}(x)u]b(x)\right)=:S_1(x)+S_2(x)u.
\end{align}
Notice that the mapping $R:\mathbb{R}^d\times \mathbb{R}^d\to\mathbb{R}^d$ is continuous and for every $x, u \in\,\mathbb{R}^d$
\begin{equation}
\label{R(x,u)} 
\vert S_1(x)\vert\leq c\, \left(1+|x|^2\right),\ \ \ \ \ \vert S_2(x)\vert_{\mathbb{R}^{d\times d}}\leq c\,(1+|x|).\end{equation}
Moreover, the mapping $S_2:\mathbb{R}^d\to\mathbb{R}$ is Lipschitz continuous and the mapping $S_1:\mathbb{R}^d\to\mathbb{R}$ is locally Lipschitz continuous, with
\begin{equation}
\label{smf108}
\vert S_1(x)-S_1(y)\vert	_{\mathbb{R}^d}\lesssim (1+\vert x\vert+\vert y\vert )\vert x-y\vert.
\end{equation}
\subsection{The fast system}
For every $\epsilon, \mu>0$ we define
\[\hat{z}_\epsilon(t):=\sqrt{\epsilon}\, z_\e(\e t),\ \ \ \ \   \ \hat{u}_{\epsilon}(\e t):=\sqrt{\epsilon}\,\,u_{\epsilon}(\e t),\ \ \ \ \ \ \hat{x}_{\e}(t)=x_{\e}(\e t).\]
It is immediate to check that
\begin{equation}
 \label{smf2}
 \left\{\begin{array}{l}
 \ds{\frac{d\hat{x}_{\epsilon}}{dt}(t)=\sqrt{\e}\,\hat{u}_{\epsilon}(t)+\e\,(\gamma^{-1}b)(\hat{x}_{\epsilon}(t)),}\\[10pt]
\ds{\frac{d\hat{u}_{\epsilon}}{dt}(t)=\frac{\epsilon}{\mu(\e)}\left[-\gamma(\hat{x}_{\epsilon}(t))\,\hat{u}_{\epsilon}(t)+\sigma(\hat{x}_{\epsilon}(t))\,\hat{z}_\epsilon(t)\right]-\e\left[\,\sqrt{\e}\,S_1(\hat{x}_{\epsilon}(t))+ S_2(\hat{x}_{\epsilon, \mu}(t))\,\hat{u}_{\epsilon}(t)\right],}\\[10pt]
\ds{d\hat{z}_\epsilon(t)=-A\hat{z}_\epsilon(t)\,dt+Bdw(t).}\end{array}\right.
	\end{equation}


Next, for every $\alpha \in\,(0,+\infty)$ and $x, u \in\,\mathbb{R}^d$, we introduce the system
\begin{equation}
 \label{smf4}
 \left\{\begin{array}{l}
\ds{du(t)=\frac{1}{\alpha}\left[-\gamma(x)\,u(t)+\sigma(x)\,z(t)\right]\,dt,\ \ \ \ \ u(0)=u,}\\[10pt]
\ds{dz(t)=-Az(t)\,dt+Bdw(t),\ \ \ \ \ z(0)=z.}\end{array}\right.
	\end{equation}
	In what follows, we will denote its solution by
	\[(u_\alpha^{x,u}(t),z^z(t)).\]
\begin{Lemma}
For every $\alpha \in\,(0,\infty)$ and $x \in\,\mathbb{R}^d$, system \eqref{smf4} has a unique invariant measure  given by the centered Gaussian measure $\mu_{\alpha,x}$ having covariance matrix
\begin{equation}\label{smf11}Q_{\alpha}(x)=\begin{pmatrix}
	\ds{N_\alpha(x) } & L_{\alpha}(x)\\
	&\\
	L^\star_{\alpha}(x)  & \ds{M}
\end{pmatrix},\end{equation}
where $M$ is the matrix
\begin{equation}
\label{smf10}
M:=\int_0^{+\infty} e^{-A s}BB^\star e^{-As}\,ds,	
\end{equation}
and for every $\alpha \in\,(0,+\infty)$ and $x \in\,\mathbb{R}^d$, $L_\alpha(x)$ and $N_\alpha(x)$ are the matrices defined by 
\begin{equation}
\label{smf9}
L_{\alpha}(x):=\frac 1\alpha \int_0^\infty e^{-\frac 1\alpha \gamma(x)t}\sigma(x)Me^{-A^\star t}\,dt,	
\end{equation}
and
\begin{equation}
\label{smf9-bis}
N_\alpha(x):=\int _0^\infty e^{-\gamma(x) t}\,[L_{\alpha}(x)\, \sigma^\star(x)+\sigma(x) L^\star_{\alpha}(x)]\,e^{-\gamma^\star(x)t}\,dt.	
\end{equation}

\end{Lemma}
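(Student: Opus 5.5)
Since $x$ is frozen, \eqref{smf4} is a linear SDE with additive noise in $\R^{d+n}$. The plan is to read off the invariant law from the stable linear drift and then identify its covariance blockwise through the Lyapunov equation. Write $Y(t)=(u(t),z(t))$, so that $dY(t)=G\,Y(t)\,dt+\Sigma\,dw(t)$ with
\[
G=\begin{pmatrix}-\frac1\alpha\gamma(x) & \frac1\alpha\sigma(x)\\ 0 & -A\end{pmatrix},\qquad \Sigma=\begin{pmatrix}0\\ B\end{pmatrix},
\]
and $Y(t)=e^{Gt}Y(0)+\int_0^te^{G(t-s)}\Sigma\,dw(s)$.

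\emph{Step 1: $G$ is Hurwitz.} The matrix $G$ is block upper triangular, so its spectrum is the union of the spectra of $-\gamma(x)/\alpha$ and $-A$. The second has negative real parts by Hypothesis \ref{hyp}, item 4. For the first, the energy identity
\[
\tfrac{d}{dt}\,|e^{-\gamma(x)t}y|^2=-2\,\tfrac{\gamma+\gamma^\star}{2}\,e^{-\gamma t}y\cdot e^{-\gamma t}y
\]
together with \eqref{smf12} gives $|e^{-\gamma(x)t}|\le e^{-\gamma_0t}$. Hence $|e^{Gt}|\le c\,e^{-\kappa t}$ for some $\kappa>0$, and all the integrals defining $M$, $L_\alpha$, $N_\alpha$ converge absolutely.

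\emph{Step 2: existence and uniqueness.} Because $e^{Gt}Y(0)\to 0$, for every initial datum the law of $Y(t)$ converges weakly to the centered Gaussian measure with covariance
\[
Q=\int_0^\infty e^{Gt}\Sigma\Sigma^\star e^{G^\star t}\,dt .
\]
This Gaussian measure is invariant: the Ornstein–Uhlenbeck semigroup maps $\mathcal N(0,Q)$ to $\mathcal N(0,\,e^{Gt}Qe^{G^\star t}+Q_t)$ with $Q_t=\int_0^t e^{Gs}\Sigma\Sigma^\star e^{G^\star s}\,ds$, and this equals $\mathcal N(0,Q)$. It is also unique. If $\nu$ is any invariant measure, then $\nu=\int P_t(y,\cdot)\,\nu(dy)$ for all $t$, and letting $t\to\infty$ with dominated convergence (the laws $P_t(y,\cdot)$ converge for each $y$) forces $\nu=\mathcal N(0,Q)$.

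\emph{Step 3: identification of the blocks.} Differentiating $e^{Gt}\Sigma\Sigma^\star e^{G^\star t}$ and integrating, $Q$ is the unique solution of
\[
GQ+QG^\star+\Sigma\Sigma^\star=0,
\]
with uniqueness because $G$ is Hurwitz. Writing $Q=\begin{pmatrix}N&L\\L^\star&M'\end{pmatrix}$ and expanding block by block:
\begin{enumerate}
\item[(i)] The $zz$ block is $-AM'-M'A^\star+BB^\star=0$, so $M'=\int_0^\infty e^{-As}BB^\star e^{-A^\star s}\,ds=M$. The second exponential in \eqref{smf10} should carry $A^\star$.
\item[(ii)] The $uz$ block is the Sylvester equation $\frac1\alpha\gamma L+LA^\star=\frac1\alpha\sigma M$. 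Its unique solution, since the spectra of $\gamma/\alpha$ and $-A^\star$ are disjoint, is $L=\frac1\alpha\int_0^\infty e^{-\gamma t/\alpha}\sigma Me^{-A^\star t}\,dt=L_\alpha(x)$. One checks this directly by differentiating the integrand in $t$.
\item[(iii)] The $uu$ block is $-\frac1\alpha(\gamma N+N\gamma^\star)+\frac1\alpha(\sigma L^\star+L\sigma^\star)=0$, i.e. $\gamma N+N\gamma^\star=L_\alpha\sigma^\star+\sigma L_\alpha^\star$. Its unique solution is $N=\int_0^\infty e^{-\gamma t}(L_\alpha\sigma^\star+\sigma L_\alpha^\star)e^{-\gamma^\star t}\,dt=N_\alpha(x)$.
\end{enumerate}
This gives \eqref{smf11}.

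The only step needing any care is Step 1, namely the uniform exponential decay of $e^{-\gamma(x)t}$ when $\gamma$ is not symmetric. It is handled by the energy estimate above using only the lower bound on the symmetric part in \eqref{smf12}. Everything else is linear algebra.
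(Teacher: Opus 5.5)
Your proposal is correct and follows essentially the same route as the paper. You write \eqref{smf4} as a linear SDE with block upper-triangular Hurwitz drift $\Gamma_\alpha(x)$, take the stationary covariance $\int_0^\infty e^{t\Gamma_\alpha}\Sigma\Sigma^\star e^{t\Gamma_\alpha^\star}\,dt$, and solve the Lyapunov equation block by block for $M$, then $L_\alpha$, then $N_\alpha$, while supplying details the paper only asserts (the Hurwitz property from \eqref{smf12}, and invariance and uniqueness of the Gaussian law). Your remark that the second exponential in \eqref{smf10} should be $e^{-A^\star s}$ is also right, and it agrees with \eqref{smf6} in the paper's own proof.
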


\begin{proof}
System 	\eqref{smf4} can be written as
\[d\Lambda(t)=\Gamma_{\alpha}(x)\,\Lambda(t)+\Sigma\,dw(t),\]
where
\[\Lambda(t):=\begin{pmatrix}
u(t)\\ \\z(t)	
\end{pmatrix}, \ \ \ \ \ \ \Gamma_{\alpha}(x):=\begin{pmatrix}
-\frac 1{\alpha}\, \gamma(x)  &   \frac 1{\alpha}\,\sigma(x)\\
&\\
0  &  -A	
\end{pmatrix},\ \ \ \ \ \Sigma:=\begin{pmatrix}
0\\  \\ B	
\end{pmatrix}.\]

Since we are assuming that the eigenvalues of both $\gamma(x)$ and $A$ have strictly positive real part, we have that the eigenvalues of the matrix $\Gamma_{\alpha}(x)$ have all negative real part. This implies that the matrix 
\[Q_{\alpha}(x)=\int_0^{+\infty} e^{t\, \Gamma_{\alpha}(x)}\, \Sigma \Sigma^\star\, e^{t\, \Gamma^\star_{\alpha}(x)}\,dt\]
is well defined and the Gaussian measure $\mu_{\alpha, x}:=\mathcal{N}(0,Q_{\alpha}(x))$ is the 
unique invariant measure for system  \eqref{smf4}.
Moreover, $Q_{\alpha}(x)$ can be determined as the unique solution $J$ of the Lyapunov equation
\begin{equation}\label{smf5}\Gamma_{\alpha}(x)\,J+J\, \Gamma_{\alpha}(x)^\star=-\Sigma \Sigma^\star.\end{equation}
Thus, in what follows we  determine the  blocks of the solution matrix 
\[J=\begin{pmatrix}
J_1  &   J_2\\
&\\
J_3 &  J_4	
\end{pmatrix}, \ \ \ \ \ \text{with}\ J_3=J_2^\star,\] by solving \eqref{smf5}. We start with $J_4$, which has to solve
\[(-A) J_4+J_4(-A)^\star=-BB^\star.\]
Hence
\begin{equation}\label{smf6}J_4=\int_0^{+\infty} e^{-A t}BB^\star e^{-A^\star t} \,dt.\end{equation}
For the block $J_2$, we can easily check that it has to solve the equation
\[-\frac 1\alpha \gamma(x)\, J_2+J_2 (-A)^\star=-\frac 1\alpha \sigma(x)\, J_4.\]
Thus, we get
\begin{equation}\label{smf7}J_2(x)=\frac 1\alpha \int_0^\infty e^{-\frac 1\alpha \gamma(x)t}\sigma(x)\left[\int_0^{+\infty} e^{-A s}BB^\star e^{-A^\star s}\,ds\right]e^{-A^\star t}\,dt.\end{equation}
Finally, for the block $J_1$, it solves the equation
\[\gamma(x)\,J_1+J_1\gamma^\star(x)=\sigma(x) J_2^\star+J_2 \sigma^\star(x),\]
so that
\begin{equation}\label{smf8}J_1(x)=\int _0^\infty e^{-\gamma(x) t}[\sigma(x) J_2^\star+J_2\, \sigma^\star(x)]e^{-\gamma^\star(x)t}\,dt.\end{equation}
By putting together \eqref{smf6}, \eqref{smf7} and \eqref{smf8}, we obtain \eqref{smf11}, with entries given by \eqref{smf10}, \eqref{smf9} and \eqref{smf9-bis}.
\end{proof}

\begin{Remark}
{\em      In case $A=I_{\mathbb{R}^n}$, we have  
\[M=-\frac 12 BB^\star,\]
so that
\[\begin{array}{l}
\ds{L_{\alpha}(x)=-\frac 1{2\alpha} \int_0^\infty e^{-t}e^{-\frac 1\alpha \gamma(x)t}\,dt\,\sigma(x)BB^\star=
-\frac 12 (\alpha\,I_{\mathbb{R}^d}+\gamma(x))^{-1}\sigma(x)BB^\star,}
\end{array}
\]
and
\[\begin{array}{l}
\ds{N_\alpha(x)=-\frac{1}{2}\int_0^\infty e^{-\gamma(x) t} \hat{N}_{\alpha}(x) e^{-\gamma^\star(x)t}\,dt,}	
\end{array}
\]
where
\[\hat{N}_\alpha(x):=(\alpha\,I_{\mathbb{R}^d}+\gamma(x))^{-1}\sigma(x)BB^\star\sigma^\star(x)+\sigma(x)BB^\star\sigma^\star(x)(\alpha\,I_{\mathbb{R}^d}+\gamma^\star(x))^{-1}.\]

	} 
 \hfill{$\Box$}
 \end{Remark}

In what follows, we will denote by $P_t^{\alpha,x}$ the transition semigroup associated with system \eqref{smf4}. Namely, for every Borel bounded function $\varphi:\mathbb{R}^d\times \mathbb{R}^n\to \mathbb{R}$
\[P_t^{\alpha,x}\varphi(u,z)=\mathbb{E}_{(u,z)}\,\varphi(\Lambda^{x}(t)),\ \ \ \ \ \ t\geq 0,\ \ \ \ \ (u,z) \in\,\mathbb{R}^n\times \mathbb{R}^d,\]
where $\Lambda^{x}(t)$ is the solution of system \eqref{smf4}.
In view of \eqref{smf12} and the fact that \[\inf \left\{\mathfrak{Re}\,\la,\ \lambda \in\,\text{spec}(A)\right\}=:\bar{\la}>0,\]
we have that for every Lipschitz continuous mapping $\varphi:\mathbb{R}^d\times \mathbb{R}^n\to \mathbb{R}$
\begin{equation}
	\label{smf13}
	\left\vert P_t^{\alpha,x}(u,z)-\int_{\mathbb{R}^d\times \mathbb{R}^n} \varphi(u^\prime,z^\prime)\,d\mu_{\alpha,x}(u^\prime,z^\prime)\right\vert \leq c\,e^{-\omega_\alpha t}\left(1+\vert u\vert+\vert z\vert\right)[\varphi]_{\text{\tiny{Lip}}}, 
\end{equation}
with
\begin{equation}
\label{smf14}
\omega_\alpha:=\min\left\{ \frac {\gamma_0} \alpha, \bar{\lambda}\right\}.	
\end{equation}
In particular, this implies that for every $T>0$ and $(u,z) \in\,\mathbb{R}^d\times \mathbb{R}^n$,
\begin{equation}
\label{smf15}
\sup_{t\geq 0,\, x \in\,\mathbb{R}^d}\mathbb{E}_{(u,z)}\left|\frac 1T\int_{t}^{T+t} \varphi(\Lambda^x(s))\,ds-\int_{\mathbb{R}^d\times \mathbb{R}^n} \varphi(u^\prime,z^\prime)\,d\mu_{\alpha,x}(u^\prime,z^\prime)\right\vert\leq \frac{1}{\sqrt{\omega_\alpha T}}\left(1+|u|+|z|\right)c_\varphi.	
\end{equation}
For a proof of this last statement, see e.g. \cite{cerrai-09}.

\subsection{Uniform bounds} \label{ss3.2}
According to \eqref{system1-bis}, we have
\begin{equation}\label{last2}\begin{array}{l}\ds{d x_\e(t)=(\gamma^{-1}\,b)(x_\epsilon(t))\,dt-\mu(\e)\,\gamma^{-1}(x_\e(t))\,du_\epsilon(t)+(\gamma^{-1}\sigma)(x_\epsilon(t))z_\e(t)dt}\\[14pt]
\ds{\quad \quad \quad \quad \quad -\mu(\e)\gamma^{-1}(x_\epsilon(t))\left(S_1(x_\epsilon(t))+S_2(x_\epsilon(t))\,u_\epsilon(t)\right)\,dt.}\end{array}\end{equation}
Now,
\begin{equation}
\label{last1}
\gamma^{-1}(x_\e(t))\,du_\epsilon(t)=d(\gamma^{-1}(x_\e)\,u_\epsilon)(t)-D\gamma^{-1}(x_\e(t))[u_\e(t)+(\gamma^{-1}b)(x_\epsilon(t))]u_\epsilon(t)\,dt,	
\end{equation}
and
\[\begin{array}{l}
\ds{(\gamma^{-1}\sigma)(x_\epsilon(t))z_\e(t)dt=-\epsilon\,(\gamma^{-1}\sigma)(x_\epsilon(t))A^{-1}dz_\e(t)+(\gamma^{-1}\sigma)(x_\epsilon(t))A^{-1}Bdw(t)}\\[14pt]
\ds{\quad \quad =-\e\,d\left((\gamma^{-1}\sigma)(x_\epsilon)A^{-1}z_\e\right)(t)+\e\,D(\gamma^{-1}\sigma)(x_\e(t))[u_\e(t)+(\gamma^{-1}b)(x_\epsilon(t))]A^{-1}z_\e(t)\,dt}\\[14pt]
\ds{\quad \quad \quad \quad \quad \quad \quad \quad +(\gamma^{-1}\sigma)(x_\epsilon(t))A^{-1}Bdw(t).}	
\end{array}\]
Thus, if we replace this this identity and identity \eqref{last1} into \eqref{last2}, we obtain
\begin{equation}\label{smf63}\begin{array}{l}
\ds{x_{\epsilon}(t)=\int_0^t (\gamma^{-1}b)(x_{\epsilon}(r))\,dr+\int_0^t(\gamma^{-1}\sigma)(x_{\epsilon}(r))A^{-1}Bdw(r)}\\[14pt]
\ds{\quad +\mu(\epsilon)\int_0^t	[D\gamma^{-1}(x_{\epsilon}(r))\,u_{\epsilon}(r)]\,u_{\epsilon}(r)\,dr+\epsilon\int_0^t [D(\gamma^{-1}\sigma)(x_{\epsilon}(r))\,u_{\epsilon}(r)]\,A^{-1}z_\e(r)\,dr}\\[14pt]
\ds{\quad \quad \quad \quad \quad \quad \quad \quad \quad \quad \quad \quad+\mathcal{R}^1_{\epsilon}(t)+\int_0^t\mathcal{R}^2_{\epsilon}(r)\,dr,}
\end{array}\end{equation}
where
\begin{equation}\label{smf69}\begin{array}{l}
\ds{\mathcal{R}^1_{\epsilon}(t):=-\mu(\epsilon)\,\gamma^{-1}(x_\e(t))\,u_\epsilon(t)-\epsilon\,(\gamma^{-1}\sigma)(x_\epsilon(t))A^{-1}z_\e(t),}
\end{array}\end{equation}
and
\begin{equation}\label{smf70}\begin{array}{l}\ds{\mathcal{R}^2_{\epsilon}(r):=-\mu(\epsilon)\, [D\gamma^{-1}(x_{\epsilon}(r))\,\gamma^{-1}(x_{\epsilon}(r))b(x_{\epsilon}(r))]\,u_{\epsilon}(r)}\\[14pt]
\ds{\quad \quad \quad \quad -\epsilon\, [D(\gamma^{-1}\sigma)(x_{\epsilon}(r))\,\gamma^{-1}(x_{\epsilon}(r))b(x_{\epsilon}(r))]\,A^{-1}z_\e(r)}\\[14pt]
\ds{\quad \quad\quad \quad\quad \quad \quad \quad-\mu(\epsilon)\,\sqrt{\e}\,\gamma^{-1}(x_{\epsilon}(r))\left( S_1(x_{\e}(t))+ S_2(x_{\e}(t))u_{\epsilon}(t)\right).}\\[14pt]
\end{array}\end{equation}

\begin{Lemma}\label{lemma4.5}
Under Hypothesis \ref{hyp}, for every $p\geq 1$ and $T>0$ we have
\begin{equation}
\label{smf31}
\sup_{\epsilon \in\,(	0,1)}\,\sup_{t \in\,[0,T]}\mathbb{E}\,\vert \sqrt{\e}\,v_{\e}(t)\vert^{2p}\leq c_{p,T}\left(\vert x_0\vert^{2p}+\vert v_0\vert^{2p}+1\right).
	\end{equation}
	and 
\begin{equation}
\label{smf25}
\sup_{\epsilon \in\,(	0,1)}\,\sup_{t \in\,[0,T]}\mathbb{E}\,\vert \sqrt{\e}\,u_{\e}(t)\vert^{2p}\leq c_{p,T}\left(\vert x_0\vert^{2p}+\vert v_0\vert^{2p}+1\right).
	\end{equation}
\end{Lemma}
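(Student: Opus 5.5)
The plan is a pathwise energy estimate for $v_\e$ followed by a Gronwall argument in expectation, after which \eqref{smf25} follows from \eqref{smf31} through the definition of $u_\e$. The key observation is that the kernel $\mu^{-1}e^{-\gamma_0(t-s)/\mu}$ has total mass at most $1/\gamma_0$, uniformly in $\mu$. Hence no relation between $\mu(\e)$ and $\e$ is needed at this stage, and the factor $\sqrt{\e}$ is only required to control the colored noise through \eqref{smf26}.

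\emph{Step 1 (pathwise bound for $v_\e$).} As in the proof of Theorem \ref{teo1}, I compute $\frac12\frac{d}{dt}|v_\e|^2$ from \eqref{system1}. I use the coercivity \eqref{smf12}, the boundedness of $\sigma$, the linear growth $|b(x)|\le c(1+|x|)$ coming from the Lipschitz continuity of $b$, and Young's inequality. This gives
\[
\frac{d}{dt}|v_\e(t)|^2\le -\frac{\gamma_0}{\mu}|v_\e(t)|^2+\frac{c}{\mu}\big(1+|x_\e(t)|^2+|z_\e(t)|^2\big),
\]
and therefore the analogue of \eqref{smf24}. I multiply that inequality by $\e$ and write $k_\mu(s):=\mu^{-1}e^{-\gamma_0 s/\mu}$, which satisfies $\int_0^\infty k_\mu\,ds=1/\gamma_0$. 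I then raise it to the power $p$ and apply Jensen's inequality with respect to the finite measure $k_\mu(t-s)\,ds$. This yields
\[
|\sqrt{\e}\,v_\e(t)|^{2p}\le c_p|v_0|^{2p}+c_p\int_0^t k_\mu(t-s)\big(1+\e^p|x_\e(s)|^{2p}+|\sqrt\e\, z_\e(s)|^{2p}\big)\,ds ,
\]
where I have used $\e<1$.

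\emph{Step 2 (control of $x_\e$ and Gronwall).} From $x_\e(s)=x_0+\int_0^s v_\e(r)\,dr$ and H\"older's inequality,
\[
\e^p|x_\e(s)|^{2p}\le c_{p}|x_0|^{2p}+c_{p,T}\int_0^s|\sqrt\e\, v_\e(r)|^{2p}\,dr .
\]
I take expectations in the bound of Step 1, insert this estimate, and use \eqref{smf26} for the term in $z_\e$. Setting $\varphi(t):=\sup_{s\le t}\mathbb{E}|\sqrt\e\, v_\e(s)|^{2p}$, which is finite by Theorem \ref{teo1}, the mass bound on $k_\mu$ gives
\[
\varphi(t)\le c_{p,T}\big(1+|x_0|^{2p}+|v_0|^{2p}\big)+c_{p,T}\int_0^t\varphi(r)\,dr ,
\]
with constants independent of $\e$ and $\mu$. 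Gronwall's lemma then yields \eqref{smf31}.

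\emph{Step 3 (passing to $u_\e$).} Since $u_\e=v_\e-(\gamma^{-1}b)(x_\e)$ and $\gamma^{-1}$ is bounded by \eqref{smf12}, we have
\[
|\sqrt\e\, u_\e|\le|\sqrt\e\, v_\e|+c\sqrt\e\,(1+|x_\e|).
\]
The moments of $\sqrt\e\,|x_\e|$ are controlled by the bound of Step 2 combined with \eqref{smf31}, so \eqref{smf25} follows.

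The only delicate point is Step 1. The constants must not deteriorate as $\mu\to0$ or as the ratio $\mu/\e$ varies over $[0,+\infty]$. This is exactly what Jensen's inequality against the normalized kernel $k_\mu$ achieves, rather than estimating the factor $1/\mu$ crudely. One must also make sure that the sign of the dissipative term $-\gamma(x)v\cdot v$ is used correctly.
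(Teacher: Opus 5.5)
Your proof is correct and follows essentially the paper's route: an energy inequality for $v_\e$ with dissipation rate $\gamma_0/\mu(\e)$, the $\mu$-uniform mass bound $\int_0^t \mu^{-1}e^{-\gamma_0(t-s)/\mu}\,ds\le 1/\gamma_0$, \eqref{smf26} for the noise term, and Gronwall through $x_\e=x_0+\int_0^\cdot v_\e$, followed by the definition of $u_\e$. The differences are cosmetic: the paper differentiates $|v_\e|^{2p}$ directly and runs Gronwall on $\mathbb{E}|v_\e|^{2p}$ with an $\e^{-p}$ forcing term before multiplying by $\e^p$, whereas you estimate $|v_\e|^2$ pathwise, apply Jensen/H\"older against the kernel, and rescale by $\sqrt{\e}$ before running Gronwall.
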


\begin{proof}

We have 
\begin{equation}\label{smf44}\begin{array}{l}
\ds{\frac 1{2p} \frac{d}{dt}	\vert v_{\epsilon}(t)\vert^{2p}=-\frac 1{\mu(\epsilon)} \vert v_{\epsilon}(t)\vert^{2(p-1)}\langle \gamma(x_{\epsilon}(t))v_{\epsilon}(t),v_{\epsilon}(t)\rangle}\\[14pt]
\ds{\quad \quad \quad +\frac 1{\mu(\epsilon)} \vert v_{\epsilon}(t)\vert^{2(p-1)}\Big(\langle b(x_{\epsilon}(t)),v_{\epsilon}(t)\rangle+\langle \sigma(x_{\epsilon}(t))z_\e(t),v_{\epsilon}(t)\rangle\Big)}\\[14pt]
\ds{\quad \quad \quad\quad \quad \quad\leq -\frac {\gamma_0}{2p\,\mu(\epsilon)}\vert v_{\epsilon}(t)\vert^{2p}+\frac{c_p}{\mu(\epsilon)}\left(1+\vert x_{\epsilon}(t)\vert^{2p}\right)+\frac{c_p}{\mu(\epsilon)\,\epsilon^p}\vert \sqrt{\epsilon}\,z_\epsilon(t)\vert^{2p}.}
\end{array}\end{equation}
In particular, this implies
\[\begin{array}{l}
\ds{\mathbb{E}\,\vert v_{\epsilon}(t)\vert^{2p}\leq e^{-\frac{\gamma_0}{\mu(\epsilon)}t}\vert v_0\vert^{2p}+\frac{c_p}{\mu(\epsilon)}\int_0^{t} e^{-\frac{\gamma_0}{\mu(\epsilon)} (t-s)}\left(1+\mathbb{E}\,\vert x_{\epsilon}(s)\vert^{2p}\right)\,ds}\\[14pt]
\ds{\quad \quad \quad \quad \quad \quad +\frac{c_p}{\mu(\epsilon)\,\epsilon^p}\int_0^{t} e^{-\frac{\gamma_0}{\mu(\epsilon)} (t-s)}\mathbb{E}\,\vert \sqrt{\epsilon}\,z_\epsilon(s)\vert^{2p}\,ds,}
\end{array}
\]
and thanks to \eqref{smf26} we conclude
\[\mathbb{E}\,\vert v_{\epsilon}(t)\vert^{2p}\leq \vert v_0\vert^{2p}+c_p\,\left(1+\sup_{s \in\,[0,t]}\mathbb{E}\,\vert x_{\epsilon}(s)\vert^{2p}\right)+\frac{c_{p,T}}{\epsilon^p}.\]
Moreover, since
\begin{equation}\label{smf30}\sup_{s \in\,[0,t]}\mathbb{E}\,\vert x_{\epsilon}(s)\vert^{2p}\leq c_p\,\vert x_0\vert^{2p}+c_{p,T}\int_{0}^t \mathbb{E}\,\vert v_{\epsilon}(s)\vert^{2p}\,dr,\end{equation}
we obtain
\[\mathbb{E}\,\vert v_{\epsilon}(t)\vert^{2p}\leq c_p\left(\vert x_0\vert^{2p}+\vert v_0\vert^{2p}+1\right)+c_{p,T}\,\int_{0}^t \vert v_{\epsilon}(s)\vert^{2p}\,dr+\frac{c_{p,T}}{\epsilon^p},\]
and  Gronwall's lemma gives \eqref{smf31}.

Now, recalling that
\[u_{\epsilon
}(t)=v_{\epsilon}(t)-(\gamma^{-1}b)(x_{\epsilon}(t)),\]
\eqref{smf31} implies
\[\mathbb{E}\,\vert u_{\epsilon}(t)\vert^{2p}\leq c_p(\vert v_0\vert^{2p}+ \vert x_0\vert^{2p}+1)+c_p\,\left(1+\sup_{s \in\,[0,t]}\mathbb{E}\,\vert x_{\epsilon}(s)\vert^{2p}\right)+\frac{c_{T,p}}{\epsilon^p}.\]
Therefore, combining this with \eqref{smf30} and \eqref{smf31}, we obtain  \eqref{smf25}. 
\end{proof}

\begin{Remark}
{\em  By proceeding as in the proof of Lemma \ref{lemma4.5} above, due to \eqref{smf44} we have
\[\vert v_{\epsilon}(t)\vert^{2p}\leq \vert v_0\vert^{2p}+c_p\,\left(1+\sup_{s \in\,[0,t]}\vert x_{\epsilon}(s)\vert^{2p}\right)+c_{p}\sup_{s \in\,[0,t]}\vert \sqrt{\epsilon}\,z_\epsilon(s)\vert^{2p}\,\e^{-p}.\]
Now, iIt is possible to show that for every $T>0$, $p\geq 1$  and $\delta>0$ there exists some constant $c_{p, \delta, T}>0$ such that
\begin{equation}
\label{smf43}
\mathbb{E}\,\sup_{t \in\,[0,T]}\vert \sqrt{\epsilon}\,z_\epsilon(t)\vert^p\leq c_{p, \delta, T}\,\epsilon^{-\delta}.	
\end{equation}
In particular,  for every $\delta>0$ we get
\begin{equation}\label{smf50}\mathbb{E}\sup_{s \in\,[0,t]}\vert v_{\epsilon}(t)\vert^{2p}\leq \vert v_0\vert^{2p}+c_p\,\left(1+\mathbb{E}\sup_{s \in\,[0,t]}\vert x_{\epsilon}(s)\vert^{2p}\right)+c_{p, \delta, T}\,\e^{-(p+\delta)}.\end{equation}
Thanks to \eqref{smf30}, by proceeding as in the proof of Lemma \ref{lemma4.5}, this allows to conclude that for every $\delta>0$ and $\epsilon \in\,(0,1)$
\begin{equation}
\label{smf45-bis}
\mathbb{E}\sup_{t \in\,[0,T]}\vert \sqrt{\epsilon}\,v_{\e}(t)\vert^{2p}\leq c_{p, \delta, T}\left(\vert x_0\vert^{2p}+\vert v_0\vert^{2p}+1\right)\,\epsilon^{-\delta},
\end{equation}
and 
\begin{equation}
\label{smf45}
\mathbb{E}\sup_{t \in\,[0,T]}\vert \sqrt{\epsilon}\,u_{\e}(t)\vert^{2p}\leq c_{p, \delta, T}\left(\vert x_0\vert^{2p}+\vert v_0\vert^{2p}+1\right)\,\epsilon^{-\delta}.	
\end{equation}
	
}
\hfill{$\Box$}	
\end{Remark}

All what we have seen above gives uniform bounds for $\sqrt{\e}\, v_{\epsilon,\mu}$ and $\sqrt{\epsilon}\, u_{\epsilon,\mu}$. If $\mu(\e)/\epsilon\to \alpha \in\,[0,+\infty)$, then the same bounds hold for $\sqrt{\mu(\epsilon)}\, v_{\epsilon}$ and $\sqrt{\mu(\epsilon)}\,u_{\epsilon}$ as well. However, when $\alpha=0$ we need a new proof for those bounds.
\begin{Lemma}
Under Hypothesis \ref{hyp}, for every $p\geq 1$  we have
\begin{equation}
\label{smf33}
\sup_{\epsilon\in\,(	0,1)}\,\sup_{t \in\,[0,T]}\mathbb{E}\,\vert \sqrt{\mu(\epsilon)}\,u_{\e}(t)\vert^{2p}\leq c_{p,T},  \ \ \ \ \ \    \sup_{\epsilon\in\,(	0,1)}\,\sup_{t \in\,[0,T]}\mathbb{E}\,\vert \sqrt{\mu(\epsilon)}\,u_{\e}(t)\vert^{2p}\leq c_{p,T}.
	\end{equation}
	Moreover, for every $\delta>0$ and $\epsilon \in\,(0,1)$  we have
	\begin{equation}
\label{smf46}
\mathbb{E}\sup_{t \in\,[0,T]}\,\vert \sqrt{\mu(\epsilon)}\,u_{\e}(t)\vert^{2}\leq c_{\delta, T}\,\mu(\e)^{-\delta},\ \ \ \ \ \ \mathbb{E}\sup_{t \in\,[0,T]}\,\vert \sqrt{\mu(\epsilon)}\,u_{\e}(t)\vert^{2}\leq c_{\delta, T}\,\mu(\e)^{-\delta}.	
\end{equation}

\end{Lemma}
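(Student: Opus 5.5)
We prove the bounds for $\sqrt{\mu(\epsilon)}\,v_\epsilon$ first and deduce those for $\sqrt{\mu(\epsilon)}\,u_\epsilon$ from $u_\epsilon=v_\epsilon-(\gamma^{-1}b)(x_\epsilon)$, as in Lemma \ref{lemma4.5}. When $\alpha\in(0,+\infty]$ we have $\mu(\epsilon)\lesssim\epsilon$ along the family or better, so it is the regime $\mu(\epsilon)\ll\epsilon$ that requires a new argument. The pathwise energy estimate \eqref{smf44} is too crude there, since it produces the factor $\epsilon^{-p}$ from $|z_\epsilon|^{2p}$. Instead we use the linear structure of the velocity equation with frozen friction.

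Write $v=v_\epsilon$ and $x=x_\epsilon$, and let $\Phi(t,s)$ be the evolution operator of $\mu\,\dot\phi=-\gamma(x(t))\phi$. Since $\gamma+\gamma^\star\geq 2\gamma_0$, we have $|\Phi(t,s)|\le e^{-\gamma_0(t-s)/\mu}$ pathwise. Then
\[
v(t)=\Phi(t,0)v_0+\frac1\mu\int_0^t\Phi(t,s)b(x(s))\,ds+\frac1\mu\int_0^t\Phi(t,s)\sigma(x(s))z_\epsilon(s)\,ds .
\]
The first two terms are bounded by $|v_0|+c(1+\sup_{s\le t}|x(s)|)$. The key step is to show that the third term $I(t)$ satisfies $\mathbb{E}|I(t)|^{2p}\lesssim \mu^{-p}$.

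For $I(t)$ we substitute $z_\epsilon\,ds=-\epsilon A^{-1}dz_\epsilon+A^{-1}B\,dw$ and integrate by parts in $s$, using $\partial_s\Phi(t,s)=\mu^{-1}\Phi(t,s)\gamma(x(s))$. This gives
\[
I(t)=\frac1\mu\int_0^t\Phi(t,s)\sigma(x(s))A^{-1}B\,dw(s)-\frac{\epsilon}{\mu}\Big[\Phi(t,s)\sigma(x(s))A^{-1}z_\epsilon(s)\Big]_0^t+\frac{\epsilon}{\mu}\int_0^t\Big(\frac1\mu\Phi\gamma\sigma+\Phi D\sigma[v]\Big)A^{-1}z_\epsilon\,ds .
\]
Because $\Phi(t,s)$ is not adapted, the stochastic integral has to be handled carefully. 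The cleaner route is to avoid $\Phi$ altogether and apply It\^o's formula to the process $\mu v+\epsilon\,\sigma(x)A^{-1}z_\epsilon$. Alternatively, one can work with the pair $(\sqrt\mu v,\sqrt\epsilon z_\epsilon)$ through a Lyapunov functional $V=|\sqrt\mu v|^2+\kappa\,\mu\langle v,\sigma A^{-1}z_\epsilon\rangle\cdot\epsilon/\mu$ plus a quadratic form in $z$, mimicking the invariant covariance $Q_\alpha$.

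The direct computation is as follows. Set $w:=\mu v+\epsilon\,\sigma(x)A^{-1}z_\epsilon$. Then
\[
dw=\big(b-\gamma v+\epsilon D\sigma[v]A^{-1}z_\epsilon\big)dt+\sigma A^{-1}B\,dw(t).
\]
The dangerous term $\sigma z_\epsilon\,dt$ cancels, leaving a dissipative SDE. Substituting $v=(w-\epsilon\sigma A^{-1}z_\epsilon)/\mu$ turns the drift into
\[
-\frac{\gamma}{\mu}w+\frac{\epsilon}{\mu}\gamma\sigma A^{-1}z_\epsilon+\frac{\epsilon}{\mu}D\sigma[w]A^{-1}z_\epsilon-\frac{\epsilon^2}{\mu}D\sigma[\sigma A^{-1}z_\epsilon]A^{-1}z_\epsilon .
\]
Applying It\^o to $|w|^{2p}$, the term $\frac{\epsilon}{\mu}D\sigma[w]A^{-1}z_\epsilon$ is a perturbation of the friction of size $\frac{\epsilon}{\mu}|z_\epsilon|\sim\sqrt\epsilon/\mu\cdot|\sqrt\epsilon z_\epsilon|$, and it is the main obstacle. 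It is absorbed by $\gamma_0/\mu$ only where $\sqrt\epsilon|\sqrt\epsilon z_\epsilon|$ is small, which holds off an event of small probability by \eqref{smf43}.

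A cleaner route to try first is to skip this perturbative treatment and instead apply It\^o to the quadratic form $\mu|v|^2+2\epsilon\langle v,\sigma A^{-1}z_\epsilon\rangle+\epsilon\,\langle Kz_\epsilon,z_\epsilon\rangle$ for a suitable positive $K$. Here the cross term $\langle\sigma z_\epsilon,v\rangle$ is cancelled exactly, and the remaining terms are controlled by $\gamma_0|v|^2$, by $|\sqrt\epsilon z_\epsilon|^2$ via \eqref{smf26}, and by the Lipschitz bounds on $\sigma$. Taking powers, expectations, Gronwall, and combining with \eqref{smf30} gives \eqref{smf33}. For \eqref{smf46}, we keep the supremum, bound the martingale term by BDG, and use \eqref{smf43} in place of \eqref{smf26}, which yields the $\mu(\epsilon)^{-\delta}$ loss as in \eqref{smf45}.
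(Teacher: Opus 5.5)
You are using the same device as the paper. Your $w=\mu v_\epsilon+\epsilon\,\sigma(x_\epsilon)A^{-1}z_\epsilon$ is exactly $\sqrt{\mu(\epsilon)}\,\Lambda_\epsilon$ with $\Lambda_\epsilon$ from \eqref{smf61}. Absorbing the friction perturbation $\frac{\epsilon}{\mu}D\sigma[w]A^{-1}z_\epsilon$ off an event of probability $O(\epsilon^k)$ is the paper's $D_\epsilon(t)$ argument.

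\textbf{The gap: you aim it at the wrong regime.} As written, the case that needs it is never treated, and the case you treat with it does not close. By Lemma \ref{lemma4.5},
\[
\mathbb{E}|\sqrt{\mu(\epsilon)}\,v_\epsilon(t)|^{2p}=(\mu(\epsilon)/\epsilon)^p\,\mathbb{E}|\sqrt{\epsilon}\,v_\epsilon(t)|^{2p}\le c\,(\mu(\epsilon)/\epsilon)^p .
\]
So every regime with $\mu(\epsilon)\lesssim\epsilon$, including $\mu\ll\epsilon$, is immediate. The factor $\epsilon^{-p}$ in \eqref{smf44} is harmless there, because it gets multiplied by $\mu^p$. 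This bound is useless exactly when $\mu(\epsilon)/\epsilon\to\infty$. Your claim ``when $\alpha\in(0,+\infty]$ we have $\mu(\epsilon)\lesssim\epsilon$'' wrongly places that case among the easy ones.

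Conversely, your $w$-argument fails for $\mu\ll\epsilon$. Against dissipation $\gamma_0/\mu$, the drift term $\frac{\epsilon}{\mu}\gamma\sigma A^{-1}z_\epsilon$ makes $w$ of size $\epsilon|z_\epsilon|$. Then both $w/\sqrt{\mu}$ and the correction $\sqrt{\epsilon/\mu}\,\sigma A^{-1}\sqrt{\epsilon}z_\epsilon$ in $\sqrt{\mu}\,v=(w-\epsilon\sigma A^{-1}z_\epsilon)/\sqrt{\mu}$ have $2p$-th moments of order $(\epsilon/\mu)^p\to\infty$. The argument closes only when $\epsilon\lesssim\mu$. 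So swap the roles: use Lemma \ref{lemma4.5} for $\mu\lesssim\epsilon$, and your $w$ for $\mu\gg\epsilon$. That is the paper's proof.

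\textbf{Two secondary issues.}

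First, your ``cleaner route'' does not avoid the perturbative step. Differentiating $\sigma(x_\epsilon)$ in the cross term $2\epsilon\langle v,\sigma(x)A^{-1}z_\epsilon\rangle$ produces $2\epsilon\langle v,(D\sigma(x)v)A^{-1}z_\epsilon\rangle$. This is of size $\epsilon|z_\epsilon|\,|v|^2$, the same friction perturbation, and it is not pathwise dominated by $\gamma_0|v|^2$. Moreover, with a fixed $K$ the It\^o correction of $\epsilon\langle Kz_\epsilon,z_\epsilon\rangle$ is $\epsilon^{-1}\mathrm{Tr}(B^\star KB)\,dt$. If you run Gronwall on the whole form at the common rate $\gamma_0/\mu$, this term gives a bound of order $\mu/\epsilon$, which diverges in the relevant regime. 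The correct weight is $\epsilon/\mu$, that is, the term $\frac{\epsilon^2}{\mu}|\sigma(x)A^{-1}z_\epsilon|^2$, and with it your form becomes $|\Lambda_\epsilon|^2$.

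Second, for \eqref{smf46} a plain BDG estimate is not enough. If you drop the dissipation, the forcing $\frac{1}{\mu}\int_0^t(\cdots)\,ds$ is of order $t/\mu$. If you keep it through the weight $e^{-\gamma_0(t-s)/\mu}$, the stochastic term becomes a stochastic convolution, which is not a martingale. The paper handles this with the factorization method, at the cost of a factor $\mu(\epsilon)^{-1/p}$ with $p$ arbitrary. Together with \eqref{smf43}, this is where the $\mu(\epsilon)^{-\delta}$ loss comes from.
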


\begin{proof}
As we have seen above, it remains to prove \eqref{smf33} and \eqref{smf46} under the assumption
\[\frac {\mu(\e)}{\epsilon}\to \infty.\]

{\em Step 1. Proof of \eqref{smf33} - }We have
\[\begin{array}{l}
\ds{d(\sqrt{\mu(\epsilon)}\,v_{\epsilon})(t)=-\frac{1}{\mu(\epsilon)}\,\gamma(x_{\epsilon}(t))\,\sqrt{\mu(\epsilon)}\,v_{\epsilon}(t)\,dt+\frac{1}{\sqrt{\mu(\epsilon)}}\,b(x_{\epsilon}(t))\,dt}\\[14pt]
\ds{\quad \quad \quad \quad \quad \quad \quad \quad \quad +\frac 1{\sqrt{\mu(\epsilon)}}\,\sigma(x_{\epsilon}(t))\,A^{-1}B\,dw(t)-\sqrt{\frac{\e}{\mu(\epsilon)}}\,\sigma(x_{\epsilon}(t))A^{-1}d (\sqrt{\e}\,z_\e)(t).}	
\end{array}
 \]
Thus, since 
\[\sigma(x_{\epsilon}(t))A^{-1}d(\sqrt{\e}\,z_\e)(t)=d(\sigma(x_{\epsilon})A^{-1}(\sqrt{\e}\,z_\e))(t)-D\sigma(x_{\epsilon}(t))v_{\epsilon}(t)A^{-1}(\sqrt{\e}z_\e)(t)\,dt,\]
if we define
\begin{equation}\label{smf61}\Lambda_{\e}(t):=\sqrt{\mu(\epsilon)}\, v_{\epsilon}(t)+\sqrt{\frac{\e}{\mu(\epsilon)}}\sigma(x_{\epsilon}(t))A^{-1}(\sqrt{\e}\,z_\e)(t),\end{equation}
we get
\[\begin{array}{l}
\ds{d\Lambda_{\e}(t)=\frac 1{\mu(\epsilon)}\,\Big[-\,\gamma(x_{\epsilon}(t))\Lambda_{\e}(t)+\sqrt{\frac{\e}{\mu(\epsilon)}}\,\gamma(x_{\epsilon}(t))\sigma(x_{\epsilon}(t))A^{-1}(\sqrt{\e}z_\e)(t)}\\[14pt]
\ds{+\sqrt{\mu(\epsilon)}\,b(x_{\epsilon}(t))+\sqrt{\e}\,D\sigma(x_{\epsilon}(t))(\sqrt{\mu(\epsilon)}\,v_{\epsilon})(t)A^{-1}(\sqrt{\e}z_\e)(t)\Big]\,dt+\frac 1{\sqrt{\mu(\epsilon)}}\,\sigma(x_{\epsilon}(t))\,A^{-1}B\,dw(t).}	
\end{array}\]
Thanks to It\^o's formula, we get
\[\begin{array}{l}
\ds{\frac 1{2p}d\vert \Lambda_{\e}(t)\vert ^{2p}=\frac{1}{\mu(\epsilon)}\vert \Lambda_{\e}(t)\vert ^{2(p-1)}\Big(-\langle\gamma(x_{\epsilon}(t))\Lambda_{\e}(t),\Lambda_{\e}(t)\rangle}\\[14pt]
\ds{\quad +\sqrt{\frac{\e}{\mu(\epsilon)}}\,\langle\gamma(x_{\epsilon}(t))\sigma(x_{\epsilon}(t))A^{-1}(\sqrt{\e}z_\e)(t),\Lambda_{\e,\mu}(t)\rangle+\sqrt{\mu(\epsilon)}\,\langle b(x_{\epsilon}(t)),\Lambda_{\e}(t)\rangle}\\[14pt]
\ds{\quad \quad \quad \quad +\sqrt{\e}\,\langle D\sigma(x_{\epsilon}(t))(\sqrt{\mu(\epsilon)}\,v_{\epsilon})(t)A^{-1}(\sqrt{\e}z_\e)(t),\Lambda_{\e}(t)\rangle}\\[14pt]
\ds{\quad +\frac 1{2} \text{Tr}\Big[(\sigma^\star\sigma)(x_{\epsilon}(t))\,(A^{-1}B)(A^{-1}B)^\star(I+2(p-1)\frac{\Lambda_{\e}(t)\otimes \Lambda_{\e}(t)}{\vert \Lambda_{\e}(t)\vert^2}\Big]\Big)\,dt}\\[14pt]
\ds{\quad \quad \quad \quad \quad +\frac 1{\sqrt{\mu(\epsilon)}}\,\vert \Lambda_{\e}(t)\vert ^{2(p-1)}\,\langle \sigma(x_{\epsilon}(t))\,A^{-1}B\,dw(t),\Lambda_{\e}(t)\rangle.}	
\end{array}\]
Thus, 
\begin{equation}\label{smf36}\begin{array}{l}
\ds{\frac 1{2p}d\vert \Lambda_{\e}(t)\vert ^{2p}\leq -\frac{\gamma_0}{p\mu(\epsilon)}\,\vert\Lambda_{\e}(t)\vert^{2p}\,dt+\frac{c_p}{\mu(\epsilon)}\,\left(\frac{\e}{\mu(\epsilon)}\right)^p\,\vert \sqrt{\e}z_\e(t)\vert^{2p}\,dt}\\[14pt]
\ds{\quad \quad \quad +\frac{c_p}{\mu(\epsilon)}\left(1+\vert \sqrt{\mu(\epsilon)}\,x_{\epsilon}(t)\vert^{2p}\right)\,dt+\bar{c}_p\,\frac{\epsilon^{\,p}}{\mu(\epsilon)}\,\vert \sqrt{\mu(\epsilon)}\,v_{\epsilon}(t)\vert^{2p} \,\vert\sqrt{\e}z_\e(t)\vert^{2p}\,dt}\\[14pt]
\ds{\quad \quad \quad \quad \quad \quad +\frac 1{\sqrt{\mu(\epsilon)}}\,\vert \Lambda_{\e}(t)\vert ^{2(p-1)}\,\langle \sigma(x_{\epsilon}(t))\,A^{-1}B\,dw(t),\Lambda_{\e}(t)\rangle.}\end{array}\end{equation}
Now, recalling how $\Lambda_{\e}(t)$ is defined in \eqref{smf61}, we have
\begin{equation}\label{smf34}\begin{array}{l}
\ds{\bar{c}_p\,\frac{\e^{\,p}}{\mu(\epsilon)}\,\vert \sqrt{\mu(\epsilon)}\,v_{\epsilon}(t)\vert^{2p} \,\vert\sqrt{\e}z_\e(t)\vert^{2p}\leq \hat{c}_p\,\frac{\e^{\,p}}{\mu(\epsilon)}\,\vert \Lambda_{\epsilon}(t)\vert^{2p} \,\vert\sqrt{\e}z_\e(t)\vert^{2p}+c_p\,\frac{\e^{2p}}{\mu(\e)^{p+1}}\,\vert \sqrt{\e}z_\e(t)\vert^{4p}}\\[14pt]
\ds{\quad \quad \leq \frac{\gamma_0}{2p\,\mu(\epsilon)}\,\vert\Lambda_{\e}(t)\vert^{2p}+\hat{c}_p\,\frac{\e^{\,p}}{\mu(\epsilon)}\,\vert \Lambda_{\epsilon}(t)\vert^{2p} \,\vert\sqrt{\e}z_\e(t)\vert^{2p}\,\mathbb{I}_{D_{\e}(t)}+c_p\,\frac{\e^{2p}}{\mu(\e)^{p+1}}\,\vert \sqrt{\e}z_\e(t)\vert^{4p},}
\end{array}\end{equation}
where
\[D_{\e}(t):=\left\{\hat{c}_p\,\e^{\,p}\,\vert\sqrt{\e}z_\e(t)\vert^{2p}\geq \frac{ \gamma_0}{2p}\right\}.\]
Notice that, thanks to \eqref{smf26}, for every $k> 0$ there exists $c_k>0$ such that
\begin{equation}
\label{smf40}
\sup_{t \in\,[0,T]}\,\mathbb{P}(D_{\e}(t))\leq c_k \epsilon^k.
\end{equation}
In particular, if we replace \eqref{smf34} into \eqref{smf36}, we obtain
\begin{equation}\label{smf37}\begin{array}{l}
\ds{d\vert \Lambda_{\e}(t)\vert ^{2p}\leq -\frac{\gamma_0}{\mu(\epsilon)}\,\vert\Lambda_{\e}(t)\vert^{2p}\,dt+\frac{c_p}{\mu(\epsilon)}\,\left(\frac{\e}{\mu(\epsilon)}\right)^p\,\vert \sqrt{\e}z_\e(t)\vert^{2p}\,dt+\frac{c_p}{\mu(\epsilon)}\left(1+\vert \sqrt{\mu(\epsilon)}\,x_{\epsilon}(t)\vert^{2p}\right)\,dt}\\[14pt]

\ds{\quad \quad +\hat{c}_p\,\frac{\e^{\,p}}{\mu(\epsilon)}\,\vert \Lambda_{\epsilon}(t)\vert^{2p} \,\vert\sqrt{\e}z_\e(t)\vert^{2p}\,\mathbb{I}_{D_{\e}(t)}\,dt+c_p\,\frac{\e^{2p}}{\mu(\e)^{p+1}}\,\vert \sqrt{\e}z_\e(t)\vert^{4p}\,dt}\\[14pt]
\ds{\quad \quad \quad \quad \quad \quad \quad \quad +\frac 1{\sqrt{\mu(\epsilon)}}\,\vert \Lambda_{\e}(t)\vert ^{2(p-1)}\,\langle \sigma(x_{\epsilon}(t))\,A^{-1}B\,dw(t),\Lambda_{\e}(t)\rangle,}\\[14pt]
\ds{\quad }	
\end{array}\end{equation}
and this yields
\begin{equation}\label{smf38}\begin{array}{l}
\ds{\mathbb{E}\vert \Lambda_{\e}(t)\vert ^{2p}\leq e^{-\frac{\gamma_0}{\mu(\epsilon)}t}\,\mu(\e)^p\vert v_0\vert^{2p} +\frac{c_p}{\mu(\epsilon)}\int_0^t e^{-\frac{ \gamma_0}{\mu(\epsilon)}(t-s)}\,\left(1+\left(\frac{\e}{\mu(\epsilon)}\right)^{2p}\,\mathbb{E}\vert \sqrt{\e}z_\e(s)\vert^{4p}\right)\,ds}\\[14pt]
\ds{\quad \quad \quad \quad  +\frac{c_p}{\mu(\epsilon)}\,\int_0^te^{-\frac{\gamma_0}{\mu(\epsilon)}(t-s)}\mathbb{E}\vert \sqrt{\mu(\epsilon)}\,x_{\epsilon}(s)\vert^{2p}\,ds}\\[14pt]
\ds{\quad \quad \quad \quad \quad \quad \quad \quad  +\hat{c}_p\,\frac{\e^p}{\mu(\epsilon)}\int_0^te^{-\frac{\gamma_0}{\mu(\epsilon)}(t-s)}\,\mathbb{E}\left(\vert\sqrt{\e}z_\e(s)\vert^{2p}\,\vert\Lambda_{\e}(s)\vert^{2p}\,\mathbb{I}_{D_{\e}(s)}\right)\,ds.}
\end{array}\end{equation}
According to \eqref{smf26} and  \eqref{smf40}, with $k$ sufficiently large, depending on $p$, we have
\[\begin{array}{l}
\ds{\mathbb{E}\left(\vert\sqrt{\e}z_\e(s)\vert^{2p}\,\vert\Lambda_{\e}(s)\vert^{2p}\,\mathbb{I}_{D_{\e}(s)}\right)\leq \left(\mathbb{E}\vert\sqrt{\e}z_\e(s)\vert^{8p}\right)^{1/4}\left(\mathbb{E}\vert\Lambda_{\e}(s)\vert^{4p}\right)^{1/2}\left(\mathbb{P}(D_{\e}(s))\right)^{1/4}}\\[14pt]
\ds{\quad \quad \leq c_p\, \epsilon^{\,p}	\left(\left(\frac {\mu(\epsilon)} \epsilon\right)^{2p}\mathbb{E}\vert \sqrt{\epsilon}\,v_{\e}(s)\vert^{4p}+\left(\frac{\e}{\mu(\epsilon)}\right)^{2p}\mathbb{E}\vert \sqrt{\e}z_\e(s)\vert^{4p} \right)^{1/2}.}
\end{array}\]
Then, thanks again to \eqref{smf26} and to \eqref{smf31}, since we are assuming $\e/\mu(\epsilon)\to 0$, we conclude
\[\mathbb{E}\left(\vert\sqrt{\e}z_\e(s)\vert^{2p}\,\vert\Lambda_{\e}(s)\vert^{2p}\,\mathbb{I}_{D_{\e}(s)}\right)\leq c_{p,T}\, \epsilon^{p}\left(\frac{\mu(\epsilon)}{\e}+\frac{\e}{\mu(\epsilon)}\right)^p\leq c_{p,T},
\]
and if we plug the inequality above into \eqref{smf38}, thanks again to \eqref{smf26},
we obtain
\begin{equation}\label{smf42}\begin{array}{l}
\ds{\mathbb{E}\vert \Lambda_{\e}(t)\vert ^{2p}\leq \vert\sqrt{\mu(\epsilon)}\, v_0\vert^{2p} +c_{p,T}\,\left(1+\sup_{s \in\,[0,t]}\mathbb{E}\vert \sqrt{\mu(\epsilon)}\,x_{\epsilon}(s)\vert^{2p}\right)+c_{p,T}.}\end{array}\end{equation}
In particular, since
\[\vert \sqrt{\mu(\epsilon)}\,v_{\epsilon}(t)\vert^{2p}\leq c_p\left(\vert \Lambda_{\e}(t)\vert ^{2p}+\vert \sqrt{\e} z_\e(t)\vert^{2p}\right),\]
from \eqref{smf26}, \eqref{smf30} and \eqref{smf42} we obtain 
\[\mathbb{E}\vert \sqrt{\mu(\epsilon)}\,v_{\e}(t)\vert ^{2p}\leq c_{p,T}\,\left(1+\sup_{s \in\,[0,t]}\mathbb{E}\vert \sqrt{\mu(\epsilon)}\,x_{\epsilon}(s)\vert^{2p}\right)\leq c_{p,T} \left(1+\int_0^t\mathbb{E}\vert \sqrt{\mu(\epsilon)}\,v_{\epsilon}(s)\vert^{2p}\,ds\right).\] 
The Gronwall's lemma implies
\[\mathbb{E}\vert \sqrt{\mu(\epsilon)}\,v_{\e}(t)\vert ^{2p}\leq c_{p,T},\]
and then, by proceeding as at the end of Lemma \ref{lemma4.5}, we get \eqref{smf33} for $u_{\e,\mu}$.

{\em Step 2. Proof of \eqref{smf46} - } Thanks to \eqref{smf37}  we have

\begin{equation}\label{smf48}\begin{array}{l}
\ds{\sup_{s \in\,[0,t]}\vert \Lambda_{\e}(t)\vert ^2\leq \vert\Lambda_{\e}(0)\vert^2 +c\left(1+\frac{\e}{\mu(\epsilon)}\,\sup_{s \in\,[0,t]}\vert \sqrt{\e}z_\e(s)\vert^2+\frac{\e^2}{\mu(\epsilon)}\,\sup_{s \in\,[0,t]}\vert	\sqrt{\e}z_\e(s)\vert^4\right)}\\[14pt]
\ds{ \quad \quad +c\left(1+\sup_{s \in\,[0,t]}\vert \sqrt{\mu(\epsilon)}\,x_{\epsilon}(s)\vert^2\right) +\bar{c}\,\epsilon\,\sup_{s \in\,[0,t]}\left(\vert\sqrt{\e}z_\e(s)\vert^2\,\vert\Lambda_{\e}(s)\vert^2\,\mathbb{I}_{D_{\e}(s)}\right)}\\[15pt]
\ds{\quad \quad \quad \quad +\frac 1{\sqrt{\mu(\epsilon)}}\,\sup_{s \in\,[0,t]}\left|\int_0^s e^{-\frac{\gamma_0}{\mu(\epsilon)}(s-r)}\langle \sigma(x_{\epsilon}(r))\,A^{-1}B\,dw(r),\Lambda_{\e}(r)\rangle\right|}
\end{array}\end{equation}

We have
\[\begin{array}{l}
\ds{\mathbb{E}\,\sup_{s \in\,[0,t]}\left(\vert\sqrt{\e}z_\e(s)\vert^2\,\vert\Lambda_{\e}(s)\vert^2\,\mathbb{I}_{D_{\e}(s)}\right)}\\[14pt]
\ds{\quad \quad \quad \leq \left(\mathbb{E}\sup_{s \in\,[0,t]}\vert\sqrt{\e}z_\e(s)\vert^8\right)^{1/4}\left(\mathbb{E}\sup_{s \in\,[0,t]}\vert\Lambda_{\e}(s)\vert^4\right)^{1/2}\left(\mathbb{P}(D^\star_{\e}(t)\right)^{1/4},}	
\end{array}\]
where
\[D^\star_{\e}(t):=\left\{\bar{c}\,\sqrt{\e}\,\sup_{s \in\,[0,t]}\vert\sqrt{\e}z_\e(s)\vert\geq \frac{\gamma_0}{2}\right\}.\]
According to \eqref{smf43}
we have that for every $k\geq 1$ there exists $c_{k,t}>0$ such that 
\[\mathbb{P}(D^\star_{\e}(t))\leq c_{k,t}\, \e^k.\]
Thus, thanks again to \eqref{smf43} and to  \eqref{smf50} and \eqref{smf45-bis}, by taking $k$ large enough in the inequality above, we conclude
\begin{equation}\label{smf51}\sup_{\e\in\,(0,1)}\mathbb{E}\,\sup_{s \in\,[0,t]}\left(\vert\sqrt{\e}z_\e(s)\vert^2\,\vert\Lambda_{\e}(s)\vert^2\,\mathbb{I}_{D_{\e}(s)}\right)\leq c_T,\ \ \ \ t \in\,[0,T].\end{equation}

Next, for every $\beta \in\,(0,1/2)$ we have 	
\[\begin{array}{l}
\ds{J_{\epsilon}(s):=\int_0^s e^{-\frac{\gamma_0}{\mu(\epsilon)}(s-r)}\langle \sigma(x_{\epsilon}(r))\,A^{-1}B\,dw(r),\Lambda_{\e}(r)\rangle
=c_\beta \int_0^s e^{-\frac{\gamma_0}{\mu(\epsilon)}(s-r)}(s-r)^{\beta-1}Y_\beta(r)\,dr,
}
\end{array}\]
where
\[Y_\beta(r):=\int_0^r e^{-\frac{\gamma_0}{\mu(\epsilon)}(r-\rho)}(r-\rho)^{-\beta}\langle \sigma(x_{\epsilon}(\rho))\,A^{-1}B\,dw(\rho),\Lambda_{\e}(\rho)\rangle.\]
Thus, for every $p>1/\beta$, we get
\[\begin{array}{l}
\ds{\vert J_{\epsilon}(s)\vert^{p}\leq c_{p, \beta}\left(\int_0^s	e^{-\frac{\gamma_0 p}{\mu(\epsilon)(p-1)}r}r^{\frac{(\beta-1) p}{p-1}}\,dr\right)^{p-1}}\int_0^s\vert Y_\beta(r)\vert^p\,dr \leq c_{p, \beta}\,\mu(\e)^{-\beta p-1}\int_0^s\vert Y_\alpha(r)\vert^p\,dr.
\end{array}\]
This implies that
\[\begin{array}{l}
\ds{\mathbb{E}\sup_{s \in\,[0,t]}\vert J_{\epsilon}(s)\vert^{p}\leq c_{p, \beta}\,\mu(\e)^{\beta p-1}\int_0^t\mathbb{E}\,\vert Y_\beta(r)\vert^p\,dr}\\[14pt]
\ds{\leq c_{p, \beta}\,\mu(\e)^{\beta p-1}\mathbb{E}\int_0^t\left(\int_0^r e^{-2\frac{\gamma_0}{\mu(\epsilon)}(r-\rho)}(r-\rho)^{-2\beta}\vert (A^{-1}B)^\star\sigma^\star(x_{\epsilon}(\rho))\,\Lambda_{\e}(\rho)\vert^2\,d\rho\right)^{p/2}\,dr}	\\[14pt]
\ds{\leq c_{p, \beta}\,\mu(\e)^{\beta p-1}\,\mu(\e)^{-\beta  p+\frac p2}}\int_0^t\mathbb{E}\vert\Lambda_{\e}(s)\vert^p\,ds= c_{p, \alpha}\,\mu(\e)^{\frac {p}2-1}\int_0^t\mathbb{E}\vert\Lambda_{\e}(s)\vert^p\,ds,
\end{array}\]
so that for every $p\geq 1$
\begin{equation}
\label{smf60}
\mathbb{E}\sup_{s \in\,[0,t]}\frac 1{\sqrt{\mu(\epsilon)}}\left|\int_0^s e^{-\frac{\gamma_0}{\mu(\epsilon)}(s-r)}\langle \sigma(x_{\epsilon}(r))\,A^{-1}B\,dw(r),\Lambda_{\e}(r)\rangle\right|\leq c_{p, \alpha}\,\mu(\e)^{-\frac{1}{p}}\left(\int_0^t\mathbb{E}\vert\Lambda_{\e}(s)\vert^p\,ds\right)^{1/p}.	\end{equation}
In particular, due to Step 1, we obtain that for every $\delta>0$ there exists some $c_{\delta, T}>0$ such that
\begin{equation}
\label{smf60-bis}
\mathbb{E}\sup_{s \in\,[0,t]}\frac 1{\sqrt{\mu(\epsilon)}}\left|\int_0^s e^{-\frac{\gamma_0}{\mu(\epsilon)}(s-r)}\langle \sigma(x_{\epsilon}(r))\,A^{-1}B\,dw(r),\Lambda_{\e,\mu(\epsilon)}(r)\rangle\right|\leq c_{\delta, T}\,\mu(\e)^{-\delta}.
\end{equation}

Finally, if we plug \eqref{smf50} and \eqref{smf60-bis} into \eqref{smf48}, thanks to \eqref{smf43} we conclude  
\begin{equation}\label{smf48-bis}\begin{array}{l}
\ds{\mathbb{E}\,\sup_{s \in\,[0,t]}\vert \Lambda_{\e}(t)\vert ^2\leq \vert\Lambda_{\e}(0)\vert^2 +c\,\mathbb{E}\,\sup_{s \in\,[0,t]}\vert \sqrt{\mu(\epsilon)}\,x_{\epsilon}(s)\vert^2+ c_{\delta, T} \mu(\e)^{-\delta}.}
\end{array}\end{equation}
This allows to obtain \eqref{smf46}, by proceeding as in Step 1.

\end{proof}

\begin{Remark}
{\em If we put together \eqref{smf25} and \eqref{smf33}, for every $p\geq 1$ we obtain
\begin{equation}
\label{smf66}
\sup_{\e \in\,(0,1)}\sup_{t \in\,[0,T]}\mathbb{E}\,\vert \sqrt{\mu(\e)+\e}\,v_{\epsilon}(t)\vert^{2p}	\leq c_{p,T},\ \ \ \ \ \ \sup_{\e \in\,(0,1)}\sup_{t \in\,[0,T]}\mathbb{E}\,\vert \sqrt{\mu(\e)+\e}\, u_{\epsilon}(t)\vert^{2p}	\leq c_{p,T},
\end{equation}
and
\begin{equation}
\label{smf66-bis}
\sup_{\e \in\,(0,1)}\mathbb{E}\,\sup_{t \in\,[0,T]}\vert \sqrt{\mu(\e)+\e}\,x_{\epsilon}(t)\vert^{2p}	\leq c_{p,T}.
\end{equation}}
\hfill{$\Box$}		
\end{Remark}

\subsection{Tightness} We are now going to prove that the family $\{\mathcal{L}(x_{\e})\}_{\epsilon \in\,(01)}$ is tight in $C([0,T];\mathbb{R}^d)$.
\begin{Lemma}
Let $\mathcal{R}^1_{\epsilon}(t)$ and $\mathcal{R}^2_{\epsilon}(t)$ be the mappings defined in \eqref{smf69} and \eqref{smf70}, respectively. For every $T>0$ and $p\geq 1$ and for every $\e \in\,(0,1)$  we have
\begin{equation}\label{smf72-bis}
\mathbb{E}\sup_{t \in\,[0,T]}	\vert \mathcal{R}^1_{\epsilon}(t)\vert^p \leq c_{p,T}\left(\mu(\e)+\epsilon\right)^{p/4}.	
\end{equation}
 \begin{equation}\label{smf75-bis}\sup_{\e \in\,(0,1)}\mathbb{E}\sup_{0\leq s<t\leq T}\Big|\frac 1{\sqrt{t-s}}\int_s^t\mathcal{R}^2_{\epsilon}(r)\,dr\Big|^p\leq c_{p,T}. \end{equation}
\end{Lemma}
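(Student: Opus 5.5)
For \eqref{smf72-bis}, I will bound the two terms of $\mathcal{R}^1_\epsilon$ in \eqref{smf69} separately, using the boundedness of $\gamma^{-1}$ (from \eqref{smf12}) and of $\sigma$.

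The first term is controlled as
\[
\mu(\epsilon)\,\vert\gamma^{-1}(x_\epsilon(t))\,u_\epsilon(t)\vert\le c\,\sqrt{\mu(\epsilon)}\;\vert\sqrt{\mu(\epsilon)}\,u_\epsilon(t)\vert .
\]
Taking $\sup_t$ and the $p$-th moment, I will use the maximal bound \eqref{smf46} for $\sqrt{\mu(\epsilon)}\,u_\epsilon$, and its $p$-th moment analogue obtained in the same way (\eqref{smf45} when $\mu/\epsilon$ stays bounded). This gives $\mathbb{E}\sup_t\vert\cdot\vert^p\le c\,\mu(\epsilon)^{p/2-\delta}$. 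Similarly, $\epsilon\,\vert A^{-1}z_\epsilon\vert=\sqrt\epsilon\,\vert A^{-1}\sqrt\epsilon z_\epsilon\vert$, and \eqref{smf43} gives $c\,\epsilon^{p/2-\delta}$. Choosing $\delta=p/4$ yields the rate $(\mu(\epsilon)+\epsilon)^{p/4}$.

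The only technical point here is that \eqref{smf46} is stated for the second moment only. I would first rerun Step 2 of that proof with $|\Lambda_\epsilon|^{2p}$ in place of $|\Lambda_\epsilon|^2$; the factorization argument \eqref{smf60} already handles general $p$. Alternatively, one can interpolate: use Hölder between the non-sup moment bounds \eqref{smf66} and a crude pathwise bound, which loses only an arbitrarily small power of $\mu(\epsilon)+\epsilon$.

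For \eqref{smf75-bis}, I apply Hölder in time:
\[
\Big|\frac{1}{\sqrt{t-s}}\int_s^t\mathcal{R}^2_\epsilon\,dr\Big|\le \Big(\int_0^T\vert\mathcal{R}^2_\epsilon(r)\vert^2dr\Big)^{1/2}.
\]
This removes the supremum over $s<t$, so it suffices to show $\sup_\epsilon\mathbb{E}\big(\int_0^T|\mathcal{R}^2_\epsilon|^2dr\big)^{p/2}<\infty$, and by Jensen this reduces to $\sup_{\epsilon,r}\mathbb{E}|\mathcal{R}^2_\epsilon(r)|^p<\infty$. I then estimate each term of \eqref{smf70} pointwise.

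\textbf{First term.} Since $D\gamma^{-1}$ and $\gamma^{-1}$ are bounded and $|b(x)|\le c(1+|x|)$, it is bounded by $c\,\mu(\epsilon)(1+|x_\epsilon|)|u_\epsilon|$, which equals $c\sqrt{\mu(\epsilon)}\,(1+|x_\epsilon|)\,|\sqrt{\mu(\epsilon)}u_\epsilon|$.

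\textbf{Second term.} It is bounded by $c\sqrt\epsilon\,(1+|x_\epsilon|)\,|\sqrt\epsilon z_\epsilon|$.

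\textbf{Third term.} By \eqref{R(x,u)}, it is bounded by $c\,\mu(\epsilon)\sqrt\epsilon\,\big((1+|x_\epsilon|^2)+(1+|x_\epsilon|)|u_\epsilon|\big)$.

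The main obstacle is the polynomial growth in $x_\epsilon$: \eqref{smf66-bis} only controls $\sqrt{\mu+\epsilon}\,x_\epsilon$, not $x_\epsilon$ itself. The small prefactors compensate for this. By Cauchy–Schwarz, $\mathbb{E}\big[\mu^{p/2}(1+|x_\epsilon|)^p|\sqrt\mu u_\epsilon|^p\big]$ is bounded by $(\mathbb{E}|\sqrt{\mu}\,x_\epsilon|^{2p}+\mu^p)^{1/2}(\mathbb{E}|\sqrt\mu u_\epsilon|^{2p})^{1/2}$. If $\mu\ge\epsilon$, this is bounded using $\sqrt\mu\le\sqrt{\mu+\epsilon}$ together with \eqref{smf66-bis} and \eqref{smf33}. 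If $\mu<\epsilon$, I would use $\sqrt{\mu}\,|u_\epsilon|\le\sqrt{\mu/\epsilon}\,|\sqrt\epsilon u_\epsilon|$ and \eqref{smf25}. The $z$-term is handled in the same way, combining \eqref{smf26} with \eqref{smf66-bis}. The third term carries the extra factor $\mu(\epsilon)\sqrt\epsilon$, which is at least as small as $(\mu+\epsilon)\cdot\sqrt{\mu+\epsilon}$, and this absorbs both $|x_\epsilon|^2$ and $|x_\epsilon||u_\epsilon|$ after Hölder. Collecting these bounds gives \eqref{smf75-bis}.

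If the moment bounds on $x_\epsilon$ turned out insufficient, a fallback would be to first establish directly, via Gronwall on \eqref{smf63}, a uniform bound on $\mathbb{E}\sup_t|x_\epsilon|^{p}$ independent of $\epsilon$.
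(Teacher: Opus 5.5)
Your proposal is correct and follows essentially the paper's route. For $\mathcal{R}^1_\epsilon$ the paper likewise combines \eqref{smf43} with \eqref{smf46}, choosing the loss exponent to get the power $p/4$. For $\mathcal{R}^2_\epsilon$ it uses the same pointwise bound \eqref{smf76}, followed by H\"older's inequality with \eqref{smf66} and \eqref{smf66-bis}, the small prefactors absorbing the growth in $x_\epsilon$ as you describe.

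Your remark that \eqref{smf46} must first be upgraded to $p$-th moments (e.g.\ by rerunning Step 2 with $\vert\Lambda_\epsilon\vert^{2p}$) makes explicit a step the paper leaves implicit. The interpolation alternative and the Gronwall fallback are not needed; in the paper \eqref{smf80} is itself derived from this lemma, so it could not simply be quoted here.
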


\begin{proof}
Thanks to \eqref{smf26} and \eqref{smf33}, we have
\begin{equation}
\label{smf181}
\sup_{t \in\,[0,T]}\mathbb{E}\,\vert \mathcal{R}^1_{\epsilon}(t)\vert^p \leq c_{p,T}\left(\mu(\e)+\epsilon\right)^{\frac p2-\delta}.	
\end{equation}
Moreover
\[\sup_{t \in\,[0,T]}	\vert \mathcal{R}^1_{\epsilon}(t)\vert \leq c\Big(\sup_{t \in\,[0,T]}\mu(\epsilon) \,\vert u_{\epsilon}(t)\vert+\sup_{t \in\,[0,T]}\epsilon \,\vert z_{\epsilon}(t)\vert\Big).\]
Thus,  thanks to \eqref{smf43} and \eqref{smf46}, for every $p\geq 1$ and $\delta>0$  we get
\begin{equation}\label{smf72}
\mathbb{E}\sup_{t \in\,[0,T]}	\vert \mathcal{R}^1_{\epsilon}(t)\vert^p \leq c_{p,T}\left(\mu(\e)+\epsilon\right)^{p/4}.	
\end{equation}

As for $\mathcal{R}^2_{\epsilon}(r)$, due to \eqref{R(x,u)} we have
\begin{equation}\label{smf76}
\begin{array}{l}
\ds{\vert \mathcal{R}^2_{\epsilon}(r)\vert \leq c\, \sqrt{\mu(\e)}\left(1+\vert x_{\epsilon}(r)\vert \right)\,\vert \sqrt{\mu(\e)}\,u_{\epsilon}(r)\vert +c\, \sqrt{\epsilon}\left(1+\vert x_{\epsilon}(r)\vert \right)\,\vert \sqrt{\epsilon}\,z_{\epsilon}(r)\vert}\\[14pt]
\ds{\quad \quad\quad \quad\quad \quad \quad\quad \quad+\sqrt{\e}\,\left( 1+\vert \sqrt{\mu(\epsilon)}\,x_{\e}(r)\vert^2+ \vert \sqrt{\mu(\epsilon)}\,u_{\epsilon}(r)\vert^2\right).}
\end{array}\end{equation}
In view of \eqref{smf66} and \eqref{smf66-bis}, by H\"older inequality this implies  \eqref{smf75-bis}.
\end{proof}

\begin{Lemma}
For every $p\geq 1$ and $T>0$, we have	
\begin{equation}\label{smf80}
\sup_{\e\in\,(0,1)}\mathbb{E}\sup_{s \in\,[0,T]}\vert x_{\epsilon}(s)\vert^p \leq c_{p, T}.
\end{equation}
Moreover, the family $\{\mathcal{L}(x_{\e})\}_{\e \in\,(0,1)}$ is tight in $C([0,T];\mathbb{R}^d)$.
\end{Lemma}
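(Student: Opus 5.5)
The plan is to work directly from the decomposition \eqref{smf63}, which writes $x_\epsilon(t)$ as the sum of six terms:
\[
I_1(t)=\int_0^t(\gamma^{-1}b)(x_\epsilon(r))\,dr,\qquad I_2(t)=\int_0^t(\gamma^{-1}\sigma)(x_\epsilon(r))A^{-1}B\,dw(r),
\]
\[
I_3(t)=\mu(\epsilon)\int_0^t[D\gamma^{-1}(x_\epsilon)u_\epsilon]u_\epsilon\,dr,\qquad I_4(t)=\epsilon\int_0^t[D(\gamma^{-1}\sigma)(x_\epsilon)u_\epsilon]A^{-1}z_\epsilon\,dr,
\]
and the two remainders $\mathcal{R}^1_\epsilon$ and $\int_0^t\mathcal{R}^2_\epsilon\,dr$. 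The bound \eqref{smf80} and the tightness will both come from estimating these six terms, uniformly in $\epsilon\in(0,1)$.

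\emph{Step 1: the bound \eqref{smf80}.} Since $b$ is Lipschitz and $\gamma^{-1}$ is bounded (by \eqref{smf12}), we have $|I_1(t)|\le c\int_0^t(1+|x_\epsilon(r)|)\,dr$. The coefficient $\gamma^{-1}\sigma$ is bounded, so the Burkholder--Davis--Gundy inequality gives $\mathbb{E}\sup_t|I_2(t)|^p\le c_{p,T}$.

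For $I_3$ and $I_4$, the derivatives $D\gamma^{-1}$ and $D(\gamma^{-1}\sigma)$ are bounded. Hence
\[
|I_3(t)|\le c\int_0^t|\sqrt{\mu(\epsilon)}\,u_\epsilon|^2\,dr,\qquad |I_4(t)|\le c\sqrt{\epsilon/(\mu(\epsilon)+\epsilon)}\int_0^t|\sqrt{\mu(\epsilon)+\epsilon}\,u_\epsilon|\,|\sqrt{\epsilon}\,z_\epsilon|\,dr.
\]
By Jensen's inequality in time, \eqref{smf33}, \eqref{smf66} and \eqref{smf26}, both terms have bounded $p$-th moments of the supremum. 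The remainder $\mathcal{R}^1_\epsilon$ is controlled by \eqref{smf72-bis}. For $\int_0^t\mathcal{R}^2_\epsilon\,dr$, take $s=0$ in \eqref{smf75-bis}.

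Putting these together gives
\[
\mathbb{E}\sup_{s\le t}|x_\epsilon(s)|^p\le c_{p,T}\Big(1+|x_0|^p+\int_0^t\mathbb{E}\sup_{\rho\le r}|x_\epsilon(\rho)|^p\,dr\Big).
\]
This is finite a priori for each fixed $\epsilon$ by Theorem \ref{teo1}, so Gronwall's lemma yields \eqref{smf80}.

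\emph{Step 2: tightness.} I will use the Arzel\`a--Ascoli criterion. The initial values are fixed, so it is enough to show that for every $\eta>0$,
\[
\lim_{\delta\to0}\sup_{\epsilon}\mathbb{P}\big(\omega_{x_\epsilon}(\delta)>\eta\big)=0,
\]
where $\omega_{x_\epsilon}$ is the modulus of continuity. Since the modulus is subadditive, I will bound the modulus of each term separately.

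For $I_1$, \eqref{smf80} gives $\omega(\delta)\le c\,\delta\,(1+\sup|x_\epsilon|)$. For $I_2$, the moment bound $\mathbb{E}|I_2(t)-I_2(s)|^p\le c|t-s|^{p/2}$ together with Kolmogorov's criterion (through the Garsia--Rodemich--Rumsey inequality) gives a uniform H\"older bound on its paths. For $\int\mathcal{R}^2_\epsilon$, \eqref{smf75-bis} directly gives $\omega(\delta)\le \delta^{1/2}\xi_\epsilon$ with $\sup_\epsilon\mathbb{E}\xi_\epsilon^p<\infty$. For $\mathcal{R}^1_\epsilon$, I will use $\omega(\delta)\le2\sup_t|\mathcal{R}^1_\epsilon(t)|$, which tends to zero in $L^p$ by \eqref{smf72-bis}. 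For finitely many $\epsilon$, tightness is automatic, so this handles the remainder.

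\emph{Main obstacle: $I_3$ and $I_4$.} These integrands are not pathwise bounded uniformly in $\epsilon$. They are only bounded in $L^p$ at each fixed time. However, for $t>s$, H\"older's inequality in time gives
\[
\mathbb{E}|I_3(t)-I_3(s)|^{p}\le (t-s)^{p-1}\int_s^t\mathbb{E}|\sqrt{\mu(\epsilon)}\,u_\epsilon(r)|^{2p}\,dr\le c_{p,T}(t-s)^{p}
\]
by \eqref{smf33}, and the same bound holds for $I_4$. Taking $p>1$, Kolmogorov's criterion then gives uniform H\"older moduli in probability. This argument uses only the pointwise-in-time bounds \eqref{smf66} and not the sup bounds, and that is why I expect it to work.

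Combining the moduli of all six terms gives the uniform modulus condition, and hence the tightness of $\{\mathcal{L}(x_\epsilon)\}$ in $C([0,T];\mathbb{R}^d)$.
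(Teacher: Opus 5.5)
Your proposal is correct and follows essentially the paper's argument: the same decomposition \eqref{smf63}, the same moment bounds plus Gronwall for \eqref{smf80}, and Kolmogorov-type increment estimates combined with $\mathcal{R}^1_\epsilon\to 0$ in $L^p(\Omega;C([0,T];\mathbb{R}^d))$; the only cosmetic difference is that the paper applies Kolmogorov's criterion to $x_\epsilon+\mathcal{R}^1_\epsilon$ as a whole (via the increment bound $c_{p,T}|t-s|^{p/2}$) rather than bounding the modulus of each term separately. One small imprecision, which the paper shares, is that the values $\epsilon\in[\epsilon_0,1)$ left over after using $\mathcal{R}^1_\epsilon\to0$ are not ``finitely many''; they are easily handled, e.g.\ by $\omega_{x_\epsilon}(\delta)\le\delta\sup_{t\in[0,T]}|v_\epsilon(t)|$ together with \eqref{smf45-bis}, or simply by working along sequences $\epsilon_n\to0$, which is all that is used afterwards.
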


\begin{proof}
According to identity \eqref{smf63},  we have
\begin{equation}\label{smf83}\begin{array}{l}
\ds{\vert (x_{\epsilon}(t)+\mathcal{R}^1_{\epsilon}(t)-(x_{\epsilon}(s)+\mathcal{R}^1_{\epsilon}(s))\vert^p}\\[14pt]
\ds{\leq c_p\int_s^t \Big(1+\vert x_{\epsilon}(r)\vert^p+\vert \sqrt{\mu(\epsilon)}\,u_{\epsilon}(r)\vert ^{2p}+\vert \sqrt{\e}\,u_{\epsilon}(r)\vert^p\,\vert\sqrt{\e}\,z_\e(r)\vert^p\Big)\,dr|t-s|^{p-1}}\\[14pt]
\ds{+c_p\Big|\int_s^t\mathcal{R}^2_{\epsilon}(r)\,dr\Big|^p+c_p \Big|\int_s^t\gamma^{-1}(x_{\epsilon}(r))\sigma(x_{\epsilon}(r))A^{-1}Bdw(r)\Big|^p.}
\end{array}\end{equation}
Hence, due to
estimates \eqref{smf26}, \eqref{smf33},   \eqref{smf72-bis} and \eqref{smf75-bis}, if we take $s=0$, we get
\[\begin{array}{l}
\ds{\mathbb{E}\sup_{t \in\,[0,T]}\vert x_{\epsilon}(t)\vert^p\leq c_{p,T}\int_0^T \mathbb{E}\,\vert x_{\epsilon}(r)\vert^p\,dr+c_{p,T}\Big(1+\left(\mu(\e)+\epsilon\right)^{p/2}\Big),}	
\end{array}\]
and Gronwall's lemma implies \eqref{smf80}.

Next, by using again estimates \eqref{smf26}, \eqref{smf33}, \eqref{smf72-bis}, \eqref{smf75-bis}, and \eqref{smf80}, for every $p\geq 1$ we get 
\begin{equation}
	\label{smf105}
	\begin{array}{l}
\ds{\mathbb{E}\,\big| \big(x_{\epsilon}(t)+\mathcal{R}^1_{\epsilon}(t)\big)
 - \big(x_{\epsilon}(s)+\mathcal{R}^1_{\epsilon}(s)\big) \big|^p}\\[14pt]
\ds{\quad \quad \quad \quad \leq c_p \int_s^t \Big(1+\mathbb{E}\,|x_{\epsilon}(r)|^p\Big)\,dr\, |t-s|^{p-1} +c_T\,|t-s|^{p/2}\leq c_{p,T}\,|t-s|^{p/2}.}
\end{array}
\end{equation}

As a consequence of Kolmogorov's criterion, this, together with \eqref{smf80}, implies that the family 
\[
\{\,\mathcal{L}( x_{\epsilon} + \mathcal{R}^1_{\epsilon} )\,\}_{\epsilon\in(0,1)}
\]
is tight in $C([0,T];\mathbb{R}^d)$. Hence, since by \eqref{smf72-bis}
\[
\lim_{\epsilon\to 0} \mathcal{R}^1_{\epsilon}=0, \quad \quad \text{in } L^p(\Omega;C([0,T];\mathbb{R}^d)),
\]
we obtain the tightness of the family $
\{\, \mathcal{L}(x_{\epsilon} )\,\}_{\epsilon\in(0,1)}$
 in $C([0,T];\mathbb{R}^d)$.
\end{proof}

\begin{Remark}
{\em   \begin{enumerate}
\item[1.]  As a consequence of 
\eqref{smf80}, in view of \eqref{smf76} and \eqref{smf46}, we have that
for every $\lambda>0$ and  $p\geq 1$
\begin{equation}
\label{smf90}
\mathbb{E}\sup_{|t-s|\leq \lambda}\Big|\int_s^t\mathcal{R}^2_{\epsilon}(r)\,dr\Big|^p	\leq c_{p,T}\,\lambda^{p/2}(\sqrt{\mu(\epsilon)}+\sqrt{\e})^{p}.
\end{equation}
\item[2.] In view of \eqref{smf181} and \eqref{smf105}, for every $0\leq s<t\leq T$ and $\e \in\,(0,1)$ we have
\begin{equation}
\label{smf106}
\mathbb{E}\,\big| x_{\epsilon}(t)
 - x_{\epsilon}(s)\big|^p \leq c_{p,T}\,|t-s|^{p/2}+	\left(\mu(\e)+\epsilon\right)^{p/2}.
\end{equation}
   	
   \end{enumerate}
 
}
\hfill{$\Box$}		
\end{Remark}

Since the family $
\{\, \mathcal{L}(x_{\epsilon} )\,\}_{\epsilon\in(0,1)}$
 is tight in $C([0,T];\mathbb{R}^d)$, in order to prove Theorem \ref{teo2} we need to identify  any weak limit point for $
\{\, x_{\epsilon} \,\}_{\epsilon\in(0,1)}$ in $C([0,T];\mathbb{R}^d)$ with  the solution of equation \eqref{main-1}. Then, since uniqueness holds for equation \eqref{main-1}, by a classical  argument we conclude that $
\{\, x_{\epsilon} \,\}_{\epsilon\in(0,1)}$ converges in probability to $x$ in the space $C([0,T];\mathbb{R}^d)$.

\section{Identification of the limit}

Let $\{\e_n\}_{n \in\,\mathbb{N}}$ be a sequence such that
\begin{equation}\label{smf105-bis}\lim_{n\to\infty }\e_n=0,\ \ \ \ \ \lim_{n\to\infty }\frac{\mu(\epsilon_n)}{\e_n}=\alpha \in\,[0,+\infty],\end{equation}
and assume $x_n(t):=x_{\e_n}(t)$ converges weakly in $C([0,T];\mathbb{R}^d)$ to some $C([0,T];\mathbb{R}^d)$-valued process $x(t)$. 
In view of Skhorohod's theorem,  we can assume that such convergence happens in $\mathbb{P}$-a.s. sense.

In what follows, our goal is showing that for every $\alpha \in\,[0,+\infty]$  the limiting process $x$ satisfies the equation
\begin{equation}
\label{smf120}
\begin{array}{l}
\ds{x(t)=x_0+\int_0^t (\gamma^{-1}b)(x(r))\,dr+\int_0^t f_\alpha(x(r))\,dr+ \int_0^t(\gamma^{-1}\sigma)(x(r))A^{-1}Bdw(r),}\end{array}\end{equation}where for every $\alpha \in\,[0,+\infty]$ the function $f_\alpha$ is defined in \eqref{smf91}. In our proof we will need to consider separately the cases $\alpha  \in\,(0,+\infty)$, $\alpha=0$ and $\alpha=\infty$.

\subsection{Case $\alpha \in\,(0,+\infty)$}

In \eqref{smf63}
we have shown that for every  $ t \in\,[0,T]$ and $n \in\,\mathbb{N}$
\begin{equation}\label{smf63-bis}\begin{array}{l}
\ds{x_{n}(t)=x_0+\int_0^t (\gamma^{-1}b)(x_{n}(r))\,dr+\int_0^t(\gamma^{-1}\sigma)(x_{n}(r))A^{-1}Bdw(r)}\\[14pt]
\ds{+\alpha_n\int_0^t	[D\gamma^{-1}(x_{n}(r))\,\tilde{u}_{n}(r)]\,\tilde{u}_{n}(r)\,dr+\int_0^t [D(\gamma^{-1}\sigma)(x_{n}(r))\,\tilde{u}_{n}(r)]\,A^{-1}\tilde{z}_n(r)\,dr}\\[14pt]
\ds{\quad \quad \quad \quad \quad \quad\quad \quad \quad \quad \quad \quad+\mathcal{R}^1_{n}(t)+\int_0^t\mathcal{R}^2_{n}(r)\,dr,}
\end{array}\end{equation}
where  $\mu_n:=\mu(\epsilon_n)$, $\alpha_n:=\mu_n/\e_n$ and 
\[\tilde{u}_n(t):=\sqrt{\e_n}\,u_{\e_n}(t),\ \ \ \ \ \tilde{z}_n(t):=\sqrt{\e_n}\,z_{\e_n}(t),\ \ \ \ \ \ \mathcal{R}^i_{n}(t)=\mathcal{R}^i_{\e_n}(t),\ \ \ i=1,2.\]
Thus,  for every $n \in\,\mathbb{N}$ we have
\begin{equation}\label{smf90}\begin{array}{l}
\ds{x_{n}(t)=x_0+\int_0^t (\gamma^{-1}b)(x_{n}(r))\,dr+\int_0^t f_\alpha(x_n(r))\,dr+ \int_0^t(\gamma^{-1}\sigma)(x_{n}(r))A^{-1}Bdw(r)}\\[14pt]
\ds{\quad \quad \quad \quad \quad \quad \quad \quad \quad \quad \quad  +\Theta^\alpha_n(t)+\mathcal{R}^1_{n}(t)+\int_0^t\mathcal{R}^2_{n}(r)\,dr,}\end{array}\end{equation}
  with
\begin{equation}\label{smf92}\begin{array}{l}
\ds{\Theta^\alpha _n(t):=
\alpha_n\int_0^t	[D\gamma^{-1}(x_{n}(r))\,\tilde{u}_{n}(r)]\,\tilde{u}_{n}(r)\,dr}\\[14pt]
\ds{\quad \quad \quad \quad \quad \quad +\int_0^t[D(\gamma^{-1}\sigma)(x_{n}(r))\,\tilde{u}_{n}(r)]\,A^{-1}\tilde{z}_n(r)\,dr-\int_0^t f_\alpha(x_n(r))\,dr .}
\end{array}\end{equation}
In particular, in view of
\eqref{smf72-bis} and \eqref{smf90}, Theorem \ref{teo2} follows once we prove \begin{equation}
\label{smf95}
\lim_{n\to \infty}\mathbb{E}\sup_{t \in\,[0,T]}\vert \Theta^\alpha_n(t)\vert=0.	
\end{equation}

If we define 
\[g^i_{l,j}(x)=\partial_l\,\gamma^{-1}_{i,j}(x),\ \ \ \ \ \ \ h^i_{l,j}(x):=\partial_l\,(\gamma^{-1}_{i,j}\sigma_{j,k})(x)A^{-1}_{k,h},\]
we obtain \eqref{smf95} once we can show 
\begin{equation}
\label{smf102}
\lim_{n\to \infty}\mathbb{E}\sup_{t \in\,[0,T]}\left|\alpha_n \int_0^t ([D\gamma^{-1}(x_{n}(r))\,\tilde{u}_n(s)]\tilde{u}_n(s))_i\,ds-\int_0^t g^i_{l,j}(x(s))[\alpha\,N_\alpha(x(s))]_{l,j}\,ds\right|=0.
\end{equation}
and\begin{equation}
\label{smf103}
\begin{array}{l}\ds{\lim_{n\to \infty}\mathbb{E}\sup_{t \in\,[0,T]}\Big|\int_0^t ([D(\gamma^{-1}\sigma)(x_{n}(r))\,\tilde{u}_{n}(r)]\,A^{-1}\tilde{z}_n(r))_i\,dr}\\[14pt]
\ds{\quad \quad \quad \quad \quad \quad\quad \quad \quad \quad \quad \quad\quad \quad \quad \quad \quad \quad-\int_0^t h^i_{l,j}(x(s))[L_\alpha(x(s))]_{l,j}\,ds\Big|=0.}
\end{array}
\end{equation}
Thus, in order to prove \eqref{smf95}, and hence Theorem \ref{teo2}, we need to prove the following lemma.
\begin{Lemma} \label{lemma5.1}Assume that \eqref{smf105-bis} holds for $\alpha \in\,(0,+\infty)$ and that $x_n$ converges to $x$ in $C([0,T];\mathbb{R}^d)$, $\mathbb{P}$-a.s. Then, 
for every  Lipschitz-continuous and bounded function $g:\mathbb{R}^d\to\mathbb{R}$ and every $l, j=1,\ldots,d$ and $k=1,\ldots,n$ we have
\begin{equation}
\label{smf100}
\lim_{n\to \infty}\mathbb{E}\sup_{t \in\,[0,T]}\left| \alpha_n\int_0^t g(x	_n(s))[\tilde{u}_n(s)]_l[\tilde{u}_n(s)]_j\,ds-\int_0^t g(x(s))[\alpha N_\alpha(x(s))]_{l,j}\,ds\right|=0,
\end{equation}
and
\begin{equation}
\label{smf101}
\lim_{n\to \infty}\mathbb{E}\sup_{t \in\,[0,T]}\left|\int_0^t g(x	_n(s))[\tilde{u}_n(s)]_l[\tilde{z}_n(s)]_k\,ds-\int_0^t g(x(s))\,[L_\alpha(x(s))]_{l,k}\,ds\right|=0.
\end{equation}
\end{Lemma}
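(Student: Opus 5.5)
The plan is a Khasminskii-type time discretization combined with the ergodic estimate \eqref{smf15} for the frozen fast system \eqref{smf4}. I will write the argument for \eqref{smf100}; \eqref{smf101} is identical with the test function $\varphi(u,z)=u_lz_k$ in place of $u_lu_j$, which produces $[L_\alpha]_{l,k}$ instead of $[N_\alpha]_{l,j}$.

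\textbf{Step 1: rescaling and replacement of $\alpha_n$.} Rescale time by $\epsilon_n$ as in \eqref{smf2}, so that $\hat u_n(t)=\tilde u_n(\epsilon_n t)$ and $\hat z_n(t)=\tilde z_n(\epsilon_n t)$. The pair then solves a system with the following features:
\begin{itemize}
\item its coefficient $\epsilon_n/\mu_n=1/\alpha_n\to 1/\alpha$;
\item its slow variable is $\hat x_n(t)=x_n(\epsilon_n t)$;
\item its perturbation terms are of size $\epsilon_n$ (the $S_1,S_2$ terms).
\end{itemize}
Since $\alpha_n\to\alpha$ and \eqref{smf25}, \eqref{smf26} give uniform moments, I will first replace $\alpha_n$ by $\alpha$ in front of the integral at a cost $|\alpha_n-\alpha|\,\mathbb{E}\int_0^T|\tilde u_n|^2\to 0$. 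I will also replace $g(x(s))$ by $g(x_n(s))$ in the limit term, which is allowed by the a.s. uniform convergence, boundedness of $g$ and $N_\alpha$, and dominated convergence.

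\textbf{Step 2: frozen-coefficient comparison.} Partition $[0,T]$ into intervals $I_k=[k\delta_n,(k+1)\delta_n)$, with $\delta_n=\epsilon_n\zeta_n$ and $\zeta_n\to\infty$ slowly, so that $\delta_n\to0$ and $\zeta_n^2(\delta_n+\epsilon_n+\mu_n)\to0$ as indicated in the introduction. On each $I_k$ define $(\bar u_n,\bar z_n)$ as the solution of \eqref{smf4} with $\alpha$ and with $x$ frozen at $x_n(k\delta_n)$. It starts from $(\tilde u_n(k\delta_n),\tilde z_n(k\delta_n))$ and is driven by the same Brownian increments, rescaled by $\epsilon_n$. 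Since the noise equations coincide, $\bar z_n=\tilde z_n$ on $I_k$.

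The difference $\rho_n=\tilde u_n-\bar u_n$ satisfies a linear ODE with dissipation $\gamma(x_n(k\delta_n))$, which is uniformly positive by \eqref{smf12}. Its forcing consists of:
\begin{itemize}
\item $(\gamma(x_n(k\delta_n))-\gamma(x_n(t)))\tilde u_n$ and the analogous $\sigma$ term, both controlled by $|x_n(t)-x_n(k\delta_n)|$;
\item the mismatch $1/\alpha_n-1/\alpha$;
\item the $S_1,S_2$ remainders.
\end{itemize}
By Gronwall on the fast time scale and the increment bound \eqref{smf106} on intervals of length $\delta_n$,
\[
\mathbb{E}\,|\rho_n(t)|^2\lesssim \zeta_n^2\big(\delta_n+\mu_n+\epsilon_n\big)+|\alpha_n^{-1}-\alpha^{-1}|^2\zeta_n^2 .
\]
With $\zeta_n$ chosen slowly enough this tends to $0$. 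Together with the moment bounds and Cauchy--Schwarz, this lets me replace $g(x_n(s))\tilde u_{n,l}\tilde u_{n,j}$ by $g(x_n(k\delta_n))\bar u_{n,l}\bar u_{n,j}$ on $I_k$. The error is uniform in $t$, because it is summed over at most $T/\delta_n$ blocks, each of size $o(\delta_n)$ in $L^1$. The term $g(x_n(s))$ versus $g(x_n(k\delta_n))$ is handled by the Lipschitz property and \eqref{smf106}.

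\textbf{Step 3: averaging.} On each block, after the time change $s=k\delta_n+\epsilon_n r$ with $r\in[0,\zeta_n]$, the quantity $\frac1{\delta_n}\int_{I_k}\bar u_{n,l}\bar u_{n,j}\,ds$ is a time average of the frozen process over a horizon of length $\zeta_n$. Conditioning on $\mathcal F_{k\delta_n}$ and applying \eqref{smf15}, with $\varphi(u,z)=u_lu_j$ (locally Lipschitz with quadratic growth, handled by truncation or the Gaussian moments), gives
\[
\mathbb{E}\Big|\frac1{\delta_n}\int_{I_k}\bar u_{n,l}\bar u_{n,j}\,ds-[N_\alpha(x_n(k\delta_n))]_{l,j}\Big|\lesssim \frac{1+\mathbb{E}|\tilde u_n(k\delta_n)|^2+\mathbb{E}|\tilde z_n(k\delta_n)|^2}{\sqrt{\omega_\alpha\zeta_n}}\to0 .
\]
Here \eqref{smf15} is applied for fixed $\alpha$ and $x$, using that its bound is uniform in $x$. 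Summing over blocks gives $\mathbb{E}\sup_t$ convergence, since the partial-block contribution at the final time is $O(\delta_n)$. Finally, $N_\alpha$ is continuous in $x$, so $N_\alpha(x_n(k\delta_n))$ can be exchanged for $N_\alpha(x_n(s))$, and then by Step 1 for $N_\alpha(x(s))$.

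\textbf{Main obstacle.} The delicate step is Step 2: the comparison estimate must hold in $L^1$ uniformly over $T/\delta_n$ blocks while the quadratic observables are unbounded. Two points need care:
\begin{itemize}
\item making the $\zeta_n$ bookkeeping compatible with the Gronwall factor $e^{c\zeta_n}$, which may force a logarithmic choice of $\zeta_n$;
\item controlling the increments of $x_n$ inside a block using only \eqref{smf106}, whose additive $(\mu_n+\epsilon_n)^{p/2}$ term is what imposes the constraint on $\zeta_n$.
\end{itemize}
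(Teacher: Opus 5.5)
Your proposal is essentially the paper's proof. It uses the same Khasminskii blocks (the paper takes $\delta_n=\mu_n\zeta_n$, equivalent to your $\epsilon_n\zeta_n$ since $\alpha\in(0,\infty)$), the same frozen-coefficient comparison $\mathbb{E}|\tilde u_n-\bar u_n|^2\lesssim(\delta_n+\mu_n+\epsilon_n)(\delta_n/\mu_n)^2$, the ergodic bound \eqref{smf15} summed over $[T/\delta_n]$ blocks to give $O\big((\omega_{\alpha_n}\zeta_n\alpha_n)^{-1/2}\big)$, and continuity to pass from $x_n(k\delta_n)$ to $x$. The only differences are cosmetic: the paper freezes with $\alpha_n$ and uses $\alpha_nN_{\alpha_n}\to\alpha N_\alpha$ at the end instead of your $\alpha_n^{-1}-\alpha^{-1}$ mismatch term, and it writes $\tilde u_n-\bar u_n$ through the contracting semigroup $e^{-\gamma(x_n(k\delta_n))(t-s)/\mu_n}$, so no Gronwall factor $e^{c\zeta_n}$ arises and your worry about needing a logarithmic $\zeta_n$ is unnecessary.
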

\begin{proof}
In what follows we only prove \eqref{smf100}, as the proof of \eqref{smf101} can be done by using the very same arguments.

Assume that $\delta_n:=\mu_n \zeta_n$, for some $\zeta_n>0$ such that
\begin{equation}\label{smf110}\lim_{n\to\infty}\zeta_n=\infty,\ \ \ \ \ \lim_{n\to\infty} \zeta_n^2\left(\mu_n\zeta_n+\mu_n+\e_n\right)=0.\end{equation}
We denote by  $\bar{x}_n(t)$ the process
\[\bar{x}_n(t)=x_n(k \delta_n),\ \ \ \ \ t \in\,[k\delta_n,(k+1)\delta_n),\ \ \  k=0,\ldots,[T/\delta_n]-1,\]
and by $\bar{u}_n$ the solution of the equation
\[\left\{\begin{array}
	{l}
	\ds{\frac{d\bar{u}_{n}}{dt}(t)=\frac{1}{\mu_n}\Big(-\gamma(x_{n}(k\delta_n))\,\bar{u}_{n}(t)+\sigma(x_{n}(k\delta_n))\,\tilde{z}_n(t)\Big),\ \ \ \ \ \ t \in\,[k\delta_n,(k+1)\delta_n),}\\[14pt]
	\ds{\bar{u}_n(k\delta_n)=\tilde{u}_n(k\delta_n),}
\end{array}\right.\]
for every $k=0,\ldots,[T/\delta_n]$.

\medskip

{\em Step 1.} 
We have
\begin{equation}
\label{smf107-bis}
\lim_{n\to \infty} \sup_{t \in\,[0,T]}\mathbb{E}\vert x_n(t)-\bar{x}_n(t)\vert^2=0,
\end{equation}
and
\begin{equation}
\label{smf107}
\lim_{n\to \infty} \sup_{t \in\,[0,T]}\mathbb{E}\vert \tilde{u}_n(t)-\bar{u}_n(t)\vert^2=0.	
\end{equation}

Due to \eqref{smf106},
limit \eqref{smf107-bis} holds with the only assumption that $\delta_n\to 0$, As for \eqref{smf107}, if we define $\Delta_n(t):=\tilde{u}_n(t)-\bar{u}_n(t)$, for every $k=0,\ldots,[T/\delta_n]-1$ and $t \in\,[k\delta_n,(k+1)\delta_n)$ we have
\[\begin{array}{l}
\ds{\Delta_n(t)=-\frac{1}{\mu_n}\int_{k\delta_n}^{	t}\gamma(x_{n}(k\delta_n))\Delta_n(s)\,ds-\frac{1}{\mu_n}\int_{k\delta_n}^{t}\left(\gamma(x_{n}(s))-\gamma(x_{n}(k\delta_n))\right)\tilde{u}_n(s)\,ds
}\\[14pt]
\ds{\quad  
+\frac{1}{\mu_n}\int_{k\delta_n}^{	t}\left(\sigma(x_{n}(s))-\sigma(x_{n}(k\delta_n))\right)\tilde{z}_n(s)\,ds-\int_{k\delta_n}^{	t}\left(\sqrt{\e_n}\,S_1(x_{n}(s))+S_2(x_{n}(s))\,\tilde{u}_n(s)\right)\,ds.}\\\end{array}\]
Hence, it holds
\[\begin{array}{l}
\ds{\Delta_n(t)=-\frac{1}{\mu_n}\int_{k\delta_n}^{	t}e^{-\frac{\gamma(x_{n}(k\delta_n))}{\mu_n}(t-s)}\left(\gamma(x_{n}(s))-\gamma(x_{n}(k\delta_n))\right)\tilde{u}_n(s)\,ds}\\[14pt]
\ds{\quad \quad \quad \quad +\frac{1}{\mu_n}\int_{k\delta_n}^{	t}e^{-\frac{\gamma(x_{n}(k\delta_n))}{\mu_n}(t-s)}\left(\sigma(x_{n}(s))-\sigma(x_{n}(k\delta_n))\right)\tilde{z}_n(s)\,ds}\\[14pt]
\ds{\quad \quad \quad \quad \quad \quad -\int_{k\delta_n}^{	t}e^{-\frac{\gamma(x_{n}(k\delta_n))}{\mu_n}(t-s)}\left(\sqrt{\e_n}\,S_1(x_{n}(s))+S_2(x_{n}(s))\,\tilde{u}_n(s)\right)\,ds.}	
\end{array}\]
Due to the Lipschitz continuity of $\gamma$ and  $\sigma$ and estimates \eqref{R(x,u)}, this implies
\[\begin{array}{l}
\ds{\vert\Delta_n(t)\vert\leq  \frac{c}{\mu_n}\int_{k\delta_n}^{	t}e^{-\frac{\gamma_0}{\mu_n}(t-s)}\left|x_{n}(s)-x_{n}(k\delta_n)\right|\left(\vert \sqrt{\e_n}\,u_n(s)\vert+\vert\sqrt{\e_n}\,z_n(s)\vert\right)\,ds}\\[14pt]
\ds{\quad \quad \quad \quad \quad \quad
+c\int_{k\delta_n}^{	t}e^{-\frac{\gamma_0}{\mu_n}(t-s)}\left(1+\vert x_{n}(s)\vert^2+\vert\sqrt{\e_n}\,u_n(s)\vert^2\right)\,ds,}	
\end{array}\]
and in view of \eqref{smf26}, \eqref{smf25},\eqref{smf80}, and \eqref{smf106}, we get
\begin{equation}
\label{smf130}
\mathbb{E}	\vert\Delta_n(t)\vert^2\leq c_T\left(\delta_n+\mu_n+\epsilon_n\right)\left(\frac{\delta_n}{\mu_n}\right)^2.
\end{equation}
In particular, if we assume
$\delta_n>0$ satisfies \eqref{smf110},
we obtain \eqref{smf107}.

\medskip

{\em Step 2.} We have
\begin{equation}\label{smf143}
\lim_{n\to\infty}\mathbb{E}\sup_{t \in\,[0,T]}\left|\alpha_n\int_0^t \left(g(x	_n(s))-g(\bar{x}_n(s))\right)[\tilde{u}_n(s)]_l[\tilde{u}_n(s)]_j\,ds\right|=0.\end{equation}

Due to the Lipschitz-continuity of $g$, we have
\[\begin{array}{l}
\ds{\alpha_n\int_0^t \left|g(x	_n(s))-g(\bar{x}_n(s))\right|\,\vert[\tilde{u}_n(s)]_l\vert\,\vert[\tilde{u}_n(s)]_j\vert\,ds}\\[14pt]
\ds{\quad \quad \quad \leq c\,\int_0^t \left|x	_n(s)-\bar{x}_n(s)\right|\,\vert[\sqrt{\mu_n}\,u_n(s)]_l\vert\,\vert[\sqrt{\mu_n}\,u_n(s)]_j\vert\,ds.}	
\end{array}\]
Hence, thanks to \eqref{smf33} and \eqref{smf107-bis}, we obtain \eqref{smf143}.

\medskip

{\em Step 3.} For every $l,j=1,\ldots,d$ we have
\begin{equation}
\label{smf111}
\lim_{n\to\infty} \sum_{k=0}^{[T/\delta_n]-1}\mathbb{E}\,\Big|\int_{k\delta_n}^{(k+1)\delta_n}	\left( \alpha_n\, g(x_n(k\delta_n))[\bar{u}_n(s)]_l[\bar{u}_n(s)]_j- g(x_n(k\delta_n))[\alpha_n\,N_{\alpha_n}(x(k\delta_n))]_{l,j}\right)\,ds\Big|=0.
\end{equation}

The distribution of the process 
\[(u_{1,n}(s),z_{1,n}(s)):=(\bar{u}_n(k\delta_n+s),\tilde{z}_n(k\delta_n+s)),\ \ \ \ \ s \in\,[0,\delta_n],\]
is the same as the distribution of the process
\[(u_{2,n}(s),z_{2,n}(s)):=(u^{x_n(k\delta_n),\tilde{u}_n(k\delta_n)}(s/\e_n),z^{\tilde{z}_n(k\delta_n)}(s/\e_n)),\ \ \ \ \ s \in\,[0,\delta_n],\]
where $(u^{x_n(k\delta_n),\tilde{u}_n(k\delta_n)},z^{\tilde{z}_n(k\delta_n)})$ is the solution of system \eqref{smf4} for $\alpha=\alpha_n$, with frozen slow component $x_n(k\delta_n)$, initial conditions $\tilde{u}_n(k\delta_n)$ and $\tilde{z}_n(k\delta_n)$ and noise $\tilde{w}_n(t)$, independent of $x_n(k\delta_n)$, $\tilde{u}_n(k\delta_n)$ and $\tilde{z}_n(k\delta_n)$.
Hence
\[\begin{array}{l}
\ds{\mathbb{E}\,\Big|\int_{k\delta_n}^{(k+1)\delta_n}	\left(\alpha_n\,g(x	_n(k\delta_n))[\bar{u}_n(s)]_l[\bar{u}_n(s)]_j- g(x_n(k\delta_n))[\alpha_n\,N_{\alpha_n}(x_n(k\delta_n))]_{l,j}\right)\,ds\Big|}\\[14pt]
\ds{\quad =\mathbb{E}\,\Big|\int_{0}^{\delta_n}	\left(\alpha_n\,  g(x	_n(k\delta_n))[u_{1,n}(s)]_l[u_{1,n}(s)]_j- g(x_n(k\delta_n))[\alpha_n\,N_{\alpha_n\,}(x_n(k\delta_n))]_{l,j}\right)\,ds\Big|}	\\[14pt]
\ds{\quad =\mathbb{E}\,\Big|\int_{0}^{\delta_n}	\left(\alpha_n\, g(x	_n(k\delta_n))[u_{2,n}(s)]_l[u_{2,n}(s)]_j- g(x_n(k\delta_n))[\alpha_n\,N_{\alpha_n}(x_n(k\delta_n))]_{l,j}\right)\,ds\Big|}\\[14pt]
\ds{\quad  =\delta_n\,\alpha_n\mathbb{E}\,\Big|\frac 1{\theta_n}\int_{0}^{\theta_n}	\Big( g(x	_n(k\delta_n))[u^{x_n(k\delta_n),\tilde{u}_n(k\delta_n)}(s)]_l[u^{x_n(k\delta_n),\tilde{u}_n(k\delta_n)}(s)]_j}\\[14pt]
\ds{\quad \quad \quad \quad \quad \quad \quad \quad \quad \quad \quad \quad \quad \quad - g(x_n(k\delta_n))[N_{\alpha_n}(x_n(k\delta_n))]_{l,j}\Big)\,ds\Big|,}
\end{array}\]
where
\[\theta_n:=\frac{\delta_n}{\e_n}=\frac{\delta_n}{\mu_n}\alpha_n= \zeta_n\,\alpha_n.\]
Thanks to \eqref{smf15}, and to \eqref{smf26} and \eqref{smf33},  this implies 
\[\begin{array}{l}
\ds{\mathbb{E}\,\Big|\int_{k\delta_n}^{(k+1)\delta_n}	\left( \alpha_n\,g(x	_n(k\delta_n))[\bar{u}_n(s)]_l[\bar{u}_n(s)]_j- g(x_n(k\delta_n))[\alpha_n\,N_{\alpha_n}(x(k\delta_n))]_{l,j}\right)\,ds\Big|,}\\[14pt]
\ds{\quad \quad \quad \quad \quad \quad \leq c\,\frac{\delta_n\,\alpha_n}{\sqrt{\omega_{\alpha_n}\theta_n}}\left(1+\mathbb{E}\,\vert\tilde{u}_n(k\delta_n)\vert + \mathbb{E}\,\vert \tilde{z}_n(k\delta_n)\vert\right)\leq c\,\frac{\delta_n}{\sqrt{\omega_{\alpha_n}\theta_n}},}	
\end{array}
\]
so that
\[\begin{array}{l}
\ds{\sum_{k=0}^{[T/\delta_n]-1}\mathbb{E}\,\Big|\int_{k\delta_n}^{(k+1)\delta_n}	\left( \alpha_n\, g(x_n(k\delta_n))[\bar{u}_n(s)]_l[\bar{u}_n(s)]_j- g(x_n(k\delta_n))[\alpha_n\,N_{\alpha_n}(x(k\delta_n))]_{l,j}\right)\,ds\Big|}\\[14pt]
\ds{\quad \quad \quad \quad \quad \quad \quad \quad \quad \quad \quad \quad \leq c\,[T/\delta_n]\,\frac{\delta_n}{\sqrt{\omega_{\alpha_n}\theta_n}} \leq \frac{c_T\,}{\sqrt{\omega_{\alpha_n}\theta_n}}.}
	\end{array}\]

Now, we recall that 
\[\omega_{\alpha_n}:=\min\left\{ \frac {\gamma_0} {\alpha_n}, \bar{\lambda}\right\}.\]	
If $\alpha\leq \gamma_0/\bar{\lambda}$, then for every $n$ large enough we have $\omega_{\alpha_n}\geq \bar{\lambda}/2$ and $\theta_n\geq \zeta_n\,\alpha/2$, so that
\[\frac{1}{\omega_{\alpha_n}\theta_n}\leq \frac{4 }{\bar{\lambda}\,\alpha\,\zeta_n}\to 0. \]
On the other hand, if $\alpha>\gamma_0/\bar{\lambda}$, then  for all $n$ large enough we have $\omega_{\alpha_n}=\gamma_0/\alpha_n$, so that 
\[\frac{1}{\omega_{\alpha_n}\theta_n}=\frac{\alpha_n}{\gamma_0\,\alpha_n\,\zeta_n}=\frac{1}{\gamma_0\,\zeta_n}\to 0. \]
All this implies \eqref{smf111}.

\medskip

{\em Step 4.} We have
\begin{equation}
\label{smf123-bis}
\lim_{n\to \infty}\mathbb{E}\sup_{t \in\,[0,T]}\left| \int_0^t \left(g(\bar{x}_n(s))-g(x(s))\right)\,[\alpha_n N_{\alpha_n}(x(s))]_{l,j}\,ds\right|=0.
\end{equation}

Due to \eqref{smf107-bis} and the convergence of $x_n$ to $x$, we have
\[\lim_{n\to\infty}\sup_{t \in\,[0,T]}\left| \int_0^t \left(g(\bar{x}_n(s))-g(x(s))\right)\,[\alpha_n N_{\alpha_n}(x(s))]_{l,j}\,ds\right|=0.\]
Thus, since
\begin{equation}\label{smf140}\sup_{n \in\,\mathbb{N}}\,\sup_{s \in\,[0,T]}\,\left\vert [\alpha_n N_{\alpha_n}(x(s))]_{l,j}\right\vert<\infty,\end{equation}
by the dominated convergence theorem, we obtain \eqref{smf123-bis}.

\medskip

{\em Step 5.} We have
\begin{equation}
\label{smf123}
\lim_{n\to \infty}\mathbb{E}\sup_{t \in\,[0,T]}\left| \int_0^t g(x(s))[\alpha_n N_{\alpha_n}(x(s))]_{l,j}\,ds-\int_0^t g(x(s))[\alpha N_\alpha(x(s))]_{l,j}\,ds\right|=0.
\end{equation}

For every  $l, j =1,\ldots,d$, and $\alpha \in\,(0,+\infty]$, we have
\[\lim_{n\to\infty}\sup_{s \in\,[0,T]}\left| g(x(s))[\alpha_n N_{\alpha_n}(x(s))]_{l,j}-g(x(s))[\alpha N_{\alpha}(x(s))]_{l,j}\right|=0,\ \ \ \ \ \ \mathbb{P}-\text{a.s}.\]
Then, thanks to \eqref{smf140}
the dominated convergence theorem implies \eqref{smf123}.

\medskip

{\em Conclusion.} From the combination of \eqref{smf143}, \eqref{smf111}, \eqref{smf123-bis}, and \eqref{smf123}, we obtain \eqref{smf100}.

\end{proof}


\subsection{Case $\alpha=0$}
When $\alpha_n\to0$, we cannot proceed with the same arguments that we used in the case $\alpha \in\,(0,+\infty)$, because the time intervals $[k \delta_n,(k+1)\delta_n]$ where we can approximate $\tilde{u}_n$ by the localized process $\bar{u}_n$, are too short to allow the validity of the averaging limit for 
\begin{equation}\label{smf134}\int_{k\delta_n}^{(k+1)\delta_n} D(\gamma^{-1}\sigma)(x_{n}(r))\,\tilde{u}_{n}(r)]\,A^{-1}\tilde{z}_n(r)\,dr.\end{equation}
For this reason, we need to follow  an alternative route.

If we denote
\[v_n(t)=v_{\epsilon_n}(t),\ \ \ \ \ \ z_n(t)=z_{\epsilon_n}(t),\]
since $\alpha_n\to 0$, by proceeding as in Subsection \ref{ss3.2} we have
\[\begin{array}{l}\ds{x_n(t)=x_0+\int_0^t(\gamma^{-1}b)(x_n(s))\,ds+\int_0^t (\gamma^{-1}\sigma)(x_n(s))\,A^{-1}B\,dw(s)}\\[14pt]
\ds{\quad \quad \quad \quad +\epsilon_n\, \int_0^t \frac{d}{ds}((\gamma^{-1}\sigma)(x_n(s))) A^{-1}z_n(s)\,ds+r_{n,1}(s),}\end{array}\]
for some process $r_{n,1}(t)$ such that 
\begin{equation}\label{smf201-bis}\lim_{n\to \infty}\,\mathbb{E}\sup_{t \in\,[0,T]}\vert r_{n,1}(t)\vert^2=0.\end{equation}
Recalling that 
\[v_n(t)=\left[(\gamma^{-1}b)(x_n(t))+(\gamma^{-1}\sigma)(x_n(t))z_n(t)\right]-\mu_n\,\gamma^{-1}(x_n(t))\frac{dv_n}{dt}(t),\]
 we have 
\[\begin{array}{l}
\ds{\epsilon_n\, d((\gamma^{-1}\sigma)(x_n))(t) A^{-1}z_n(t)=\epsilon_n\, (D(\gamma^{-1}\sigma)(x_n(t))v_n(t)) A^{-1}z_n(t)\,dt}\\[14pt]
\ds{\quad \quad =\epsilon_n\, \left(D(\gamma^{-1}\sigma)(x_n(t))\left[(\gamma^{-1}b)(x_n(t))+\,(\gamma^{-1}\sigma)(x_n(t))\,z_n(t)\right]\right)A^{-1}z_n(t)\,dt}\\[14pt]
\ds{\quad \quad \quad \quad -\mu_n\,\epsilon_n \left(D(\gamma^{-1}\sigma)(x_n(t))\gamma^{-1}(x_n(t)\,dv_n(t)\right)A^{-1}z_n(t)\,dt}\\[14pt]
\ds{=\epsilon_n\, \left(D(\gamma^{-1}\sigma)(\gamma^{-1}\sigma)(x_n(t))\,z_n(t)\right)A^{-1}z_n(t)\,dt}\\[14pt]
\ds{\quad \quad \quad -\mu_n\,\epsilon_n \left(D(\gamma^{-1}\sigma)(x_n(t))\gamma^{-1}(x_n(t)\,dv_n(t)\right)A^{-1}z_n(t)\,dt+r_{n,2}(t)\,dt,}	
\end{array}\]
with
\begin{equation}\label{smf201-tris}\lim_{n\to \infty}\,\mathbb{E}\sup_{t \in\,[0,T]}\vert r_{n,2}(t)\vert^2=0.\end{equation}

Moreover,
\[\begin{array}{l}
\ds{\mu_n\,\epsilon_n\left(D(\gamma^{-1}\sigma)(x_n(t))\gamma^{-1}(x_n(t))\,dv_n(t)\right)A^{-1}z_n(t)}\\[14pt]
\ds{\quad \quad =\mu_n\,\epsilon_nd\left(\left(D(\gamma^{-1}\sigma)(x_n(t))\gamma^{-1}(x_n(t))\,v_n(t)\right)A^{-1}z_n(t)\right)}\\[14pt]
\ds{\quad \quad \quad \quad -\mu_n\,\epsilon_n\Big(\left(D^2(\gamma^{-1}\sigma)(x_n(t))[v_n(t),\gamma^{-1}(x_n(t))\,v_n(t)]\right)A^{-1}z_n(t)\Big)}	\\[14pt]
\ds{\quad \quad \quad \quad \quad \quad -\mu_n\,\epsilon_n\Big(\left(D(\gamma^{-1}\sigma)(x_n(t))[D(\gamma^{-1}\sigma)(x_n(t))v_n(t)]v_n(t)\right)A^{-1}z_n(t)\Big)}\\[14pt]
\ds{\quad \quad +\frac{\mu_n}{\sqrt{\e_n}}\left(D(\gamma^{-1}\sigma)(x_n(t))\gamma^{-1}(x_n(t))\,v_n(t)\right)\tilde{z}_n(t)\,dt}\\[14pt]
\ds{\quad \quad \quad \quad \quad \quad\quad \quad-\mu_n\left(D(\gamma^{-1}\sigma)(x_n(t))\gamma^{-1}(x_n(t))\,v_n(t)\right)dw(t).}\end{array}\]
By checking at each term on the right hand side above, it is not difficult to get
\[\lim_{n\to\infty}\mathbb{E}\sup_{t \in\,[0,T]}\Big|\mu_n\,\epsilon_n\int_0^t\left(D(\gamma^{-1}\sigma)(x_n(s))\gamma^{-1}(x_n(s))\,\frac{dv_n}{ds}(s)\right)A^{-1}z_n(s)\,ds\Big|^2=0,\]
and this, together with  \eqref{smf201-bis} and \eqref{smf201-tris} implies
\[\begin{array}{l}\ds{x_n(t)=x_0+\int_0^t(\gamma^{-1}b)(x_n(s))\,ds+\int_0^t (\gamma^{-1}\sigma)(x_n(s))\,A^{-1}B\,dw(s)}\\[14pt]
\ds{\quad \quad \quad \quad + \left(D(\gamma^{-1}\sigma)(\gamma^{-1}\sigma)(x_n(t))\,\tilde{z}_n(t)\right)A^{-1}\tilde{z}_n(t)\,dt+r_{n}(t),}\end{array}\]
with 
\begin{equation}\label{smf201-quater}\lim_{n\to \infty}\,\mathbb{E}\sup_{t \in\,[0,T]}\vert r_{n}(t)\vert^2=0.\end{equation}

In particular, in order to complete the proof of  Theorem \ref{teo2}, it is enough to show for every $j,h =1,\ldots,n$
\begin{equation}\label{smf135}
\lim_{n\to \infty}\mathbb{E}\sup_{t \in\,[0,T]}\left|\int_0^t g(x	_n(s))[\tilde{z}_n(s)]_j[\tilde{z}_n(s)]_h\,ds-\int_0^t g(x(s))\,[L_0(x(s))]_{j,h}\,ds\right|=0,\end{equation}
where
\[g=\partial_l(\gamma^{-1}\sigma)_{i,k}(\gamma^{-1}\sigma)_{l,j}A^{-1}_{k,h}.\]
Now, the proof of \eqref{smf135} follows from using the same arguments used in the proof of Lemma \ref{lemma5.1}, after localizing $x_n(t)$ in small intervals $[k\delta_n, (k+1)\delta_n]$, $k=0,\ldots,[T/\delta_n]-1$, for some $\delta_n\to 0$ such that $\delta_n/\e_n\to\infty.$ And this is clearly possible, due to \eqref{smf106}.

\subsection{Case $\alpha=\infty$}
We recall that  
\begin{equation}
 \label{system1-tris} 
 \left\{\begin{array}{l}
\ds{\frac{dx_n}{dt}(t)=v_n(t),\ \ \ \ x_n(0)=x_0,}\\[10pt]
\ds{\mu_n \frac{dv_n}{dt}(t)=-\gamma(x_n(t))\,v_n(t)+b(x_n(t))+\sigma(x_n(t))\,z_{n}(t),\ \ \ \ v_n(0)=v_0,}\end{array}\right.
	\end{equation}
	and
	\[\e_n\,dz_{n}(t)=-Az_{n}(t)\,dt+B\,dw(t),\ \ \ \ z_{n}(0)=0.\]
Now, we introduce the system
\begin{equation}
 \label{system1-quater} 
 \left\{\begin{array}{l}
\ds{\frac{d\bar{x}_n}{dt}(t)=\bar{v}_n(t),\ \ \ \ \bar{x}_n(0)=x_0,}\\[10pt]
\ds{\mu_n \frac{d\bar{v}_n}{dt}(t)=-\gamma(\bar{x}_n(t))\,\bar{v}_n(t)+b(\bar{x}_n(t))+\sigma(\bar{x}_n(t))A^{-1}B\,dw(t),\ \ \ \ v_n(0)=v_0.}\end{array}\right.
	\end{equation}
If we can prove that
\begin{equation}
\label{smf200}
\lim_{n\to\infty} \mathbb{E}\sup_{t \in\,[0,T]}\vert x_n(t)-\bar{x}_n(t)\vert^2=0,	
\end{equation}
	we have that that the limiting behavior of $x_n$ in $L^2(\Omega;C([0,T];\mathbb{R}^d))$ is the same as the limiting behavior of $\bar{x}_n$. Therefore, since we know from \cite{hmdvw} that  $\bar{x}_n$ converges to the solution of equation \eqref{smf120}, with $\alpha=\infty$, we conclude our proof.

\subsubsection{Proof of  \eqref{smf200}}

We have
\[\begin{array}{l}
\ds{d x_n(t)=-\mu_n\,\gamma^{-1}(x_n(t))dv_n(t)+(\gamma^{-1}b)(x_n(t))\,dt+(\gamma^{-1}\sigma)(x_n(t))z_{n}(t)\,dt}\\[14pt]
\ds{\quad \quad = \mu_n\left[D\gamma^{-1}(x_n(t))v_n(t)\right]v_n(t)\,dt+(\gamma^{-1}b)(x_n(t))\,dt+(\gamma^{-1}\sigma)(x_n(t))A^{-1}B\,dw(t)}\\[10pt]
\ds{-d\left(\mu_n\,\gamma^{-1}(x_n)\,v_n+\epsilon_n\,(\gamma^{-1}\sigma)(x_n)A^{-1}B\,z_{n}\right)(t)+\epsilon_n\,[D(\gamma^{-1}\sigma)(x_n(t))v_n(t)]A^{-1}B\,z_{n}(t)dt,}	
\end{array}\]
and if we rearrange all terms
we get
\begin{equation}\label{smf202}\begin{array}{l}
\ds{d \left(x_n+\mu_n\,\gamma^{-1}(x_n)\,v_n+\epsilon_n\,(\gamma^{-1}\sigma)(x_n)A^{-1}B\,z_{n}\right)(t)}\\[14pt]
\ds{\quad \quad = \mu_n\left[D\gamma^{-1}(x_n(t))v_n(t)\right]v_n(t)\,dt+(\gamma^{-1}b)(x_n(t))\,dt+(\gamma^{-1}\sigma)(x_n(t))A^{-1}B\,dw(t)}\\[14pt]
\ds{\quad \quad \quad \quad \quad \quad +\epsilon_n\,[D(\gamma^{-1}\sigma)(x_n(t))v_n(t)]A^{-1}B\,z_{n}(t)dt.}	
\end{array}\end{equation}
Moreover, by proceeding with  similar arguments, we can  check that
\begin{equation}\label{smf2011}d\bar{x}_n(t)=\mu_n\left[D\gamma^{-1}(\bar{x}_n(t))\,\bar{v}_n(t)\right]\bar{v}_n(t)\,dt+(\gamma^{-1}b)(\bar{x}_n(t))\,dt+(\gamma^{-1}\sigma)(\bar{x}_n(t))A^{-1}B\,dw(t).\end{equation}

Next,  since
\[\begin{array}{l}
\ds{\sigma(x_n(t))\,z_{n}(t)\,dt=-\e_n d(\sigma(x_n)A^{-1}z_{n})(t)}\\[14pt]
\ds{\quad\quad \quad \quad \quad \quad \quad \quad \quad \quad +\e_n [D\sigma(x_n(t))v_n(t)]A^{-1}z_{n}(t)\,dt+\sigma(x_n(t))A^{-1}B\,dw(t),}	
\end{array}\]
we have
\[\begin{array}{l}\ds{\sqrt{\mu_n} d\left(\sqrt{\mu_n}\,v_n+\sqrt{\frac{\e_n}{\mu_n}} \sigma(x_n)A^{-1}\tilde{z}_{n}\right)(t)}\\[16pt]
\ds{\quad =-\frac{\gamma(x_n(t))}{\sqrt{\mu_n}}\,\left(\sqrt{\mu_n}\,v_n(t)+\sqrt{\frac{\e_n}{\mu_n}} \sigma(x_n(t))A^{-1}\tilde{z}_{n}(t)\right)\,dt+b(x_n(t))\,dt+\sigma(x_n(t))A^{-1}B\,dw(t)}\\[14pt]
\ds{\quad \quad \quad \quad \quad \quad \quad \quad \quad +\e_n [D\sigma(x_n(t))v_n(t)]A^{-1}z_{n}(t)\,dt+\frac{\e_n}{\mu_n} \gamma(x_n(t))\sigma(x_n(t))A^{-1}z_{n}(t)\,dt.}
\end{array}
\]
Thus, if we define
\[\rho_n(t):=\sqrt{\mu_n}\,(v_n(t)-\bar{v}_n(t))+\sqrt{\frac{\e_n}{\mu_n}} \sigma(x_n(t))A^{-1}\tilde{z}_{n}(t),\]
it is not difficult to check that  
\[\begin{array}{l}
\ds{d\rho_n(t)=-\frac{\gamma(x_n(t))}{\mu_n}\,\rho_n(t)\,dt-\frac 1{\mu_n}\left(\gamma(x_n(t))-\gamma(\bar{x}_n(t))	\right)\sqrt{\mu_n}\,\bar{v}_n(t)\,dt}\\[14pt]
\ds{\quad \quad +\frac 1{\sqrt{\mu_n}}\,\left(b(x_n(t))-b(\bar{x}_n(t))\right)\,dt+\frac 1{\sqrt{\mu_n}}\,\left(\sigma(x_n(t))-\sigma(\bar{x}_n(t))\right)A^{-1}B\,dw(t)+\frac{1}{\mu_n}\,\theta_n(t)\,dt,}
\end{array}\]
where
\[\theta_n(t):=\sqrt{\frac{\e_n }{\mu_n}}\,\left(\,[D\sigma(x_n(t))\,\mu_n\,v_n(t)]A^{-1}\tilde{z}_{n}(t)+\gamma(x_n(t))\sigma(x_n(t))A^{-1}\tilde{z}_{n}(t)\right).\]
As a consequence of It\^o's formula, we get
\[\begin{array}{l}
\ds{\frac 12\, d\,\vert \rho_n(t)\vert^2=-\frac{1}{\mu_n}\,\langle \gamma(x_n(t))\rho_n(t),\rho_n(t)\rangle\,dt-\frac 1{\mu_n}\langle\left(\gamma(x_n(t))-\gamma(\bar{x}_n(t))	\right)\sqrt{\mu_n}\,\bar{v}_n(t),\rho_n(t)\rangle\,dt}\\[14pt]
\ds{\quad +\frac 1{\sqrt{\mu_n}}\,\langle b(x_n(t))-b(\bar{x}_n(t)),\rho_n(t)\rangle\,dt+\frac 1{\sqrt{\mu_n}}\,\langle \left(\sigma(x_n(t))-\sigma(\bar{x}_n(t))\right)A^{-1}B\,dw(t),\rho_n(t)\rangle}	\\[14pt]
\ds{\quad \quad +\frac 1{2\,\mu_n}\Vert \left(\sigma(x_n(t))-\sigma(\bar{x}_n(t))\right)A^{-1}B\Vert_2^2\,dt+\frac 1{\mu_n}\langle \theta_n(t),\rho_n(t)\rangle\,dt,}
\end{array}\]
and then, to to the strict positivity of $\gamma$ and the Lipschitz continuity of $\gamma$, $b$ and $\sigma$, we get
\[\begin{array}{l}
\ds{\frac 12\, d\,\vert \rho_n(t)\vert^2\leq -\frac{\gamma_0}{2\mu_n}\,\vert \rho_n(t)\vert^2\,dt+\frac c{\mu_n}\vert\gamma(x_n(t))-\gamma(\bar{x}_n(t))	\vert^2\,\vert\sqrt{\mu_n}\,\bar{v}_n(t)\vert^2\,dt}	\\[14pt]
\ds{\quad \quad +c\,\vert b(x_n(t))-b(\bar{x}_n(t)\vert^2\,dt+\frac c{\mu_n}\vert \sigma(x_n(t))-\sigma(\bar{x}_n(t))\vert^2\,dt }\\[14pt]
\ds{\quad \quad \quad \quad +\frac 1{\sqrt{\mu_n}}\,\langle \left(\sigma(x_n(t))-\sigma(\bar{x}_n(t))\right)A^{-1}B\,dw(t),\rho_n(t)\rangle+\frac 1{\mu_n}\,\vert \theta_n(t)\vert^2\,dt}\\[14pt]
\ds{\leq -\frac{\gamma_0}{2\mu_n}\,\vert \rho_n(t)\vert^2\,dt+\frac c{\mu_n}\vert x_n(t)-\bar{x}_n(t)	\vert^2\,\left(1+\vert\sqrt{\mu_n}\,\bar{v}_n(t)\vert^2\right)\,dt}\\[14pt]
\ds{\quad \quad \quad \quad +\frac 1{\sqrt{\mu_n}}\,\langle \left(\sigma(x_n(t))-\sigma(\bar{x}_n(t))\right)A^{-1}B\,dw(t),\rho_n(t)\rangle+\frac 1{\mu_n}\,\vert \theta_n(t)\vert^2\,dt.}
\end{array}\]
This implies for every $R>0$ there exists some $c_R>0$ such that
\[\begin{array}{l}
\ds{\mathbb{E}\vert \rho_n(t)\vert^2\leq \frac c{\mu_n}\int_0^t e^{-\frac{\gamma_0}{\mu_n}(t-s)}\mathbb{E}\,\left(\vert x_n(s)-\bar{x}_n(s)	\vert^2\,\left(1+\vert\sqrt{\mu_n}\,\bar{v}_n(s)\vert^2\right)\right)\,ds}\\[14pt]
\ds{\quad \quad \quad \quad +\frac c{\mu_n}\int_0^t e^{-\frac{\gamma_0}{\mu_n}(t-s)}\mathbb{E}\,\,\vert \theta_n(s)\vert^2\,ds\leq \frac {c_R}{\mu_n}\int_0^t e^{-\frac{\gamma_0}{\mu_n}(t-s)}\mathbb{E}\,\vert x_n(s)-\bar{x}_n(s)	\vert^2\,ds}	\\[14pt]
\ds{\quad \quad \quad \quad \quad +c\,\sup_{s \in\,[0,T]}\mathbb{E}\,\Big(\left(\vert x_n(s)\vert^2+\vert \bar{x}_n(s)	\vert^2\right)\,\left(1+\vert\sqrt{\mu_n}\,\bar{v}_n(s)\vert^2\right)\mathbb{I}_{D_{n,R}}\Big)+c\,\sup_{t \in\,[0,T]}\mathbb{E}\,\,\vert \theta_n(t)\vert^2,}
\end{array}\]
where
\[D_{n,R}:=\left\{\sup_{t \in\,[0,T]}\vert\sqrt{\mu_n}\,\bar{v}_n(t)\vert^2\geq R\right\}.\]

The same bounds that we have been proven for $x_n$ and $v_n$ hold for $\bar{x}_n$ and $\bar{v}_n$, as well. Then, thanks to \eqref{smf33} and \eqref{smf80}, for every $\eta>0$ there exists some $R_\eta>0$ such that 
\[c\,\sup_{s \in\,[0,T]}\mathbb{E}\,\Big(\left(\vert x_n(s)\vert^2+\vert \bar{x}_n(s)	\vert^2\right)\,\left(1+\vert\sqrt{\mu_n}\,\bar{v}_n(s)\vert^2\right)\mathbb{I}_{D_{n,R_\eta}}\Big)<\eta,\ \ \ \ \ n \in\,\mathbb{N}.\]
Moreover, since $\e_n/\mu_n\to 0$, thanks to \eqref{smf26} and \eqref{smf33}, there exists some $n_\eta \in\,\mathbb{N}$ such that
\[\sup_{t \in\,[0,T]}\mathbb{E}\,\vert \theta_n(t)\vert^2\leq \eta,\ \ \ \ \ n\geq n_\eta,\]
and
\[2\,\sqrt{\frac{\e_n}{\mu_n}} \sup_{t \in\,[0,T]}\mathbb{E}\,\vert \sigma(x_n(t))\tilde{z}_{n}(t)\vert^2\leq \eta,\ \ \ \ \ n\geq n_\eta.\]
Thus, we can conclude that
\begin{equation}\label{smf193}\begin{array}{l}
\ds{\mathbb{E}\vert \sqrt{\mu_n}\left(v_n(t)-\bar{v}_n(t)\right)\vert^2\leq 2\,\mathbb{E}\vert \rho_n(t)\vert^2+2\,\sqrt{\frac{\e_n}{\mu_n}} \sup_{t \in\,[0,T]}\mathbb{E}\,\vert \sigma(x_n(t))\tilde{z}_{n}(t)\vert^2}\\[14pt]
\ds{\quad \quad \quad \quad \leq \frac {c_{R_\eta}}{\mu_n}\int_0^t e^{-\frac{\gamma_0}{\mu_n}(t-s)}\mathbb{E}\,\vert x_n(s)-\bar{x}_n(s)	\vert^2\,ds+3\,\eta,\ \ \ \ \ n\geq n_\eta.}\end{array}\end{equation}

Now, if we set
\[\lambda_n(t):=(x_n(t)-\bar{x}_n(t))+\mu_n\,\gamma^{-1}(x_n(t))\,v_n(t)+\epsilon_n\,(\gamma^{-1}\sigma)(x_n(t))A^{-1}B\,z_{n}(t),\]
in view of \eqref{smf202}, we have
\[\begin{array}{l}
\ds{d\lambda_n(t)=\left((\gamma^{-1}b)({x}_n(t))-(\gamma^{-1}b)(\bar{x}_n(t))\right)\,dt+\left((\gamma^{-1}\sigma)(x_n(t))-(\gamma^{-1}\sigma)(\bar{x}_n(t))\right)A^{-1}B\,dw(t)}\\[14pt]
\ds{+ \left(\left[D\gamma^{-1}(x_n(t))\,\sqrt{\mu_n}\,v_n(t)\right]\sqrt{\mu_n}\,v_n(t)-\left[D\gamma^{-1}(\bar{x}_n(t))\,\sqrt{\mu_n}\,\bar{v}_n(t)\right]\sqrt{\mu_n}\,\bar{v}_n(t)\right)\,dt+\theta_n(t)\,dt,}\end{array}\]
where
\[\begin{array}{l}
\ds{\theta_n(t):=\epsilon_n[D(\gamma^{-1}\sigma)(x_n(t))v_n(t)]A^{-1}B\,z_{n}(t).}	
\end{array}\]
Due to the boundedness and Lipschitz-continuity of $D\gamma^{-1}$, for every $R>0$ we have
\[\begin{array}{l}
\ds{\vert\left[D\gamma^{-1}(x)\,v\right]v-\left[D\gamma^{-1}(\bar{x})\,\bar{v}\right]\bar{v}\vert \leq c\,\vert x-\bar{x}\vert \,\vert v\vert^2+c\,\vert v-\bar{v}\vert\left(\vert v\vert+\vert \bar{v}\vert\right)}	\\[14pt]
\ds{\quad \quad \quad \quad \quad \leq c_R\left(\vert x-\bar{x}\vert + \vert v-\bar{v}\vert\right)+c\left(\vert x\vert+\vert \bar{x}\vert+1\right)\left(1+\vert v\vert^2+\vert \bar{v}\vert^2\right)\mathbb{I}_{\left\{\vert v\vert \geq R\right\}\cup\left\{\vert \bar{v}\vert \geq R\right\}}.}
\end{array}\]
Hence
\[\begin{array}{l}
\ds{\mathbb{E}\sup_{s \in\,[0,t]}\vert\lambda_n(s)\vert^2 \leq c_{T,R}\int_0^t \mathbb{E}\sup_{r \in\,[0,s]} \vert x_n(r)-\bar{x}_n(r)\vert^2\,ds+c_{T,R}\int_0^t \mathbb{E}\,\vert \sqrt{\mu_n}\,v_n(s)- \sqrt{\mu_n}\,\bar{v}_n(s)\vert^2\,ds}\\[16pt]
\ds{+c_T\sup_{t \in\,[0,T]}\mathbb{E}\Big[\left(1+\vert x_n(t)\vert+\vert \bar{x}_n(t)\vert	\right)\left(1+\vert\sqrt{\mu_n}\,v_n(t)\vert^2+\vert\sqrt{\mu_n}\,\bar{v}_n(t)\vert^2\right)\mathbb{I}_{D_{n,R}(t)}\Big],}
\end{array}\]
where
\[D_{n,R}(t)=\left\{\vert v_n(t)\vert \geq R\right\}\cup\left\{\vert \bar{v}_n(t)\vert \geq R\right\}.\]
As the uniform bounds \eqref{smf33} and \eqref{smf80} holds also for $\bar{x}_n$ and $\bar{v}_n$, for every $\eta>0$ we can find $R_\eta>0$ such that
\[c_T\sup_{t \in\,[0,T]}\mathbb{E}\Big[\left(1+\vert x_n(t)\vert+\vert \bar{x}_n(t)\vert	\right)\left(1+\vert\sqrt{\mu_n}\,v_n(t)\vert^2+\vert\sqrt{\mu_n}\,\bar{v}_n(t)\vert^2\right)\mathbb{I}_{D_{n,R_\eta}(t)}\Big]<\eta,\ \ \ \ n \in\,\mathbb{N}.\]
Therefore, since 
\[\begin{array}{l}
\ds{\lim_{n\to\infty}\mathbb{E}\sup_{t \in\,[0,T]}\vert \mu_n\,\gamma^{-1}(x_n(t))\,v_n(t)+\epsilon_n\,(\gamma^{-1}\sigma)(x_n(t))A^{-1}B\,z_{n}(t)\vert^2=0,}
\end{array}
\]
we can fix $n_\eta \in\,\mathbb{N}$ such that
\[\begin{array}{l}
\ds{\mathbb{E}\sup_{s \in\,[0,t]}\vert x_n(s)-\bar{x}_n(s)\vert^2 \leq c_{T,R_\eta}\int_0^t \mathbb{E}\sup_{r \in\,[0,s]} \vert x_n(r)-\bar{x}_n(r)\vert^2\,ds}\\[14pt]
\ds{\quad \quad \quad \quad +c_{T,R_\eta}\int_0^t \mathbb{E}\,\vert \sqrt{\mu_n}\,v_n(s)- \sqrt{\mu_n}\,\bar{v}_n(s)\vert^2\,ds+2\eta,\ \ \ \ \ n\geq n_\eta,}\end{array}\]
and Gronwall's lemma gives
\begin{equation}
\label{smf195}
\mathbb{E}\sup_{s \in\,[0,t]}\vert x_n(s)-\bar{x}_n(s)\vert^2 \leq c_T\int_0^t \mathbb{E}\,\vert \sqrt{\mu_n}\,v_n(s)- \sqrt{\mu_n}\,\bar{v}_n(s)\vert^2\,ds+c_T\,\eta,\ \ \ \ \ n\geq n_\eta.
\end{equation}

Finally, if we replace \eqref{smf195} into \eqref{smf193}, we obtain 
\[\mathbb{E}\vert \sqrt{\mu_n}\left(v_n(t)-\bar{v}_n(t)\right)\vert^2\leq  c_T\int_0^t \mathbb{E}\vert \sqrt{\mu_n}\left(v_n(s)-\bar{v}_n(s)\right)\vert^2\,ds+c_T\,\eta,\ \ \ \ \ \ n\geq n_\eta,\]
which gives
\[\limsup_{n\to\infty}\sup_{t \in\,[0,T]}\mathbb{E}\vert \sqrt{\mu_n}\left(v_n(t)-\bar{v}_n(t)\right)\vert^2\leq c_T\eta.\]
By plugging this into \eqref{smf195}, due to the arbitrariness of $\eta>0$, we get \eqref{smf200}.

\section{Examples of drifts in turbulence fluids\label{subsect examples drift}%
}

Particles in turbulent flows are transported, displaced, deposited and
aggregated by complex interactions with the turbulent flow. The inertia of
particles, captured by the second-order nature of the equations of motion and
the positive mass constant $\mu$, is responsible for a number of effects. When
we take the limit as $\mu\rightarrow0$, \textit{certain forces show up as
drifts}. We discuss two of them here:\ a drift corresponding to the
centrifugal force, and the so-called turbo-phoretic drift. 


\subsubsection{A model of turbulence}

In this subsection, we introduce a general model of particle motion in a
turbulent fluid. The model takes the form%
\[\left\{\begin{array}{l}
\ds{\frac{dx}{dt}\left(  t\right) =v\left(  t\right)} \\[10pt]
\ds{\mu\frac{dv}{dt}\left(  t\right)     =-c_{0}k_{T}\left(  x\left(  t\right)
\right)  \Big(  v\left(  t\right)  -\overline{u}\left(  x\left(  t\right)
\right)  -\sum_{k\in K}\xi_{k}\left(  x\left(  t\right)  \right)  z^{k}\left(
t\right)  \Big)}  \\[14pt]
\ds{\epsilon dz^{k}\left(  t\right)     =-z^{k}\left(  t\right)  dt+dw^{k}\left(
t\right) } \\[10pt]
\ds{x\left(  0\right)     =x_{0},\qquad v\left(  0\right)  =v_{0},\qquad
z^{k}\left(  0\right)  =0.}
\end{array}\right.\]
Here $x\left(  t\right)  ,v\left(  t\right)  ,x_{0},v_{0}\in\mathbb{R}^{d}$,
$\mu,c_{0},\epsilon>0$, $K$ is a finite index set, $w^{k}\left(  t\right)  $
are independent real-valued Brownian motions (notice  that $z^{k}\left(
t\right)  $ are one-dimensional processes), $k_{T}\left(  x\right)  $ is a
bounded function on $\mathbb{R}^{d}$, $\overline{u}\left(  x\right)  $ and all
$\xi_{k}\left(  x\right)  $, $k\in K$, are bounded vector fields on
$\mathbb{R}^{d}$, $k_{T}$ and $\overline{u}$ are continuously differentiable functions 
with bounded derivatives, $\xi_{k}$ are twice continuously differentiable vector fields  with
bounded derivatives. We assume that there exists $k_{T}^{0}>0$ such that
\[
k_{T}\left(  x\right)  \geq k_{T}^{0}, \ \ \ \ \ \ \ x \in\,\mathbb{R}^d.%
\]
Hence, this system is a particular case of the
general one studied in Theorem \ref{teo2}. Moreover, we assume
that $\xi_{k}$ are divergence free%
\[
\operatorname{div}\xi_{k}=0.
\]
The purpose of this section is to provide a motivation for this model.

First, let us explain the meaning of the terms:\ $x\left(  t\right)  ,v\left(
t\right)  $ are particle position and velocity, $\mu$ is the particle mass,
$k_{T}\left(  x\right)  $ has the meaning of local average turbulent kinetic
energy, $\overline{u}\left(  x\right)  $ has the meaning of local average
fluid velocity, and finally the random vector field $\sum_{k\in K}\xi
_{k}\left(  x\right)  z^{k}\left(  t\right)  $ is our model of the turbulent
fluctuations of the fluid.

The choice of the structure
\begin{equation}
-c_{0}k_{T}\left(  x\left(  t\right)  \right)  \left(  v\left(  t\right)
-\overline{u}\left(  x\left(  t\right)  \right)  -\sum_{k\in K}\xi_{k}\left(
x\left(  t\right)  \right)  z^{k}\left(  t\right)  \right)
\label{our Stokes law}%
\end{equation}
for the force acting on the particle is the \textit{Stokes law} for the
interaction between particle and fluid, which has the form%
\begin{equation}
-\lambda\left(  v\left(  t\right)  -u\left(  x\left(  t\right)  ,t\right)
\right)  \label{classical Stokes law}%
\end{equation}
for a certain constant $\lambda>0$, where $u\left(  x,t\right)  $ is the fluid
velocity. Therefore, we have to explain why we consider a space-dependent
function $c_{0}k_{T}\left(  x\left(  t\right)  \right)  \geq c_{0}k_{T}^{0}>0$
in place of a constant $\lambda>0$ and why the fluid velocity $u\left(
x,t\right)  $ has been replaced by 
\[\overline{u}\left(  x\right)  +\sum_{k\in
K}\xi_{k}\left(  x\right)  z^{k}\left(  t\right).\]

Assume that $u\left(  x,t\right)  $ has the form%
\[
u\left(  x,t\right)  =\overline{u}\left(  x\right)  +u^{\prime}\left(
x,t\right)  +u^{\prime\prime}\left(  x,t\right)
\]
where $\overline{u}\left(  x\right)  $ is slowly varying (the large average
scale, much larger than the particle size), $u^{\prime}\left(  x,t\right)  $
varies fast, but at the same scale of the particle (call it the
\textit{intermediate scales}), and $u^{\prime\prime}\left(  x,t\right)  $
collects variations at a much smaller scale with respect to the particle (the
\textit{smallest turbulent scales}). We then introduce two different models of
$u^{\prime}\left(  x,t\right)  $ and $u^{\prime\prime}\left(  x,t\right)  $,
appropriate for the two different scales.

We model the variations at the particle-scale by the stochastic process%
\[
u^{\prime}\left(  x,t\right)  =\sum_{k\in K}\xi_{k}\left(  x\right)
z^{k}\left(  t\right)  .
\]
This choice is reasonable, following a large literature on synthetic models of
turbulent fluids (among many others, see \cite{Bec}, \cite{Kraichnan},
\cite{Majda}).

As for $u^{\prime\prime}\left(  x,t\right)  $, following Boussinesq intuition
\cite{Bousinnesq}, we adopt the approximation of Large Eddy theory
\cite{Berselli}: very small turbulence produces effects similar to an
additional viscosity, the turbulent viscosity $\nu_{T}\left(  x\right)
$, which is space dependent, in general. If we want to "close" the system, we
may follow the prescriptions of Smagorinsky theory and choose the turbulent
viscosity proportional to $\left\vert D\overline{u}\left(  x\right)
\right\vert $ or $\left\vert \operatorname{curl}\overline{u}\left(  x\right)
\right\vert $, but depending on specific applications, we may have other
information on $\nu_{T}\left(  x\right)  $. For instance, in pipe flows, the
function $\nu_{T}\left(  x\right)  $ depends on the distance from the center,
with good experimental knowledge of the profile. Recall moreover that the
parameter $\lambda$ in Stokes law depends on the viscosity. Up to other
constants, we have $\lambda\sim\nu$, where $\nu$ is the viscosity. Under the
Large Eddy theory modeling based on a space-dependent turbulent viscosity, we
have $\lambda\left(  x\right)  \sim\nu_{T}\left(  x\right)  $. This is the
origin, in our model, of a space-dependent damping coefficient in Stokes law.

Moreover, to be even closer to the statistical properties of turbulence, we may
use the approximation $\nu_{T}\left(  x\right)  \sim k_{T}\left(  x\right)  $
(up to factors, summarized in the constant $c_{0}$), where $k_{T}\left(
x\right)  $ is the \textit{turbulent kinetic energy}, namely the mean square
of the velocity of the smallest turbulent eddies. We therefore replace the
constant $\lambda$ in the classical Stokes law (\ref{classical Stokes law}) by
the space-dependent coefficient $c_{0}k_{T}\left(  x\left(  t\right)  \right)
$ in our model (\ref{our Stokes law}).

Simultaneously, we neglect the term $u^{\prime\prime}\left(  x,t\right)  $ in
the formula for $u\left(  x,t\right)  $ since its effect is already taken into
account by the factor $c_{0}k_{T}\left(  x\left(  t\right)  \right)  $.
Therefore, the fluid model becomes
\[
\overline{u}\left(  x\right)  +\sum_{k\in K}\xi_{k}\left(  x\right)
z^{k}\left(  t\right)
\]
that appears in our version (\ref{our Stokes law}) of the Stokes law.

\subsubsection{The general drift induced by the turbulent flow model}

As already remarked, the particle-in-turbulent-fluid model above fulfills all
the assumptions of the abstract theory, hence we shall now apply the
convergence Theorem 2.2. Assume that the mass $\mu=\mu\left(  \epsilon\right)
$ depends on the parameter $\epsilon$ and call $x_{\epsilon}\left(  t\right)
$ the particle position, depending on $\epsilon$.

\begin{Corollary}
If
\[
\lim_{\epsilon\rightarrow0}\frac{\mu\left(  \epsilon\right)  }{\epsilon
}=\alpha\in\left[  0,\infty\right]
\]
then $x_{\epsilon}\left(  t\right)  $ converges in probability, uniformly in
$t\in\left[  0,T\right]  $, to the solution $x^{\alpha}\left(  t\right)  $ of
the stochastic equation in Stratonovich form (see Section 2.1)%
\[\left\{\begin{array}{l}
\ds{dx^{\alpha}\left(  t\right)    =\overline{u}\left(  x^{\alpha}\left(
t\right)  \right)  dt+\sum_{k\in K}\xi_{k}\left(  x^{\alpha}\left(  t\right)
\right)  \circ dw^{k}\left(  t\right)  -b_{\alpha}\left(  x^{\alpha}\left(
t\right)  \right)  dt}\\[10pt]
\ds{x^{\alpha}\left(  0\right)     =x_{0}}%
\end{array}\right.\]
where%
\begin{equation}
b_{\alpha}\left(  x\right)  =\frac{1}{2}\frac{\alpha}{c_{0}k_{T}\left(
x\right)  +\alpha}\left(\,  \sum_{k\in K}D\xi_{k}\left(  x\right)  \xi
_{k}\left(  x\right)  +C\left(  x\right)  c_{0}^{-1}\nabla\log k_{T}\left(
x\right)  \right)  \label{Ito drift}%
\end{equation}
and where the matrix $C\left(  x\right)  $ is given by 
\[C\left(  x\right)
=\sum_{k\in K}\xi_{k}\left(  x\right)  \otimes\xi_{k}\left(  x\right).\] 
\end{Corollary}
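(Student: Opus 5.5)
The plan is to reduce the Corollary to the special scalar case of Section 2.1, namely system \eqref{system1-special} with $\sigma=\lambda\xi$, and then read off the drift from \eqref{smf155-bis}. We set
\[
\lambda(x):=c_0k_T(x),\qquad b(x):=c_0k_T(x)\,\overline{u}(x),\qquad \xi(x):=[\xi_k(x)]_{k\in K}\in\mathbb{R}^{d\times |K|},\qquad A=\text{Id},\qquad B=\text{Id}.
\]
With these choices the momentum equation of the turbulence model becomes exactly $\mu\dot v=-\lambda(x)(v-\xi(x)z)+b(x)$.

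The first task is to check Hypothesis \ref{hyp}. We have $\gamma=\lambda\,\text{Id}$ with $c_0k_T^0\leq\lambda\leq c_0\sup k_T$, and $\sigma=\lambda\xi$ is bounded. The hypothesis also needs $\sigma\in C^2$ with bounded derivatives. This requires $k_T\in C^2$, which is slightly more than the stated $C^1$, and Section 2.1 also asks $\lambda\in C^2$. I would either note this tacitly or, if needed, add it to the assumptions. The field $b$ is $C^1$ with bounded derivative, since $k_T$ and $\overline{u}$ are bounded and $C^1$. Theorem \ref{teo2} then gives convergence in probability to $x^\alpha$ solving \eqref{smf155-bis}, with $\lambda^{-1}b=\overline{u}$ and $\xi B\circ dw=\sum_k\xi_k\circ dw^k$.

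The main step is the computation of the drift $\frac12\frac{\alpha}{\lambda+\alpha}h$, using the formulas \eqref{smf150}, \eqref{smf151}, \eqref{smf153-bis} as given. For the first term, $\text{Tr}[D(\xi B)(\xi B)]$ means, componentwise, $\sum_k\partial_l(\xi_k)_i(\xi_k)_l=\sum_kD\xi_k\,\xi_k$. For the second term, $(\xi B)(\xi B)^\star=\sum_k\xi_k\otimes\xi_k=C$, and $\nabla\lambda/\lambda=\nabla\log k_T$.

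The obstacle is to reconcile the constant: \eqref{smf153-bis} yields $C\nabla\log k_T$, while \eqref{Ito drift} has $C\,c_0^{-1}\nabla\log k_T$. I would recheck the derivation of \eqref{smf153-bis} from \eqref{smf91}, namely the $\partial_l\gamma^{-1}_{ij}[\alpha N_\alpha]_{lj}$ term, which is $-\frac{\partial_l\lambda}{\lambda^2}\cdot\frac{\alpha\lambda}{2(\lambda+\alpha)}C_{il}$. I would also recheck the normalization of $M$ for $A=\text{Id}$. If the recomputation reproduces the $c_0^{-1}$, or shows it can be absorbed into the normalization of $k_T$, I would state that. Otherwise I would record the drift in the form delivered by \eqref{smf155-bis}.

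Divergence-freeness of the $\xi_k$ is not needed for the convergence itself. It only serves the later interpretation, for example identifying $\sum_kD\xi_k\xi_k$ with the Itô–Stratonovich correction and its centrifugal meaning.
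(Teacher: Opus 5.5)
Your route is the paper's own. You set $\lambda=c_0k_T$, $A=B=\mathrm{Id}$ and $\sigma=\lambda\xi$ (so that $\lambda^{-1}b=\overline{u}$), apply Theorem \ref{teo2} in the scalar-friction form \eqref{smf155-bis}, and translate $\mathrm{Tr}[D(\xi B)(\xi B)]=\sum_k D\xi_k\,\xi_k$ and $(\xi B)(\xi B)^\star=C$.

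Your computation is correct, and you should drop the hedge. The recomputation you propose gives $\partial_l(\lambda^{-1})[\alpha N_\alpha]_{li}=-\frac{\alpha}{2(\lambda+\alpha)}\big(C\nabla\lambda/\lambda\big)_i$, and $\nabla\lambda/\lambda=\nabla(c_0k_T)/(c_0k_T)=\nabla\log k_T$, so no $c_0^{-1}$ appears. The paper produces that factor only through the identity $\nabla\lambda/\lambda=c_0^{-1}\nabla\log k_T$ written in its proof, which is a slip.

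The factor also cannot be absorbed into a normalization of $k_T$. The model depends on $c_0,k_T$ only through $c_0k_T$. Under $(c_0,k_T)\mapsto(c_0/a,\,a k_T)$, the dynamics, $C$, $\nabla\log k_T$ and $\frac{\alpha}{c_0k_T+\alpha}$ are unchanged, while $c_0^{-1}$ becomes $a\,c_0^{-1}$. So the displayed $b_\alpha$ is not a function of the model unless $c_0=1$. The paper's own pipe-flow example later writes the turbophoretic term as $-\frac12\frac{\alpha}{c_0k_T+\alpha}\nabla\log k_T$, without $c_0^{-1}$.

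Rechecking $M$ cannot change this either: $M=\frac12 BB^\star$ for $A=\mathrm{Id}$ does not involve $c_0$. The minus sign in the Remark following the Lyapunov lemma is a typo, whereas \eqref{smf150-*} and \eqref{smf151-*} have the correct sign. What you actually prove, correctly, is the Corollary with $C(x)\nabla\log k_T(x)$ in place of $C(x)c_0^{-1}\nabla\log k_T(x)$.

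Your regularity remark is also correct, and the paper passes over it by simply declaring the model a particular case of Theorem \ref{teo2}. Item 1 of Hypothesis \ref{hyp} for $\sigma=c_0k_T\xi$ needs $k_T\in C^2$ with bounded derivatives. So does the standing requirement $\lambda\in C^2$ in Section 2.1, which is also what makes the limiting drift Lipschitz. This assumption should be added. You are also right that $\operatorname{div}\xi_k=0$ plays no role in the proof.
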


\begin{proof}
The comparison between the equation for $x^{\alpha}\left(  t\right)  $ in this
section and the one in Theorem \ref{teo2} is straightforward
for what concerns the terms $\sum_{k\in K}\xi_{k}\left(  x^{\alpha}\left(
t\right)  \right)  \circ dw^{k}\left(  t\right)  $ and $\overline{u}\left(
x^{\alpha}\left(  t\right)  \right)  dt$. In the expression for the drift
$b_{\alpha}\left(  x\right)  $ the factor $\frac{1}{2}\frac{\alpha}{c_{0}%
k_{T}\left(  x\right)  +\alpha}$ can be easily recognized. Let us only explain
why we have rewritten the term (of Section 2.1)
\[
\text{Tr}\left[  D\left(  \left(  \lambda^{-1}\sigma\right)  \left(  x\right)
B\right)  \left(  \lambda^{-1}\sigma\right)  \left(  x\right)  B\right]
+\left(  \sigma\left(  x\right)  B\right)  \left(  \sigma\left(  x\right)
B\right)  ^{\ast}\frac{\nabla\lambda\left(  x\right)  }{\lambda^{3}\left(
x\right)  }%
\]
in the form%
\[
\sum_{k\in K}D\xi_{k}\left(  x\right)  \xi_{k}\left(  x\right)  +C\left(
x\right)  c_{0}^{-1}\nabla\log k_{T}\left(  x\right)  .
\]
For this purpose, we have to make explicit the link between the notations of the
abstract theory and the example:\ the function $\lambda\left(  x\right)  $ is
$c_{0}k_{T}\left(  x\right)  $, the matrix $B$ is the identity, and the matrix
function $\sigma\left(  x\right)  $ has $\left\vert K\right\vert $ columns and
$d$ rows, and%
\[\left\{\begin{array}{l}
\ds{\sigma\left(  x\right)  z   =\lambda\left(  x\right)  \sum_{k\in K}\xi
_{k}\left(  x\right)  z^{k},\qquad z=\big(  z^{k}\big)  _{k\in K}}\\[10pt]
\ds{\left(  \sigma\left(  x\right)  ^{\ast}v\right)  _{k}   =\lambda\left(
x\right)  \xi_{k}\left(  x\right)  \cdot v,\qquad v\in\mathbb{R}^{d}.}
\end{array}\right.\]
Then, for the second term,
\[\frac{\nabla\lambda\left(  x\right)  }%
{\lambda\left(  x\right)  }=c_{0}^{-1}\nabla\log k_{T}\left(  x\right),\]
we have
\begin{align*}
\frac{\left(  \sigma\left(  x\right)  B\right)  \left(  \sigma\left(
x\right)  B\right)  ^{\ast}}{\lambda^{2}\left(  x\right)  }v  & =\sum_{k\in
K}\xi_{k}\left(  x\right)  \left(  \xi_{k}\left(  x\right)  \cdot v\right)
 =\left(  \sum_{k\in K}\xi_{k}\left(  x\right)  \otimes\xi_{k}\left(
x\right)  \right)  v=C\left(  x\right)  v
\end{align*}
and therefore, summarizing,
\[
\left(  \sigma\left(  x\right)  B\right)  \left(  \sigma\left(  x\right)
B\right)  ^{\ast}\frac{\nabla\lambda\left(  x\right)  }{\lambda^{3}\left(
x\right)  }=C\left(  x\right)  c_{0}^{-1}\nabla\log k_{T}\left(  x\right)  .
\]

Moreover, for the first term, with similar notations,
\[
\left(  \lambda^{-1}\sigma\right)  \left(  x\right)  Bz=\sum_{k\in K}\xi
_{k}\left(  x\right)  z^{k},\qquad D\left(  \left(  \lambda^{-1}%
\sigma\right)  \left(  x\right)  B\right)  z=\sum_{k\in K}D \xi_{k}\left(
x\right)  z^{k}%
\]
which gives, after a few computations,%
\[
\text{Tr}\left[  D\left(  \left(  \lambda^{-1}\sigma\right)  \left(  x\right)
B\right)  \left(  \lambda^{-1}\sigma\right)  \left(  x\right)  B\right]
=\sum_{k\in K}D\xi_{k}\left(  x\right)  \xi_{k}\left(  x\right)  .
\]

\end{proof}

We call the vector field $b_\alpha \left(  x\right)  $ the
\textit{inertial-It\^{o}-drift}. The motion of the particle contains three
components. One is the average motion $\overline{u}\left(  x^{\alpha}\left(
t\right)  \right)  dt$. The second one is the random motion \textit{as a
tracer} along the vector fields $\xi_{k}$:\ since the integration is in the
Stratonovich term (which heuristically corresponds to classical calculus), a
particle just subject to the term 
\[\sum_{k\in K}\xi_{k}\left(  x^{\alpha
}\left(  t\right)  \right)  \circ dw^{k}\left(  t\right)  \] would move under
the action of the fields $\xi_{k}$, just transported by them, in both
directions (namely by $\xi_{k}$ and $-\xi_{k}$). Finally, superimposed to
these two motions, which are clearly expected on the basis of elementary
arguments, there is the much less trivial drift $-b_{\alpha}\left(  x^{\alpha
}\left(  t\right)  \right)  dt$. It keeps memory of the inertia and its
interaction with the other elements of the motion.

\subsubsection{Centrifugal drift\label{Subsect just centrifugal}}

Consider the model with $\mu>0$. If the drivers $\xi_{k}\left(  x\right)  $
have rotational properties, the terms $\xi_{k}\left(  x\right)  z^{k}\left(
t\right)  $ produce fast rotations, alternated in the clockwise and
counterclockwise directions. A tracer, namely a particle without mass (without
inertia), would only rotate adhered to the fluid. But a particle with mass,
with inertia, should also perform a motion "outwards" with respect to the
center of rotation, due to centrifugal force.

In the zero-mass limit $\mu\rightarrow0$, the concept of inertia and
centrifugal force disappear, but the effect of such forces at the Newtonian
level remains in the "centrifugal drift". Our scaling limit theorem give us
the drift, $\sum_{k}D\xi_{k}\left(  x\right)  \xi_{k}\left(  x\right)  $,
which corresponds to this centrifugal effect. Let us show this by two
examples, one in this subsection and the other in the next one.

\begin{Proposition}
If $d=2$, $K$ is a singleton, $f$ is a bounded function with bounded
continuous second derivatives and
\[
\xi\left(  x\right)  =2f^{\prime}\left(  \left\vert x\right\vert ^{2}\right)
x^{\perp}%
\]
(where $x^{\perp}=\left(  -x_{2},x_{1}\right)  $ if $x=\left(  x_{1}%
,x_{2}\right)  $), then%
\[
-D\xi\left(  x\right)  \xi\left(  x\right)  =4f^{\prime}\left(  \left\vert
x\right\vert ^{2}\right)  ^{2}x.
\]

\end{Proposition}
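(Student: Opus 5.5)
The identity follows from a direct computation of the directional derivative of $\xi$ along itself. Throughout I read $D\xi(x)\xi(x)$ as in the Corollary, namely as the derivative of the vector field $\xi$ at $x$ in the direction $\xi(x)$, i.e.\ $(D\xi(x)\xi(x))_i=\partial_l\xi_i(x)\,\xi_l(x)$.

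The key observation is that $x\mapsto x^\perp$ is the linear map given by the rotation matrix $J=\begin{pmatrix}0&-1\\1&0\end{pmatrix}$. Two properties of $J$ are used:
\begin{itemize}
\item $J^2=-I$, so $(y^\perp)^\perp=-y$ for every $y\in\mathbb{R}^2$;
\item $y\cdot y^\perp=0$ for every $y\in\mathbb{R}^2$.
\end{itemize}

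\emph{Step 1: compute $D\xi(x)v$.} Write $\xi(x)=\phi(x)\,Jx$ with $\phi(x)=2f'(|x|^2)$. Since $f$ has bounded continuous second derivatives, $\phi$ is $C^1$ with
\[
D\phi(x)v=4f''(|x|^2)\,(x\cdot v).
\]
By the product rule, for every $v\in\mathbb{R}^2$,
\[
D\xi(x)v=4f''(|x|^2)\,(x\cdot v)\,x^\perp+2f'(|x|^2)\,v^\perp .
\]

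\emph{Step 2: substitute $v=\xi(x)=2f'(|x|^2)x^\perp$.} The first term vanishes, because $x\cdot\xi(x)=2f'(|x|^2)\,x\cdot x^\perp=0$. This orthogonality is exactly why the potentially awkward $f''$ contribution drops out. The second term gives
\[
D\xi(x)\xi(x)=2f'(|x|^2)\,\big(2f'(|x|^2)\,x^\perp\big)^\perp=4f'(|x|^2)^2\,(x^\perp)^\perp=-4f'(|x|^2)^2\,x .
\]
Changing sign yields the claim $-D\xi(x)\xi(x)=4f'(|x|^2)^2x$.

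There is no real obstacle here. The only point requiring care is fixing the convention for $D\xi\,\xi$, since the transposed contraction would not produce the stated formula. As a consistency check, $\operatorname{div}\xi=4f''(|x|^2)\,x\cdot x^\perp+2f'(|x|^2)\operatorname{tr}J=0$, so $\xi$ satisfies the standing divergence-free assumption of the model. The resulting drift $-b_\alpha$ then contains the term $\frac{1}{2}\frac{\alpha}{c_0k_T+\alpha}\,4f'^2x$, which points radially outward, as the centrifugal interpretation predicts.
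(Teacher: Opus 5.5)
Your computation is correct and is essentially the paper's proof: both differentiate $\xi$ directly and contract with $\xi$ using the convention $(D\xi\,\xi)_i=\partial_l\xi_i\,\xi_l$. The only difference is in presentation. The paper writes out the $2\times 2$ Jacobian entrywise and lets the $f''$ terms cancel in the product, whereas your decomposition $D\xi(x)=4f''(|x|^2)\,x^\perp\otimes x+2f'(|x|^2)J$ shows directly that they vanish because $x\cdot x^\perp=0$.
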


Namely, in the drift $-b_{\alpha}\left(  x^{\alpha}\left(  t\right)  \right)
dt$, the term $-D\xi\left(  x^{\alpha}\left(  t\right)  \right)  \xi\left(
x^{\alpha}\left(  t\right)  \right)  dt$ contributes with a radial force
directed outward, corresponding to the intuition that the inertial particle,
due to rotation and inertia, has to move outward.

\begin{proof}
We have%
\[
D\xi\left(  x\right)  =\left(
\begin{array}
[c]{cc}%
-4f^{\prime\prime}\left(  \left\vert x\right\vert ^{2}\right)  x_{1}x_{2} &
-4f^{\prime\prime}\left(  \left\vert x\right\vert ^{2}\right)  x_{2}%
^{2}-2f^{\prime}\left(  \left\vert x\right\vert ^{2}\right)  \\[10pt]
4f^{\prime\prime}\left(  \left\vert x\right\vert ^{2}\right)  x_{1}%
^{2}+2f^{\prime}\left(  \left\vert x\right\vert ^{2}\right)   & 4f^{\prime
\prime}\left(  \left\vert x\right\vert ^{2}\right)  x_{1}x_{2}%
\end{array}
\right)
\]%
and
\begin{align*}
&  D\xi\left(  x\right)  \xi\left(  x\right)  =\left(
\begin{array}
[c]{c}%
4f^{\prime\prime}x_{1}x_{2}2f^{\prime}x_{2}-4f^{\prime\prime}x_{2}%
^{2}2f^{\prime}x_{1}-2f^{\prime}2f^{\prime}x_{1}\\[10pt]
-2f^{\prime}x_{2}4f^{\prime\prime}x_{1}^{2}-2f^{\prime}x_{2}2f^{\prime
}+4f^{\prime\prime}x_{1}x_{2}2f^{\prime}x_{1}%
\end{array}
\right)    =-4f^{\prime}\left(  \left\vert x\right\vert ^{2}\right)  ^{2}x
\end{align*}
where, to shorten the notations, in the matrix we have written only
$f^{\prime}$ and $f^{\prime\prime}$ in place of $f^{\prime}(  \left\vert
x\right\vert ^{2})  $ and $f^{\prime\prime}(  \left\vert
x\right\vert ^{2})  $ respectively. 
\end{proof}

\subsubsection{Centrifugal drift with concentration}

Associated with the centrifugal drift, there is often a concentration effect of
particles in sub-regions of space. The heuristic reason is the following one.
Assume that, instead of an isolated vortex as in the example of Section
\ref{Subsect just centrifugal}, the full space is occupied by vortices. Each
one has a tendency to "expel" inertial particles, exactly as in the case of a
single vortex. Thus, particles will be trapped in an intermediate region
between the vortices. This is the effect we want to describe in a very simple
geometrical case. 

Consider the case when $d=2$, $\overline{u}=0$, $K$ is a singleton,
$c_{0}k_{T}\left(  x\right)  $ is a constant $\lambda$, and we have a single
field $\xi_{k}\left(  x\right)  $, that we still index by $k$ with the meaning
of a wave number $k=\left(  k_{1},k_{2}\right)  $:
\[\left\{\begin{array}{l}
\ds{dx^{\alpha}\left(  t\right)     =\xi_{k}\left(  x^{\alpha}\left(  t\right)
\right)  \circ dw\left(  t\right)  -b_{\alpha}\left(  x^{\alpha}\left(
t\right)  \right)  dt}\\[10pt]
\ds{x^{\alpha}\left(  0\right)    =x_{0}}%
\end{array}\right.\]
where%
\[
b_{\alpha}\left(  x\right)  =\frac{1}{2}\frac{\alpha}{\lambda+\alpha}D\xi
_{k}\left(  x\right)  \xi_{k}\left(  x\right)  .
\]
Choose the fields studied by Greengard-Thomann:%
\[\psi_{k}\left(  x\right)   =\sin\left(  k_{1}x_{1}\right)  \cos\left(
k_{2}x_{2}\right)\]  
and
\[\xi_{k}\left(  x\right)     =\nabla^{\perp}\psi_{k}\left(  x\right)  =\left(
k_{2}\sin\left(  k_{1}x_{1}\right)  \sin\left(  k_{2}x_{2}\right)  ,k_{1}%
\cos\left(  k_{1}x_{1}\right)  \cos\left(  k_{2}x_{2}\right)  \right).\]


The vector field $\xi_{k}\left(  x\right)  $ is tangential to the level curves of $\psi_k$.
Hence, one can prove that the dynamics without drift%
\[
dy^{\alpha}\left(  t\right)  =\xi_{k}\left(  y^{\alpha}\left(  t\right)
\right)  \circ dw\left(  t\right)
\]
has trajectories super-imposed on these level curves. 

On the contrary, the drift $-b_{\alpha}\left(  x\right)  $ is not tangential
to such curves and moves $x^{\alpha}\left(  t\right)  $ "from the inside
towards the outside" of each vortex (but without crossing the vertical and
horizontal separatrix lines). Let us formalize this intuition. 

\begin{Proposition}
The process $\psi_{k}\left(  x^{\alpha}\left(  t\right)  \right)  $ is
pathwise monotone, decreasing when it lives in a region with $\psi_{k}>0$,
increasing where $\psi_{k}<0$. 
\end{Proposition}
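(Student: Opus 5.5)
The plan is to apply the Stratonovich chain rule to the smooth function $\psi_k$ along the limit process $x^\alpha$. This is legitimate because $\psi_k$ is $C^\infty$, $\xi_k$ and $b_\alpha$ are smooth with bounded derivatives, and the Stratonovich integral obeys the classical chain rule. Equivalently, one can write the equation in It\^o form and apply It\^o's formula; the It\^o correction and the Stratonovich-to-It\^o drift then recombine into the same expression. We get
\[
d\psi_k(x^\alpha(t))=\nabla\psi_k(x^\alpha(t))\cdot\xi_k(x^\alpha(t))\circ dw(t)-\nabla\psi_k(x^\alpha(t))\cdot b_\alpha(x^\alpha(t))\,dt .
\]
Since $\xi_k=\nabla^\perp\psi_k$, we have $\nabla\psi_k\cdot\xi_k\equiv 0$. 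Hence the martingale part vanishes identically and $t\mapsto\psi_k(x^\alpha(t))$ is, pathwise, a $C^1$ function of time.

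The key step is to compute the sign of $\nabla\psi_k\cdot D\xi_k\,\xi_k$. I would avoid brute-force expansion and use the identity obtained by differentiating $\xi_k\cdot\nabla\psi_k\equiv0$ in the direction $\xi_k$:
\[
0=\xi_k\cdot\nabla(\xi_k\cdot\nabla\psi_k)=(D\xi_k\,\xi_k)\cdot\nabla\psi_k+\xi_k^\star D^2\psi_k\,\xi_k ,
\]
so that $\nabla\psi_k\cdot D\xi_k\,\xi_k=-\xi_k^\star D^2\psi_k\,\xi_k$. Write $s_i,c_i$ for $\sin(k_ix_i),\cos(k_ix_i)$. Then $\psi_k=s_1c_2$, and
\[
\partial_{11}\psi_k=-k_1^2\psi_k,\qquad \partial_{22}\psi_k=-k_2^2\psi_k,\qquad \partial_{12}\psi_k=-k_1k_2c_1s_2 .
\]
Also $\xi_k=(k_2s_1s_2,\,k_1c_1c_2)$. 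Expanding the quadratic form, the cross term becomes $2k_1^2k_2^2c_1^2s_2^2\,s_1c_2=2k_1^2k_2^2c_1^2s_2^2\,\psi_k$. This gives
\[
\xi_k^\star D^2\psi_k\,\xi_k=-k_1^2k_2^2\,G(x)\,\psi_k(x),\qquad G:=s_1^2s_2^2+c_1^2c_2^2+2c_1^2s_2^2\ge0 .
\]
This short computation is the only delicate point. The main risk is a sign or factor error, so I would double-check it against the identity above.

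Combining, we obtain the pathwise linear ODE
\[
\frac{d}{dt}\psi_k(x^\alpha(t))=-\frac12\,\frac{\alpha}{\lambda+\alpha}\,k_1^2k_2^2\,G(x^\alpha(t))\,\psi_k(x^\alpha(t)) .
\]
Its solution is
\[
\psi_k(x^\alpha(t))=\psi_k(x_0)\exp\Big(-\frac{\alpha k_1^2k_2^2}{2(\lambda+\alpha)}\int_0^tG(x^\alpha(s))\,ds\Big).
\]
The exponential factor is non-increasing in $t$ and strictly positive, so $\psi_k(x^\alpha(t))$ keeps the sign of $\psi_k(x_0)$ and never crosses the separatrices $\{\psi_k=0\}$. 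Moreover, $|\psi_k(x^\alpha(t))|$ is non-increasing. Therefore $\psi_k(x^\alpha(t))$ decreases where $\psi_k>0$ and increases where $\psi_k<0$; if $\psi_k(x_0)=0$ it stays at zero. This is the claimed pathwise monotonicity, and it holds strictly wherever $G>0$ and $\alpha>0$.
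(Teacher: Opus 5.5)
Your argument is correct and follows the same route as the paper: the Stratonovich chain rule, the vanishing of the noise term because $\nabla\psi_k\cdot\xi_k\equiv 0$, and reduction to a linear ODE for $\psi_k(x^\alpha(t))$. Your identity $\nabla\psi_k\cdot D\xi_k\,\xi_k=-\xi_k^\star D^2\psi_k\,\xi_k$ is just a tidier way to organize the computation; note that the cross term actually carries a minus sign, which your final formula already reflects.

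Your factor is in fact the correct one, $G=\cos^2(k_1x_1)+\sin^2(k_2x_2)$. The paper's displayed identity $\nabla\psi_k\cdot D\xi_k\xi_k=(k_1k_2)^2\psi_k$ drops this nonnegative factor, and its final ODE also drops $\frac12\frac{\alpha}{\lambda+\alpha}$. This does not change the monotonicity conclusion. It does mean strictness fails where $G=0$, namely at the vortex centres, exactly as your last sentence allows.
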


The trajectories $x^{\alpha}\left(  t\right)  $ (the "particles") tend to
concentrate in the regions
\[
\Lambda_{\delta}=\left\{  x:\psi_{k}\left(  x\right)  \in\left[
-\delta,\delta\right]  \right\}
\]
that are (relatively) thin regions around the vertical and horizontal
separatrix lines. 

\begin{proof}
By It\^{o} formula in Stratonovich form,%
\begin{align*}
d\psi_{k}\left(  x^{\alpha}\left(  t\right)  \right)    & =\left(  \nabla
\psi_{k}\right)  \left(  x^{\alpha}\left(  t\right)  \right)  \cdot\xi
_{k}\left(  x^{\alpha}\left(  t\right)  \right)  \circ dw\left(  t\right)
-\left(  \nabla\psi_{k}\right)  \left(  x^{\alpha}\left(  t\right)  \right)
\cdot b_{\alpha}\left(  x^{\alpha}\left(  t\right)  \right)  dt\\
& =-\left(  \nabla\psi_{k}\right)  \left(  x^{\alpha}\left(  t\right)
\right)  \cdot b_{\alpha}\left(  x^{\alpha}\left(  t\right)  \right)  dt
\end{align*}
because $\xi_{k}=\nabla^{\perp}\psi_{k}$. Moreover,%
\[
D\xi_{k}\left(  x\right)  =\left(
\begin{array}
[c]{cc}%
k_{1}k_{2}\cos\left(  k_{1}x_{1}\right)  \sin\left(  k_{2}x_{2}\right)   &
k_{2}^{2}\sin\left(  k_{1}x_{1}\right)  \cos\left(  k_{2}x_{2}\right)  \\[10pt]
-k_{1}^{2}\sin\left(  k_{1}x_{1}\right)  \cos\left(  k_{2}x_{2}\right)   &
-k_{1}k_{2}\cos\left(  k_{1}x_{1}\right)  \sin\left(  k_{2}x_{2}\right),
\end{array}
\right)
\]%
and \[
D\xi_{k}\left(  x\right)  \xi_{k}\left(  x\right)  =k_{1}k_{2}\left(
k_{2}\sin\left(  k_{1}x_{1}\right)  \cos\left(  k_{1}x_{1}\right)  ,-k_{1}%
\sin\left(  k_{2}x_{2}\right)  \cos\left(  k_{2}x_{2}\right)  \right),
\]%
and 
\[
\nabla\psi_{k}\left(  x\right)  =\left(  k_{1}\cos\left(  k_{1}x_{1}\right)
\cos\left(  k_{2}x_{2}\right)  ,-k_{2}\sin\left(  k_{1}x_{1}\right)
\sin\left(  k_{2}x_{2}\right)  \right)  .
\]
This implies
\begin{align*}
\nabla\psi_{k}\left(  x\right)  \cdot D\xi_{k}\left(  x\right)  \xi_{k}\left(
x\right)    & =\left(  k_{1}k_{2}\right)  ^{2}\sin\left(  k_{1}x_{1}\right)
\cos\left(  k_{2}x_{2}\right)   =\left(  k_{1}k_{2}\right)  ^{2}\psi_{k}\left(  x\right),
\end{align*}
and hence%
\[
d\psi_{k}\left(  x^{\alpha}\left(  t\right)  \right)  =-\left(  k_{1}%
k_{2}\right)  ^{2}\psi_{k}\left(  x\right)  dt.
\]
This implies the claims.
\end{proof}

Another indicator related to concentration, but of a different type, is given by
the sign of the divergence of $-b_{\alpha}\left(  x\right)  $ (the divergence
of the Stratonovich noise is zero). Since
\[
D\xi_{k}\left(  x\right)  \xi_{k}\left(  x\right)  =k_{1}k_{2}\left(
k_{2}\sin\left(  k_{1}x_{1}\right)  \cos\left(  k_{1}x_{1}\right)  ,-k_{1}%
\sin\left(  k_{2}x_{2}\right)  \cos\left(  k_{2}x_{2}\right)  \right)
\]
we have%
\begin{align*}
-\operatorname{div}b_{\alpha}\left(  x\right)    & =-\left(  k_{1}%
k_{2}\right)  ^{2}\left[  \cos^{2}\left(  k_{1}x_{1}\right)  -\sin^{2}\left(
k_{1}x_{1}\right)  -\cos^{2}\left(  k_{2}x_{2}\right)  +\sin^{2}\left(
k_{2}x_{2}\right)  \right]  \\[10pt]    
& =-2\left(  k_{1}k_{2}\right)  ^{2}\left[  \sin^{2}\left(  k_{2}x_{2}\right)
-\sin^{2}\left(  k_{1}x_{1}\right)  \right]  .
\end{align*}
Where the sign of $-\operatorname{div}b_{\alpha}\left(  x\right)  $ is
negative, the trajectories $x^{\alpha}\left(  t\right)  $ that are close each
others become closer, namely, there is a form of concentration, not explicit in
the original second-order model driven only by divergence-free fields. The
level curves of $-\operatorname{div}b_{\alpha}\left(  x\right)  $ are given in
the next picture, in the example $k=\left(  1,1\right)  $. %


\begin{Remark}
{\em There are (important) cases where the centrifugal effect is cancelled by other
elements. Take $d=2$, $\lambda$ constant,%
\[
\xi_{k}\left(  x\right)  =\frac{k^{\perp}}{\left\vert k\right\vert }e^{ik\cdot
x}%
\]
with little abuse of complex-valued notations, just to make the exposition
shorter. These are translational fields, each of them directed only in one
direction, without rotation. The cumulative effect could be of a rotation but
in a totally random position and direction. Using the complex notations,%
\[
C\left(  x,y\right)     =\sum_{k}\xi_{k}\left(  x\right)  \otimes
\overline{\xi_{k}\left(  y\right)  }=\sum_{k}\frac{k^{\perp}\otimes k^{\perp}%
}{\left\vert k\right\vert ^{2}}e^{ik\cdot\left(  x-y\right)  },\]
so that
\[C\left(  x\right)    =\sum_{k}\frac{k^{\perp}\otimes k^{\perp}}{\left\vert
k\right\vert ^{2}},\ \ \ \ \ 
\left(  \nabla\cdot C\right)  _{i}\left(  x\right)     =0,\qquad
\operatorname{div}b\left(  x\right)  =0.\]
Therefore, no centrifugal effect emerges.}
\end{Remark}

\subsubsection{Turbo-phoretic drift}

The second main drift is behind the turbo phoretic effect. It is a drift specific to
turbulence and again due to inertia:\ particles in turbulent fluids, in the
presence of a non-zero gradient of turbulence intensity $\nabla k_{T}\left(
x\right)  $, have a tendency to move in the regions of lower turbulence
intensity. The phenomenon can be understood by analogy with Brownian motion: very small-scale turbulence acts on particles much as molecules act on a Brownian particle, kicking them randomly in all directions. When this random agitation is more intense on one sub-region than on its complement, the imbalance produces a net force: the particle is kicked out of the high-intensity region more vigorously than it is kicked back in from the surroundings, and on average it drifts toward regions of lower turbulent intensity. The mechanism is analogous to that of a pressure gradient. It is
similar to the concept of pressure gradient. In order to apply, it is
necessary that the turbulence eddies are very small compared to the particle.
Therefore, the turbo phoretic effect applies to particles that are relatively
larger with respect to the turbulent eddies. This corresponds to our
multiscale assumption that $u^{\prime\prime}\left(  x,t\right)  $ (the term
responsible for $k_{T}\left(  x\right)  $) corresponds to scales much smaller
than the particle scale. 

\begin{claim}
{\em The term, in the drift $-b_{\alpha}\left(  x\right)  $, corresponding to the
turbo-phoretic effect is the term$\,$%
\[
-C\left(  x\right)  c_{0}^{-1}\nabla\log k_{T}\left(  x\right).
\]
}
\end{claim}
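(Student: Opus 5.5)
To justify the Claim, I would split the inertial-It\^o-drift $b_\alpha$ of the Corollary into its two summands. The first, $\sum_k D\xi_k\xi_k$, survives when $k_T$ is constant and was identified above as the centrifugal contribution. It vanishes identically for the translational fields of the Remark, even though turbophoresis should still be present there. The second summand, $\frac12\frac{\alpha}{c_0k_T+\alpha}C\,c_0^{-1}\nabla\log k_T$, is the only part of $b_\alpha$ that depends on the gradient of $k_T$, and it vanishes whenever $k_T$ is constant. So the task is twofold. First, I show that this term is precisely the contribution generated by the spatial variability of the friction $\lambda=c_0k_T$. Second, I show that its sign pushes particles towards lower turbulent intensity. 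Throughout, the positive prefactor $\frac12\frac{\alpha}{c_0k_T+\alpha}$ is kept implicit, as in the statement.

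For the origin of the term, I go back to formula \eqref{smf91} for $f_\alpha$. In the scalar case $\gamma=\lambda\,\mathrm{Id}$, the first summand $\partial_l\gamma^{-1}_{ij}[\alpha N_\alpha]_{lj}$ equals $-\lambda^{-2}(\alpha N_\alpha)\nabla\lambda$. This is the kinetic-energy term coming from $\mu\int[D\gamma^{-1}(x)u]u\,dr$ in \eqref{smf63}, that is, from the gradient of the inverse friction alone. Combining it with the $\nabla\lambda$ part of the second summand, through the computation leading to \eqref{smf153-bis} with $\sigma=\lambda\xi$, gives the term $\frac12\frac{\alpha}{\lambda+\alpha}(\xi B)(\xi B)^\star\nabla\lambda/\lambda$. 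Using the dictionary from the proof of the Corollary, this is exactly $\frac12\frac{\alpha}{c_0k_T+\alpha}C\,c_0^{-1}\nabla\log k_T$. I would emphasize that it disappears at $\alpha=0$ (tracer limit, no inertia) and is maximal at $\alpha=\infty$. This confirms that it is an inertial effect caused by the gradient of the damping.

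For the physical direction, I observe that $C(x)=\sum_k\xi_k\otimes\xi_k$ is symmetric and nonnegative definite. Hence
\[
\big(-C\,c_0^{-1}\nabla\log k_T\big)\cdot\nabla k_T=-\frac{1}{c_0k_T}\sum_k(\xi_k\cdot\nabla k_T)^2\le0 .
\]
The drift therefore has a nonpositive component along $\nabla k_T$ and pushes particles down the gradient of turbulent kinetic energy. It does so strictly wherever some $\xi_k$ is not orthogonal to $\nabla k_T$. In the isotropic situation of the Remark, where $C$ is a positive multiple of the identity, the drift is simply $-c\,\nabla\log k_T$, a pure gradient flow towards low $k_T$. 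To make the statement rigorous in the same spirit as the concentration Proposition, I would apply It\^o's formula to $k_T(x^\alpha(t))$ and isolate the contribution of this drift, namely $-\frac12\frac{\alpha}{c_0k_T+\alpha}\,c_0^{-1}k_T^{-1}\sum_k(\xi_k\cdot\nabla k_T)^2\,dt\le0$.

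The main obstacle is that the Claim is partly interpretive: the remaining It\^o terms (the centrifugal one and the Stratonovich noise) also contribute to $d\,k_T(x^\alpha)$ and are not sign-definite. Monotonicity therefore cannot be shown pathwise, as it was for $\psi_k$. I would settle for the decomposition and the sign computation above, which identify the term uniquely. These are the parts of $b_\alpha$ that vanish for constant $k_T$ and have a definite sign relative to $\nabla k_T$.
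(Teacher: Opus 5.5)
Your proposal is correct and follows the paper's own justification. The Claim rests on splitting $b_\alpha$ in the Corollary into the centrifugal part $\sum_k D\xi_k\xi_k$ and the part carrying $\nabla k_T$, with the direction read off as in the Remark that follows. Your identity $\big(-C\,c_0^{-1}\nabla\log k_T\big)\cdot\nabla k_T=-\frac{1}{c_0k_T}\sum_k(\xi_k\cdot\nabla k_T)^2\le 0$ makes the paper's principal-component heuristic for non-identity $C$ rigorous.

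Two small corrections. First, with $\sigma=\lambda\xi$ one has $\gamma^{-1}\sigma=\xi$, so the $L_\alpha$-summand of $f_\alpha$ contains no $\nabla\lambda$ and there is nothing to combine: the turbophoretic term is exactly the kinetic-energy summand $\partial_l\gamma^{-1}_{ij}[\alpha N_\alpha]_{lj}$. Second, since $\nabla\lambda/\lambda=\nabla\log(c_0k_T)=\nabla\log k_T$, the factor $c_0^{-1}$ you inherit from the paper's dictionary is spurious, though this does not affect your sign argument.
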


\begin{Remark}
{\em If $C\left(  x\right)  $ is just the identity matrix, the vector $-\nabla\log
k_{T}\left(  x\right)  $ (recall our convention that the drift is $-b_{\alpha
}$) points in the direction of lower values of $k_{T}\left(  x\right)  $, as
just described. If $C\left(  x\right)  $, at some $x$, is not the identity, we
may understand the role of multiplication by the matrix $C\left(
x\right)  $ by Principal Component Analysis:\ if the vector $\nabla\log
k_{T}\left(  x\right)  $ is aligned with the eigenvector of $C\left(
x\right)  $ having the highest eigenvalue (the so-called principal component),
then the effect of $\nabla\log k_{T}\left(  x\right)  $ is amplified,
relative to what happens if it is aligned with other eigenvectors or just
generic. The principal component is the direction in which the random
fluctuations are more intense, hence pushing the particle faster towards the
low-level regions of $k_{T}\left(  x\right)  $.}
\end{Remark}

As an example of turbo phoresis, consider a turbulent pipe flow. Strictly
speaking, this example deviates from our assumption that the model lives in
full space, but extending the results to such a simple geometry would require
only minor modifications. 

Consider an infinite 2D pipe, namely a set in $\mathbb{R}^{2}$ of the form%
\[
D=\mathbb{R}\times\left[  -1,1\right]  .
\]
We write $x=\left(  x_{1},x_{2}\right)  $. We assume the fluid is turbulent in
the pipe and composed of an average part $\overline{u}\left(  x\right)  $ of
the form%
\[
\overline{u}\left(  x\right)  =\left(  \overline{v}\left(  x_{2}\right)
,0\right)
\]
with
\[
\overline{v}\left(  \pm1\right)     =0,\ \ \ \ \ 
\overline{v}\left(  x_{2}\right)    >0,\ \text{ for all }x_{2}\in\left(
-1,1\right).
\]




The turbulent kinetic energy $k_{T}\left(  x\right)  $ does not have an
explicit form, but its shape looks qualitatively similar, if the radius of the
pipe is not so large (otherwise there is a depletion in the central zone.

Assume that the random part $u^{\prime}\left(  x,t\right)  $ of the turbulent
fluid, the one at a scale comparable with the particle scale, is essentially
space-homogeneous; this is just to simplify the exposition. In such a case, we
may assume $C\left(  x\right)  =Id$.

Under these conditions, the inertial-It\^{o}-drift $\,-b_{\alpha}\left(
x\right)  $ does not contain the component \[-\sum_{k}D\xi_{k}\left(  x\right)
\xi_{k}\left(  x\right),\]
 but contains a non-trivial component%
\[
-\frac{1}{2}\frac{\alpha}{c_{0}k_{T}\left(  x\right)  +\alpha}\nabla\log
k_{T}\left(  x\right)  .
\]
Taking in account the sign, we see that particles drift in the direction of
low values of $k_{T}\left(  x\right)  $. It means they drift towards the
boundary of the pipe. This is a well-known experimental fact \cite{Reeks},
\cite{Johnson}. It is interesting to remark that such an effect is observed for
heavy particles, while the lightest ones may remain in the central zone; this
corresponds to our assumption that the particle scale is much larger than the
scale of the smallest turbulent eddies $u^{\prime\prime}\left(  x,t\right)  $,
which therefore could be modeled by means of the Boussinesq approximation.
Turbo phoresis may also contribute to particle clustering and thus to increase
the rate of coalescence \cite{Delillo}.

\begin{Remark}
{\em Both the term $\sum_{k}D\xi_{k}\left(  x\right)  \xi_{k}\left(  x\right)  $
corresponding to the centrifugal force and the term $C\left(  x\right)
\nabla\log k_{T}\left(  x\right)  $ corresponding to the turbo phoretic effect
are multiplied by the factor $\frac{1}{2}\frac{\alpha}{c_{0}k_{T}\left(
x\right)  +\alpha}$. It is zero in the case when inertia (namely $\mu$) is
infinitesimal with respect to the time scale $\epsilon$ of $u^{\prime}\left(
x,t\right)  $. Otherwise, it increases with $\alpha$, the limit of $\frac
{\mu}{\epsilon}$. A large $k_{T}\left(  x\right)  $ depletes the effect, since
viscosity reduces the effects of inertia.}
\end{Remark}

\section*{Acknowledgements} 
This material is based upon work supported by the National Science
Foundation under Grant No. DMS-2424139, while the  authors were in
residence at the Simons Laufer Mathematical Sciences Institute in
Berkeley, California, during the Fall 2025 semester.


\begin{thebibliography}{99}

\bibitem {Bec}J. Bec, K. Gustavsson, B. Mehlig, {\em Statistical models for the
dynamics of heavy particles in turbulence}, Annu. Rev. Fluid Mech. 56 (2024), pp. 189--213.

\bibitem {Berselli}C. L. Berselli, T. Iliescu, J. W. Layton, {\sc Mathematics of
Large Eddy Simulation of Turbulent Flows}, Springer, 2006.

\bibitem {Bousinnesq}J. Boussinesq, {\em Essai sur la th\'{e}orie des eaux
courantes}, M\'{e}moires pr\'{e}sent\'{e}s par divers savants a l'Academie des
Sciences de l'Institut National de France, XXIII (1), (1877).


\bibitem{cerrai-09} S.~Cerrai, {\em A Khasminskii type averaging principle for stochastic reaction-diffusion equations},  Annals of Applied Probability 19 (2009), pp. 899--948.

\bibitem {Delillo}F.~De Lillo, M.~Cencini, S.~Musacchio, G.~Boffetta,
{\em Clustering and turbophoresis in a shear flow without walls}, Physics of Fluids
28 (2016).



\bibitem{fgp} F.~Flandoli, M.~Gubinelli, E.~Priola, {\em Well-posedness of the transport equation by stochastic
              perturbation}, Inventiones 180 (2010), pp. 1--53.


\bibitem {FlaHuang}F.~Flandoli, R.~Huang, {\em Coagulation dynamics under
environmental noise: Scaling limit to SPDE}, ALEA, Latin American Journal of Probability and Mathematical
              Statistics 19 (2022), pp. 1241--1292.

\bibitem {FlaPalmViv}F. Flandoli, M. Palmieri, M. Viviani, {\em The early stage of
the motion along the gradient of a concentrated vortex structure}, arXiv:2505.22700.

\bibitem{freidlin} M.~Freidlin, {\em  Some remarks on the Smoluchowski-Kramers approximation}, Journal of Statistical Physics 117 (2004), pp. 617-634.






\bibitem{hmdvw}S.~Hottovy, A.~McDaniel, G.~Volpe, J.~Wehr,
{\em The Smoluchowski-Kramers limit of stochastic differential equations with arbitrary state-dependent friction}, Communications in Mathematical Physics 336 (2015), pp. 1259--1283.

\bibitem{Wehr2} S.~Hottovy, G.~Volpe, J.~Wehr, {\em Noise-induced drift in stochastic differential equations
with arbitrary friction and diffusion
in the Smoluchowski-Kramers limit}, Journal of  Statistical Physics 146  (2012), pp. 762--773.

\bibitem {Johnson}P.~L.~Johnson, M.~Bassenne, P.~Moin, {\em Turbophoresis of small
inertial particles: theoretical considerations and application to
wall-modelled large-eddy simulations}, Journal of Fluid Mechanics 883 (2020).



\bibitem{pav-stu} R.~Kupferman, G.~A.~Pavliotis, A.~M.~Stuart, {\em It\^o versus Stratonovich white-noise limits for systems with inertia and colored multiplicative noise}, Physical Review E 70 (2004), 036120, 9 pp.



\bibitem {Kraichnan}R. H. Kraichnan, {\em Small-scale structure of a scalar field
convected by turbulence}, The Physics of Fluids 11 (1968), pp. 945--953.


\bibitem {Majda}A. J. Majda, I. Timofeyev, E. Vanden Eijnden, {\em A mathematical
framework for stochastic climate models}, Comm. Pure Appl. Math.,
54 (2001), pp. 891--974.


\bibitem{PS-2} G.~Pavliotis, A.~Stuart, {\em Analysis of white noise limits for stochastic systems with two fast relaxation times}, SIAM Multiscale Modelling and Simulation 4 (2005), pp. 1--35.


\bibitem {Reeks}M.~W.~Reeks, {\em The transport of discrete particles in
inhomogeneous turbulence}, Journal of Aerosol Science 14 (1983), pp. 729--739.










\end{thebibliography}
\end{document}